\newcommand\ackname{Acknowledgements}
  \newenvironment{acknowledgements}{%
      \titlepage
      \null\vfil
      \@beginparpenalty\@lowpenalty
      \begin{center}%
        \bfseries \ackname
        \@endparpenalty\@M
      \end{center}}%
     {\par\vfil\null\endtitlepage}
  \newenvironment{acknowledgements}{%
      \if@twocolumn
        \section*{\abstractname}%
      \else
        \small
        \begin{center}%
          {\bfseries \ackname\vspace{-.5em}\vspace{\z@}}%
        \end{center}%
        \quotation
      \fi}
      {\if@twocolumn\else\endquotation\fi}
\newcommand{\R}{\mathbb R}
\newcommand{\Qv}{\mathbb Q}
\newcommand{\Pv}{\mathbb P}
\newcommand{\E}{\mathbb E}
\newcommand{\Dv}{\mathbb D}
 \renewcommand{\headrulewidth}{0pt}
 \renewcommand{\footrulewidth}{0.5pt}
 \definecolor{myaqua}{rgb}{0.0,0.5,0.55}
 \definecolor{lightaqua}{rgb}{0.75,0.95,0.95}
\newtheorem{theorem}{Theorem}
\newtheorem{prop}[theorem]{Proposition}
\newtheorem{lem}{Lemma}
\newtheorem{coro}{Corollary}
\newtheorem{defn}{Definition}[section]
\newtheorem{rem}{Remark}[section]
\def\lin#1#2{\textcolor[rgb]{0.6,0.6,0.6}{\vspace*{#1mm} \hrule
   height 3 pt \vspace*{#2mm}}}
\def\bt{\begin{tabular}}
\def\et{\end{tabular}}
\def\and{\mbox{ and }}
\def\E{\mbox{\bf E}}
\def\1{{\bf 1}}
 \def\boxx#1#2#3#4#5{
 {\linethickness{#4pt}\put(#1,#5){\color{myaqua}{\line(1,0){#3}}}}
 \multiput(#1,#2)(0,#4){2}{\line(1,0){#3}}
 \multiput(#1,#2)(#3,0){2}{\line(0,1){#4}}
  }
\begin{document}


 $\mbox{ }$

 \vskip 12mm

{ 

{\noindent{\Large\bf\color{myaqua}
  On the study of processes of \texorpdfstring{$\Sigma(H)$}{sigma(H)} and \texorpdfstring{$\sum_{s}(H)$}{sigma(s)(H)} classes}} 
%
\\[6mm]
{\bf Fulgence EYI-OBIANG$^1$, Youssef OUKNINE$^2$, Octave MOUTSINGA$^3$}}
\\[2mm]
{ 
 $^1$Université Cady Ayyad de Marrakech and Université des Sciences et Techniques de Masuku 
 \\
Email: \href{mailto:feyiobiang@yahoo.fr}{\color{blue}{\underline{\smash{feyiobiang@yahoo.fr}}}}\\[1mm]
$^2$Université Cady Ayyad de Marrakech and Hassan II Academy of Sciences and Technologies Rabat\\
Email:
\href{mailto:ouknine@ucam.ac.ma}{\color{blue}{\underline{\smash{ouknine@ucam.ac.ma}}}}\\[1mm]
$^3$ Université des Sciences et Techniques de Masuku\\
\href{mailto:octavemoutsing-pro@yahoo.fr}{\color{blue}{\underline{\smash{octavemoutsing-pro@yahoo.fr}}}}\\[1mm]
\\[1mm]
\lin{5}{7}

 {  
 {\noindent{\large\bf\color{myaqua} Abstract}{\bf \\[3mm]
 \textup{
In papers by Yor, a remarkable class $(\Sigma)$ of submartingales is introduced, which, up to technicalities, are submartingales $(X_{t})_{t\geq0}$ whose increasing process is carried by the times $t$ such that $X_{t}=0$. These submartingales have several applications in stochastic analysis: for example, the resolution of Skorokhod embedding problem, the study of Brownian local times and the study of zeros of continuous martingales. The submartingales of class $(\Sigma)$ have been extensively studied in a series of articles by Nikeghbali (part of them in collaboration with Najnudel, some others with Cheridito and Platen). On the other hand, stochastic calculus has been extended to signed measures by Ruiz de Chavez \cite{chav} and Beghdadi-Sakrani \cite{sak}. In \cite{f}, the authors of the present paper have extended the notion of submartingales of class $(\Sigma)$ to the setting of Ruiz de Chavez \cite{chav} and Beghdadi-Sakrani \cite{sak}, giving two different classes of stochastic processes named classes $\sum(H)$ and $\sum_{s}(H)$ where from tools of the theory of stochastic calculus for signed measures, the authors provide general frameworks and methods for dealing with processes of these classes. In this work, we first give some formulas of multiplicative decomposition for processes of these classes. Afterward, we shall establish some representation results allowing to recover any process of one of these classes from its final value and the last time it visited the origin. 
 }}}
 \\[4mm]
 {\noindent{\large\bf\color{myaqua} Keywords}{\bf \\[3mm]
 Stochastic calculus for signed measures; zeros of continuous martingales; class $\sum$; class $\sum(H)$; class $\sum_{s}(H)$; last passage times
}}}
\lin{3}{1}

\renewcommand{\headrulewidth}{0.5pt}
\renewcommand{\footrulewidth}{0pt}

 \pagestyle{fancy}
 \fancyfoot{}
 \fancyhead{} 
 \fancyhf{}
 \fancyhead[RO]{\leavevmode \put(-90,0){\color{myaqua}F. EYI-OBIANG et al} \boxx{15}{-10}{10}{50}{15} }
 \fancyfoot[C]{\leavevmode
 \put(-2.5,-3){\color{myaqua}\thepage}}

 \renewcommand{\headrule}{\hbox to\headwidth{\color{myaqua}\leaders\hrule height \headrulewidth\hfill}}
\section{Introduction}

{ \fontfamily{times}\selectfont
 \noindent 

From stochastic calculus for signed measures, we have introduced  in \cite{f}, two new classes of stochastic processes: $\sum(H)$ and $\sum_{s}(H)$. The motivation came from the study of processes of the form:
\begin{equation}\label{eq0}
	X=N+A
\end{equation}
where $A$ is a non-decreasing and continuous stochastic process such that $dA_{t}$ is carried by the set of zeros of some stochastic process. The equation \eqref{eq0} has played a capital role in many probabilistic studies. For instance: the family of Azéma-Yor martingales, the resolution of Skorokhod embedding problem, the Skorokhod reflection equation, the study of Brownian local times and the study of zeros of continuous martingales \cite{1}. A large class containing several stochastic processes satisfying \eqref{eq0} is the class $\sum$. The class $\sum$ has been introduced by Yor \cite{y1} and studied in a series of articles \cite{nik}, \cite{pat}, \cite{mult}, \cite{naj}, \cite{naj1}, \cite{naj2} and \cite{naj3}. 

The interest of the present work lies in results established in \cite{mult} and \cite{pat}. In  \cite{mult}, the multiplicative decomposition formulas have been given for submartingales of $\sum$ class. In \cite{pat}, some representation results allowing to recover any process of $\sum$ class from its final value and the last time it visited the origin have been established. More precisely, these formulas are of the form:
\begin{equation}\label{rep}
	X_{t}=\E[X_{\infty}1_{\{L\leq t\}}|\mathcal{F}_{t}]
\end{equation}
where, $X$ is a process of $\sum$, $X_{\infty}=\lim_{t\to\infty}{X_{t}}$ and $L=\sup\{t\geq0; X_{t}=0\}$. These kind of results are useful. For instance, from these results, a general framework to study last passage times, suprema and drawdowns of class $\sum$ is given in \cite{pat}. Theses results have also played an important role in \cite{naj}, \cite{naj1}, \cite{naj2} and \cite{naj3} to establish a remarkable class of $\sigma-$ finite measures.

Inspired by these works, the aim of this paper is to establish multiplicative decomposition and representation  results for  $\sum(H)$ and $\sum_{s}(H)$  classes.

The paper is organized as follows: in Section 2, we shall give a brief summary of stochastic calculus for signed measures and definitions of classes $\sum$, $\sum(H)$ and $\sum_{s}(H)$. In Section 3, some results on multiplicative decomposition of  $\sum(H)$ and $\sum_{s}(H)$  classes are provided. Finally, in Section 4, we establish representation results allowing us to write the processes of $\sum(H)$ and $\sum_{s}(H)$ classes as in Equation \eqref{rep}. }

\section{Stochastic calculus for signed measures}

{ \fontfamily{times}\selectfont
 \noindent 
 We start by giving some notations that will be used throughout this paper. Consider a measure space $(\Omega, \mathcal{F}_{\infty}, \Qv)$, where $\Qv$ is a bounded signed measure. Let $\Pv$ be a probability measure on $\mathcal{F}_{\infty}$ such that $\Qv<<\Pv$. We shall always use the following notations:
\begin{itemize}
	\item $D_{t}=\frac{d\Qv|_{\mathcal{F}_{t}}}{d\Pv|_{\mathcal{F}_{t}}}$ where $\mathcal{F}$ is a right continuous filtration completed with respect to $\Pv$ such that $\mathcal{F}_{\infty}=\vee_{t}\mathcal{F}_{t}$. We shall consider that $D$ is continuous in this paper. Note also that $D$ is a uniformly integrable martingale (see Beghdadi-Sakrani \cite{sak}).
	\item $H=\{t: D_{t}=0\}$.
	\item $g=\sup{H}$. 
	\item $\overline{g}=0\vee g$. 
	\item $\gamma_{t}=0\vee\sup\{s\leq t, D_{s}=0\}$.
	\item If $X$ is an adapted process with respect to $(\mathcal{F}_{t})$, we shall denote $\widetilde{X}:=X_{.+\overline{g}}$.
	\item Throughout this paper, $\E$ denotes the expectation with respect to the probability measure $\Pv$.
\end{itemize}

Note that $g$ and $\overline{g}$ are not stopping times with respect to the filtration $(\mathcal{F}_{t})_{t\geq0}$. Then, we shall denote $(\mathcal{F}^{g}_{t})$, the smallest right continuous filtration containing $(\mathcal{F}_{t})$ for which $\overline{g}$ and $g$ are stopping times. Hence, the filtration $(\mathcal{F}^{g}_{\overline{g}+t})$ is well defined and will be denoted $(\mathcal{F}_{t+g})$.

The class of stochastic processes $\sum$ plays an important role in this work. We recall its definition in what follows.
\begin{defn}
We say that a stochastic process $X$ is of class $\sum$ if it decomposes as $X=N+A$, where
\begin{enumerate}
	\item $N$ is a càdlàg local martingale,
	\item $A$ is an adapted continuous finite variation process starting at 0,
	\item $\int_{0}^{t}{1_{\{X_{u}\neq0\}}dA_{u}}=0$ for all $t\geq0$.
\end{enumerate}
\end{defn}
\begin{defn}
A stochastic process $X$ is said of class $\Dv$ if \\$\{X_{\tau}:$ $\tau<\infty$ $is$ $a$ $stopping$ $time\}$ is uniformly integrable.
\end{defn}

Now, we shall recall some results of stochastic calculus for signed measures. P.A. Meyer suggested to use signed measures in order to generalize the Paul Lévy's Theorem. This generalization was established by Ruiz de Chavez in $\cite{chav}$ and completed by Beghdadi-Sakrani in \cite{sak}. The basis of stochastic calculus for signed measure theory have been established in both papers cited above. Note that the authors of \cite{chav} and \cite{sak} did not use the same definition of martingale with respect to a signed measure. A martingale with respect to a signed measure was defined in \cite{chav} as follows.
\begin{defn}\label{d1}
We consider a measure space $(\Omega,\mathcal{F}_{\infty},\Qv)$, where $\Qv$ is a bounded signed measure.
Let $\Pv$ be a probability measure on $\mathcal{F}_{\infty}$ such that $\Qv<<\Pv$. $(\mathcal{F}_{t})_{t\geq0}$ is a right continuous filtration, completed with respect to $\Pv$ such that $\mathcal{F}_{\infty}=\vee_{t}\mathcal{F}_{t}$ and $D_{t}=\frac{d\Qv|_{\mathcal{F}_{t}}}{d\Pv|_{\mathcal{F}_{t}}}$. We say that a $(\mathcal{F}_{t})_{t\geq0}-$  adapted process $X$ is a $(\Qv,\Pv)-$ martingale if:
\begin{enumerate}
	\item $X$ is a $\Pv-$ semi martingale.
	\item $XD$ is a $\Pv-$ martingale.
\end{enumerate}
$X$ is said $(\Qv,\Pv)-$ local martingale (or uniformly integrable $(\Qv,\Pv)-$ martingale) if $DX$ is a $\Pv-$ local martingale (or uniformly integrable $\Pv-$ martingale ).
\end{defn}
 
Note that it is only  due to assumption 1) that we can use usual stochastic calculus on the class of processes defined above. The class of stochastic processes $\sum(H)$ was defined from Definition \ref{d1} as follows:
\begin{defn}
Let $X$ be a non-negative $\Pv-$ semimartingale which decomposes as:
$$X_{t}=M_{t}+A_{t}.$$
We say that $X$ is of $\sum(H)$ class if:
\begin{enumerate}
	\item $M$ is a càdlàg $(\Qv,\Pv)-$ local martingale, with $M_{0}=0$;
	\item $A$ is a continuous nondecreasing process, with $A_{0}=0$;
	\item the measure $(dA_{t})$ is carried by the set $\{t: X_{t}=0\}\cup H$.
\end{enumerate}
\end{defn}

Now, we recall the definition of martingale with respect to a signed measure used in \cite{sak}. In the next definition, we shall take $\Pv =|\Qv|$, where $\Qv$ is a signed measure such that $|\Qv|(\Omega)=1$.
\begin{defn} 
Let $X$ be an adapted process with respect to the filtration $\mathcal{F}$.
\begin{enumerate}
	\item $X$ is called a $\Qv-$ martingale if: $\E[|X_{t}|]<+\infty$, $\forall t\geq0$ and $\Qv(X_{T})=\Qv(X_{0})$ for any bounded stopping time $T$.
	\item $X$ is called uniformly integrable $\Qv-$ martingale if $XD$ is  a $\Pv-$  martingale which is uniformly integrable .
	\item $X$ is a $\Qv-$  local martingale if $DX$ is a $\Pv-$  local martingale.
\end{enumerate} 
\end{defn}

Remark that we can not apply stochastic calculus on $\Qv-$   martingales, because a $\Qv-$   martingale is not necessarily a $\Pv-$ semimartingale. But, there exists a relation between the two above definitions of a martingale with respect to a signed measure. It is given in the next proposition.
\begin{prop}\label{compa}
Let $\Pv^{'}$ be a probability measure on $\mathcal{F}_{\infty}$ such that $\Qv<<\Pv^{'}$. If, $X$ is a $(\Qv,\Pv^{'})-$ martingale, then $X$ is a $\Qv-$ martingale.
\end{prop}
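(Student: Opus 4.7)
The plan is to verify the two defining conditions of a $\Qv$-martingale directly from the $(\Qv,\Pv')$-martingale hypothesis, using as the bridge the Radon--Nikodym density
\[
D'_t=\frac{d\Qv|_{\mathcal{F}_t}}{d\Pv'|_{\mathcal{F}_t}},
\]
which is a $\Pv'$-martingale with $D'_\infty=d\Qv/d\Pv'$. The central observation is that $\Qv$-expectations can be rewritten as $\Pv'$-expectations weighted by $D'_\infty$, while the hypothesis that $XD'$ is a (honest) $\Pv'$-martingale allows an optional sampling argument at bounded stopping times.

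The main step is the identity $\Qv(X_T)=\Qv(X_0)$ for every bounded stopping time $T$. First I would translate the $\Qv$-integral into a $\Pv'$-integral: since $X_T\in\mathcal{F}_T$, the tower property combined with optional sampling for the $\Pv'$-martingale $D'$ gives
\[
\Qv(X_T)=\E_{\Pv'}[X_T D'_\infty]=\E_{\Pv'}\bigl[X_T\,\E_{\Pv'}[D'_\infty\mid\mathcal{F}_T]\bigr]=\E_{\Pv'}[X_T D'_T].
\]
Then, invoking the hypothesis that $XD'$ is a genuine $\Pv'$-martingale, the optional sampling theorem applied at the bounded time $T$ yields $\E_{\Pv'}[X_T D'_T]=\E_{\Pv'}[X_0 D'_0]=\Qv(X_0)$, which is the required conservation property.

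For the integrability condition $\E[|X_t|]<\infty$ (computed under $\Pv=|\Qv|$), I would use that $\Qv\ll\Pv'$ implies $|\Qv|\ll\Pv'$ with density $|D'_\infty|$, so that
\[
\E_{|\Qv|}[|X_t|]=\E_{\Pv'}\bigl[|X_t|\,|D'_\infty|\bigr],
\]
and then try to control this by the $\Pv'$-integrability of $X_tD'_t$ coming from the martingale property of $XD'$. This transfer is the step I expect to be the main obstacle: the naive conditioning bound $\E_{\Pv'}[|D'_\infty|\mid\mathcal{F}_t]\geq|D'_t|$ goes the wrong way, so one has to exploit additional structure, for instance uniform integrability of $D'$ together with a localisation of $X$ through stopping times where $XD'$ is bounded, or simply observe that the definition of $\Qv$-martingale in the Beghdadi--Sakrani sense is essentially testing against bounded stopping times, so the integrability reduces to checking $\E_{\Pv'}[|X_{t\wedge\tau_n}|\,|D'_\infty|]<\infty$ along a reducing sequence $\tau_n$. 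Once this is settled, the optional sampling chain in the previous paragraph delivers the proposition.
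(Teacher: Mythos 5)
The paper does not actually prove this proposition: its ``proof'' is the citation \emph{See Proposition 1.1 of \cite{sak}}, so there is no in-text argument to match yours against. Taken on its own terms, your optional-sampling chain for the conservation property is the natural one, but the gap you flag at the end is genuine and, moreover, is not confined to the integrability condition. The pull-out step $\E_{\Pv'}[X_T D'_\infty]=\E_{\Pv'}\bigl[X_T\,\E_{\Pv'}[D'_\infty\mid\mathcal{F}_T]\bigr]$ is only legitimate once $X_T D'_\infty\in L^1(\Pv')$, i.e.\ once $X_T\in L^1(|\Qv|)$ --- which is precisely the unproved integrability statement (and at stopping times, not just fixed times). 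Until that is settled, $\Qv(X_T)$ is not even known to be defined, so the identity $\Qv(X_T)=\Qv(X_0)$ is not established either.

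On the integrability itself, your diagnosis that the inequality goes the wrong way is correct, but none of the escape routes you sketch closes it. Writing $R_t=\E_{\Pv'}[\,|D'_\infty|\mid\mathcal{F}_t]$ for the density of $|\Qv|$ on $\mathcal{F}_t$, Tanaka's formula gives $R_t=|D'_t|+\E_{\Pv'}[L_\infty-L_t\mid\mathcal{F}_t]$, with $L$ the local time of $D'$ at $0$, so that $\E_{|\Qv|}[|X_t|]=\E_{\Pv'}[|X_t|\,|D'_t|]+\E_{\Pv'}[|X_t|(R_t-|D'_t|)]$. The first term is finite because $XD'$ is a $\Pv'$-martingale, but the hypothesis gives no control on $|X_t|$ on the event where $|D'_t|$ is small while $R_t$ is not (heuristically, $X$ can be of order $1/|D'|$ near the zeros of $D'$ without disturbing $XD'$), so the second term is not controlled. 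Uniform integrability of $D'$ does not help here, and localisation along a reducing sequence $\tau_n$ cannot either, since the definition of a $\Qv$-martingale demands $\E[|X_t|]<\infty$ for every fixed $t$, not along stopping times. As written, the proposal therefore does not deliver the proposition: one must either add a hypothesis guaranteeing $X_tD'_\infty\in L^1(\Pv')$ or reproduce the argument of \cite{sak} itself.
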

\begin{proof}
See Proposition 1.1 of \cite{sak}. 
\end{proof}

The theory of stochastic calculus for signed measures which allows us to deal with $\Qv-$ martingales was established in \cite{sak}. Now, we recall some results of this theory. We begin by quoting the following proposition of  Azema and Yor $\cite{1}$.
\begin{prop}\label{14}
Let $(V_{t})_{t\geq0}$ be a $(\mathcal{F}_{g+t})_{t\geq0}-$   optional process. There exists a unique $(\mathcal{F}_{t})_{t\geq0}-$   optional process $(U_{t})_{t\geq0}$  which vanishes on $H$ such that $\forall t\geq0$, $U_{\overline{g}+t}=V_{t}$ and $U_{0}=V_{0}$ on $\{\overline{g}=0\}$. That defines a function
$\rho:V\longmapsto U$.\\
$\rho$ is linear, nonnegative and preserves products.
\end{prop}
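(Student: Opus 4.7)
The plan is a functional monotone class argument, starting from indicators of stochastic intervals that generate the $(\mathcal{F}_{g+t})$-optional $\sigma$-field, after which the three algebraic properties can be read off from the explicit formula on generators.

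For uniqueness, if $U$ and $U'$ both satisfy the listed conditions then $W:=U-U'$ is $(\mathcal{F}_{t})$-optional, vanishes on $H$, vanishes on $[\overline{g},+\infty)$ by the shift relation $W_{\overline{g}+t}=V_{t}-V_{t}=0$, and vanishes at time $0$ on $\{\overline{g}=0\}$. Standard theory of ends of optional sets (see \cite{1}) then shows that an $(\mathcal{F}_{t})$-optional process vanishing on $H\cup[\overline{g},+\infty)$ must vanish identically, so $U=U'$ up to indistinguishability.

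For existence, the $(\mathcal{F}_{g+t})$-optional $\sigma$-field is generated by indicator processes $V^{\tau}_{t}=\1_{\{\tau\leq t\}}$ with $\tau$ an $(\mathcal{F}_{g+t})$-stopping time. By the very definition $(\mathcal{F}_{g+t})=(\mathcal{F}^{g}_{\overline{g}+t})$, the random time $\sigma:=\overline{g}+\tau$ is an $(\mathcal{F}^{g}_{t})$-stopping time dominating $\overline{g}$. Since $D$ is continuous, $H$ is closed and $\overline{g}$ is the end of the optional set $H$, hence an honest time with respect to $(\mathcal{F}_{t})$. The classical result on honest times then provides an $(\mathcal{F}_{t})$-optional counterpart once one restricts to $[\overline{g},+\infty)$: I would set $U^{\tau}_{s}:=\1_{\{\sigma\leq s\}}\1_{\{s\geq\overline{g}\}}$, which is $(\mathcal{F}_{t})$-optional, vanishes on $H\subseteq[0,\overline{g}]$, and satisfies $U^{\tau}_{\overline{g}+t}=V^{\tau}_{t}$. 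Extending $V^{\tau}\mapsto U^{\tau}$ by linearity and monotone convergence defines $\rho$ on all $(\mathcal{F}_{g+t})$-optional processes.

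Linearity and nonnegativity of $\rho$ are immediate from the limiting construction. For product-preservation, the monotone class theorem reduces matters to generators, where $V^{\tau}V^{\tau'}=V^{\tau\vee\tau'}$ and the corresponding identity at the $U$-level holds because the cut-off indicator $\1_{\{s\geq\overline{g}\}}$ is idempotent and $\sigma\vee\sigma'=\overline{g}+(\tau\vee\tau')$. The main obstacle, and the reason why $U$ is forced to vanish on $H$, is precisely this measurability step: the shifted time $\sigma$ is a priori only a stopping time in the enlarged filtration $(\mathcal{F}^{g}_{t})$, and it is only by killing the process on $H$ (equivalently, by restricting to $[\overline{g},+\infty)$) that one descends back into the original optional $\sigma$-field, invoking the key property that $\overline{g}$ is the end of an optional set.
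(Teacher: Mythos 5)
The paper offers no proof of this proposition: it is quoted from Az\'ema and Yor, and the ``proof'' in the text is just the citation ``(See Azema and Yor \cite{1})''. So you are not competing with an argument in the paper, but your existence step contains a genuine error. You assert that $U^{\tau}_{s}:=\1_{\{\sigma\leq s\}}\1_{\{s\geq\overline{g}\}}$, with $\sigma=\overline{g}+\tau$, is $(\mathcal{F}_{t})$-optional. It is not. Since $\sigma\geq\overline{g}$, the second indicator is redundant, so $U^{\tau}=\1_{[\sigma,\infty)}$, and $\sigma$ is only a stopping time of the enlarged filtration $(\mathcal{F}^{g}_{t})$; its stochastic interval is $\mathcal{F}^{g}$-optional, not $\mathcal{F}$-optional. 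The case $\tau=0$ already defeats the construction: there $U^{\tau}=\1_{[\overline{g},\infty)}$, which is not $(\mathcal{F}_{t})$-optional (the paper itself stresses that $\overline{g}$ is not an $(\mathcal{F}_{t})$-stopping time) and does not vanish on $H$, since $g\in H$ whenever $H\neq\emptyset$ is closed. Moreover, ``killing the process on $H$'' means multiplying by $\1_{H^{c}}(s)=\1_{\{D_{s}\neq0\}}$, not by $\1_{\{s\geq\overline{g}\}}$; and even $\1_{\{\sigma\leq s\}}\1_{\{D_{s}\neq0\}}$ is wrong, because for $V\equiv1$ the unique admissible $U$ is $\1_{\{D_{s}\neq0\}}$ (the identity $U_{\overline{g}+t}=V_{t}$ must be read for $t>0$, else it conflicts with $U_{\overline{g}}=0$), which is nonzero on the excursion intervals of $D$ \emph{before} $g$, where your formula vanishes. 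The missing idea is precisely the substance of the Az\'ema--Yor result: a representation lemma for the progressive enlargement at the honest time $g$, asserting that every $(\mathcal{F}_{g+t})$-optional $V$ satisfies $V_{t}=v_{\overline{g}+t}$ for some $(\mathcal{F}_{t})$-optional process $v$; one then puts $U=v\,\1_{H^{c}}$ and checks independence of the representative. Without that lemma the construction does not get off the ground.

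Your uniqueness paragraph is sound in outline, but be aware that ``an $(\mathcal{F}_{t})$-optional process vanishing on $H\cup[\overline{g},\infty)$ is evanescent'' is itself the nontrivial input: by the optional section theorem it reduces to showing that no stopping time $T$ can satisfy $T\leq g$ almost surely on a non-null part of $\{T<\infty,\ D_{T}\neq0\}$, which follows from optional stopping applied to the uniformly integrable martingale $D$ (from $D_{T}=a\neq0$ there is positive conditional probability that $D$ never returns to $0$, i.e.\ that $T>g$). Citing \cite{1} for this is legitimate, but it belongs to the same circle of ideas you would need to repair the existence half.
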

\begin{proof}
(See Azema and Yor \cite{1})
\end{proof}

Now, we shall recall the definition of stochastic integral with respect to a signed measure $\Qv$.
\begin{defn}
Let $X$ be a uniformly integrable $\Qv-$   martingale (resp. finite variation under $\Qv$) and h, a progessive process such that:\\
$\int^{t}_{0}h^{2}_{\overline{g}+s}d\langle X_{\overline{g}+.}\rangle_{s}<+\infty$, $\forall t\geq0$ (resp. $\int^{t}_{0}|h_{\overline{g}+s}||d X_{\overline{g}+s}|<+\infty$). We define the stochastic integral of $h$ with respect to $X$ under the signed measure $\Qv $ by:
\begin{equation}
_\Qv\int^{t}_{0}h_{s}dX_{s}=\rho\left(\int_{0}^{.}h_{\overline{g}+s}d\widetilde{X}_{s}\right)_{t}.
\end{equation}
\end{defn}

\begin{rem}
A process $X$ is said nondecreasing (resp. finite variation) under $\Qv$ when the process $\widetilde{X}_{\cdot}=X_{\cdot+\overline{g}}$ is non-decreasing (resp. finite variation) under $\Pv$.
\end{rem}

We note $[X]^{\Qv}=\rho([\widetilde{X}])_{.}$.
\begin{theorem}
If $X$ is a uniformly integrable $\Qv-$ martingale, then $[X]^{\Qv}$ is the unique process adapted, right continuous, nondecreasing on $[\overline{g}, +\infty[$ and null on $H$ such that $X^{2}-[X]^{\Qv}$ is a $\Qv-$   local martingale.
\end{theorem}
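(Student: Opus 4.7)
The plan is to reduce the problem to classical quadratic variation under $\Pv$ for the shifted process $\widetilde{X}$, and then transfer everything back through the correspondence $\rho$ of Proposition \ref{14}. Since $X$ is a uniformly integrable $\Qv$-martingale and $D$ vanishes on $H$, the theory of Beghdadi-Sakrani \cite{sak} tells us that $\widetilde{X} = X_{\cdot+\overline{g}}$ is a uniformly integrable $\Pv$-martingale relative to $(\mathcal{F}_{t+g})$. The classical Doob--Meyer theory then delivers the quadratic variation $[\widetilde{X}]$ as the unique adapted, right-continuous, nondecreasing process starting at $0$ such that $\widetilde{X}^2 - [\widetilde{X}]$ is a $\Pv$-local martingale. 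The definition $[X]^{\Qv} = \rho([\widetilde{X}])$ recorded just before the statement makes existence essentially a matter of reading off properties of $\rho$.

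For the existence part, I would verify the four requested properties in turn. That $[X]^{\Qv}$ is $(\mathcal{F}_t)$-optional, hence adapted, and null on $H$ is built into the definition of $\rho$. Right-continuity is inherited via the identity $[X]^{\Qv}_{\overline{g}+t} = [\widetilde{X}]_t$. Nondecreasingness on $[\overline{g}, +\infty[$ follows from the fact that $\rho$ is nonnegative and product-preserving (Proposition \ref{14}), which converts the monotonicity of $[\widetilde{X}]$ on $[0, +\infty[$ into that of $[X]^{\Qv}$ on $[\overline{g}, +\infty[$. The more delicate point is to show that $X^2 - [X]^{\Qv}$ is a $\Qv$-local martingale: here I would apply $\rho$ to the identity involving $\widetilde{X}^2 - [\widetilde{X}]$, use $\rho(\widetilde{X}^2) = \rho(\widetilde{X})^2$, and then invoke Definition \ref{d1} after multiplying by $D$. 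The discrepancy between $X$ and $\rho(\widetilde{X})$ on $[0, \overline{g}[$ is harmless because it is carried by $\{D=0\} = H$, so it disappears in every $\Qv$-related calculation.

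For uniqueness, suppose $Y$ is another process satisfying the stated properties. Its shift $\widetilde{Y}$ is then adapted, right-continuous and nondecreasing on $[0, +\infty[$, starts at $0$ (since $\overline{g} \in H$ by continuity of $D$ and $Y$ vanishes on $H$), and $\widetilde{X}^2 - \widetilde{Y}$ is a $\Pv$-local martingale; the classical uniqueness of the quadratic variation forces $\widetilde{Y} = [\widetilde{X}]$, and the uniqueness in Proposition \ref{14} then yields $Y = \rho(\widetilde{Y}) = [X]^{\Qv}$. The principal obstacle in the whole argument is the verification of $\Qv$-local martingality in the second paragraph, where one must carefully juggle the three different behaviours on $H$, on $[0, \overline{g}[ \setminus H$, and on $[\overline{g}, +\infty[$; the key insight that makes it work is that $\Qv$-based notions are blind to what happens on $\{D = 0\}$.
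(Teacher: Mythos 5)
The paper does not actually prove this theorem: it is recalled from Beghdadi-Sakrani with the proof deferred to \cite{sak}, so there is no in-paper argument to measure yours against. Your route (shift by $\overline{g}$, take the classical bracket of $\widetilde{X}$, transfer back through $\rho$) is precisely the mechanism that the cited reference and this paper's own later proofs (Theorem \ref{qt}, Lemma \ref{lem}, Theorem \ref{0.1}) rely on, and the outline is sound. One small correction at the start: the Quotient Theorem makes $\widetilde{X}$ a uniformly integrable martingale for $(\mathcal{F}_{t+g})$ under the conditioned measure $\Pv'=\frac{|D_{\infty}|}{\E|D_{\infty}|}\Pv$, not under $\Pv$ itself; this is harmless for the bracket, which is a pathwise object, but it is under $\Pv'$ that $\widetilde{X}^{2}-[\widetilde{X}]$ is a local martingale, and it is the converse half of the correspondence (Proposition 2.3 of \cite{sak}, invoked identically in the proof of Theorem \ref{qt}) that converts this into ``$D\rho(\widetilde{X}^{2}-[\widetilde{X}])$ is a $\Pv$-local martingale.''

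The step you rightly single out as the principal obstacle is also the one you leave unjustified. The set $[0,\overline{g}[$ is emphatically not contained in $H$: it contains the excursion intervals of $D$ away from zero before its last zero, where $D\neq 0$, so the bare assertion that the discrepancy between $X$ and $\rho(\widetilde{X})$ ``is carried by $\{D=0\}$'' is exactly what has to be proved, not a consequence of $\Qv$ being blind to $\{D=0\}$. The claim is nevertheless true, and the missing argument is short: $DX$ is optional and vanishes on $H$, so by the uniqueness and multiplicativity in Proposition \ref{14}, $DX=\rho(\widetilde{D}\widetilde{X})=\rho(\widetilde{D})\rho(\widetilde{X})=D\rho(\widetilde{X})$, whence $X=\rho(\widetilde{X})$ on $\{D\neq0\}$ and therefore $DX^{2}=D\rho(\widetilde{X})^{2}=D\rho(\widetilde{X}^{2})$ everywhere. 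With that line inserted, your existence argument closes. Finally, in the uniqueness step, ``classical uniqueness of the quadratic variation'' among merely adapted increasing processes fails for discontinuous martingales (both $[\widetilde{X}]$ and $\langle\widetilde{X}\rangle$ compensate $\widetilde{X}^{2}$); you need $\widetilde{X}$ continuous, or predictability of the compensator, for that appeal to be legitimate --- a caveat the quoted statement itself glosses over.
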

\begin{proof}
(See S.Beghdadi-Sakrani \cite{sak})
\end{proof}

Now, we recall the Itô's Theorem for signed measures. 
\begin{theorem}
Let $X:=(X^{1},\ldots,X^{d})$ be a vector of $d$ right continuous $\Qv-$  semimartingales and $F\in$ $\mathcal{C}^{2}(\R^{d},\R)$. Then $F(X)$ is a right continuous $\Qv-$  semimartingale and for any $t\geq0$,
\begin{equation}
F(X_{t})=F(X_{\gamma_{t}})+\sum_{i}{_\Qv\int^{t}_{0}}\frac{\partial F}{\partial x_{i}}(X_{s})dX^{i}_{s}+\frac{1}{2}\sum_{i,j}{_\Qv\int^{t}_{0}}\frac{\partial^{2} F}{\partial x_{i}\partial x_{j}}(X_{s})d[X^{i},X^{j}]^{\Qv}_{s}
\end{equation}
\end{theorem}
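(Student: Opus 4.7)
The plan is to reduce the $\Qv$-It\^o formula to the classical $\Pv$-It\^o formula applied to the shifted vector $\widetilde{X}$, and then transport the resulting identity back to the original time scale by the linearity of the Azema-Yor map $\rho$ together with the very definitions of the $\Qv$-stochastic integral and of $[\cdot,\cdot]^{\Qv}$.

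First I would dispose of the semimartingale claim. Since each $X^i$ is a $\Qv$-semimartingale, the shifted process $\widetilde{X}^i = X^i_{\cdot+\overline{g}}$ is a $\Pv$-semimartingale; the classical It\^o formula applied to $F\in\mathcal{C}^2(\R^d,\R)$ then shows that $F(\widetilde{X}) = \widetilde{F(X)}$ is itself a $\Pv$-semimartingale, which is by definition what it means for $F(X)$ to be a $\Qv$-semimartingale. The same application of classical It\^o yields
$$
F(\widetilde{X}_t) - F(\widetilde{X}_0) = \sum_{i}\int_0^t \frac{\partial F}{\partial x_i}(\widetilde{X}_s)\, d\widetilde{X}^i_s + \frac{1}{2}\sum_{i,j}\int_0^t \frac{\partial^2 F}{\partial x_i\partial x_j}(\widetilde{X}_s)\, d[\widetilde{X}^i,\widetilde{X}^j]_s,
$$
to which I would apply $\rho$ termwise. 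On the right-hand side, the $\rho$-image of each $\Pv$-It\^o integral is the corresponding $\Qv$-integral ${_\Qv\int^{t}_{0}}(\partial F/\partial x_i)(X_s)\, dX^i_s$ by the very definition recalled above, and $\rho([\widetilde{X}^i,\widetilde{X}^j]) = [X^i,X^j]^{\Qv}$ handles the quadratic-variation term, so the right-hand side matches the target formula exactly.

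The real (and essentially only) work lies in identifying $\rho\bigl(F(\widetilde{X}) - F(\widetilde{X}_0)\bigr)_t$ with $F(X_t) - F(X_{\gamma_t})$. For this I would appeal to the uniqueness statement in the Azema-Yor proposition: setting $Y_t := F(X_t) - F(X_{\gamma_t})$, I would verify that $Y$ is $(\mathcal{F}_t)$-optional (being a right-continuous function of $X_t$ and of the $\mathcal{F}_t$-measurable time $\gamma_t$), that $Y$ vanishes on $H$ (since $t\in H$ forces $\gamma_t = t$), that $Y_{\overline{g}+t} = F(\widetilde{X}_t) - F(\widetilde{X}_0)$ for every $t\geq 0$, and that $Y_0 = 0$ on $\{\overline{g}=0\}$. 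The main obstacle I anticipate is the third check, which hinges on establishing $\gamma_{\overline{g}+t} = \overline{g}$ for all $t\geq 0$: when $H\neq\emptyset$ this uses $\overline{g} = \sup H$ together with continuity of $D$ (so that $\overline{g}\in H$ while no $s>\overline{g}$ lies in $H$), while when $H=\emptyset$ one has simply $\overline{g} = 0 = \gamma_t$. Once this identification is in hand, the $\Qv$-It\^o formula is obtained by equating the $\rho$-images of the two sides of the classical It\^o formula above.
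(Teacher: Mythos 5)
This theorem is one of the recalled results from Beghdadi--Sakrani: the paper gives no argument of its own, only the citation ``(See Beghdadi-Sakrani \cite{sak})'', so there is no in-paper proof to match yours against. That said, your reconstruction is sound and follows exactly the template the paper uses for every result it \emph{does} prove (Theorem \ref{qt}, Theorem \ref{0.2}, Corollary \ref{***}): shift by $\overline{g}$, apply the classical $\Pv$-statement to $\widetilde{X}$, and pull back through $\rho$ using its linearity, the definition ${}_\Qv\int_0^t h_s\,dX_s=\rho(\int_0^\cdot h_{\overline{g}+s}\,d\widetilde{X}_s)_t$, and $[X]^{\Qv}=\rho([\widetilde{X}])$. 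Your checks for identifying $\rho\bigl(F(\widetilde{X})-F(\widetilde{X}_0)\bigr)_t$ with $F(X_t)-F(X_{\gamma_t})$ --- vanishing on $H$ because $t\in H$ forces $\gamma_t=t$, and $\gamma_{\overline{g}+t}=\overline{g}$ by closedness of $H$ under the continuity of $D$ --- are the right ones. The only point I would flag is that the uniqueness clause of Proposition \ref{14} as stated literally constrains $U$ only on $H\cup[\overline{g},\infty)$, so the identification of the two sides on $[0,\overline{g})\setminus H$ rests on the finer structure of the Azema--Yor map rather than on bare uniqueness; but this is precisely the convention the paper itself invokes repeatedly (e.g.\ the step $\rho(f(\overline{K}_{\cdot+\overline{g}})X_{\cdot+\overline{g}})_t=f(\overline{K}_t)X_t$ in Theorem \ref{qt}), so you are working consistently within the paper's framework.
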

\begin{proof}
(See Beghdadi-Sakrani \cite{sak})
\end{proof}

In what follows, we recall Tanaka's formulas for signed measures that we established in \cite{f}. 
\begin{theorem}\label{ito}
Let X be a right continuous $\Qv-$ semimartingal. For all real $a$, we have:
\begin{equation} \label{eq1}
|X_{t}-a|=|X_{\gamma_{t}}-a|+_{\Qv}\int^{t}_{0}{\rm sgn}(X_{s}-a)dX_{s}+_{\Qv}L_{t}^{a}(X)
\end{equation}

\begin{equation} \label{eq2}
(X_{t}-a)^{+}=(X_{\gamma_{t}}-a)^{+}+_{\Qv}\int^{t}_{0}1_{\{X_{s}>a\}}dX_{s}+\frac{1}{2}{_{\Qv}L_{t}^{a}(X)}
\end{equation}

\begin{equation} \label{eq3}
(X_{t}-a)^{-}=(X_{\gamma_{t}}-a)^{-}-_{\Qv}\int^{t}_{0}1_{\{X_{s}\leq a\}}dX_{s}+\frac{1}{2}{_{\Qv}L_{t}^{a}(X)}
\end{equation}
where ${_{\Qv}L_{t}^{a}(X)}=\rho \big(L_{.}^{a}(X_{.+\overline{g}})\big)_{t}$ and $(L_{t}^{a}(X_{.+\overline{g}}))_{t\geq0}$ is the classical semimartingale local time of $(X_{t+\overline{g}})_{t\geq0}$.
\end{theorem}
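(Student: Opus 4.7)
The plan is to reduce each of the three identities to the classical Tanaka formula applied to the shifted $\Pv$-semimartingale $\widetilde{X}:=X_{\cdot+\overline{g}}$, and then to translate the resulting identities back into the $\Qv$-framework via the operator $\rho$ of Proposition \ref{14}. The guiding analogy is the It\^o formula for signed measures stated just above: there, $F(X_t)-F(X_{\gamma_t})$ plays the role that $F(X_t)-F(X_0)$ plays in the classical It\^o formula, and exactly the same substitution of $X_{\gamma_t}$ for $X_0$ is what will appear on the left-hand side of \eqref{eq1}--\eqref{eq3}.

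First I would observe that, since $X$ is a right-continuous $\Qv$-semimartingale, $\widetilde{X}$ is by construction a right-continuous $\Pv$-semimartingale in the shifted filtration $(\mathcal{F}_{t+g})$. The classical Tanaka formula under $\Pv$ therefore yields
\begin{equation*}
|\widetilde{X}_s-a|=|\widetilde{X}_0-a|+\int_0^s \mathrm{sgn}(\widetilde{X}_u-a)\,d\widetilde{X}_u+L_s^a(\widetilde{X}),
\end{equation*}
together with analogous decompositions of $(\widetilde{X}_s-a)^+$ and $(\widetilde{X}_s-a)^-$. I would then apply $\rho$ term by term, using its linearity from Proposition \ref{14} together with the defining identities ${_{\Qv}\!\int_0^t h_s\,dX_s}=\rho\bigl(\int_0^{\cdot} h_{\overline{g}+u}\,d\widetilde{X}_u\bigr)_t$ and ${_{\Qv}L_t^a(X)}=\rho\bigl(L_{\cdot}^a(\widetilde{X})\bigr)_t$. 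Because $\mathrm{sgn}(X_{\overline{g}+u}-a)=\mathrm{sgn}(\widetilde{X}_u-a)$ and $1_{\{X_{\overline{g}+u}>a\}}=1_{\{\widetilde{X}_u>a\}}$, the stochastic-integral and local-time terms transform immediately into their $\Qv$-counterparts. To identify the remaining term, I would use that for $t\geq\overline{g}$ we have $X_t=\widetilde{X}_{t-\overline{g}}$ and, since $D$ has no zero on $(\overline{g},\infty)$, $\gamma_t=\overline{g}$, so that $X_{\gamma_t}=X_{\overline{g}}=\widetilde{X}_0$. Reading the classical Tanaka identity at time $s=t-\overline{g}$ then produces exactly \eqref{eq1}, and the same argument yields \eqref{eq2} and \eqref{eq3}.

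The hard part will be the careful handling of $\rho$ on the various terms and a clean interpretation of the statement on the excursion intervals of $D$ that sit strictly before $\overline{g}$: on such intervals one must verify that $\gamma_t$ selects the correct reference point so that both sides of \eqref{eq1}--\eqref{eq3} agree with the value produced by $\rho$, exactly as happens in the It\^o formula for signed measures recalled above. As a secondary check, I would use the elementary identities $x=x^+-x^-$ and $|x|=x^++x^-$ to verify that the three formulas are mutually consistent, which reduces the genuine new step to a single one of the three decompositions.
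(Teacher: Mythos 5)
The paper does not actually prove Theorem \ref{ito}: it is quoted from \cite{f} and the ``proof'' is just the citation. Your reduction---apply the classical Tanaka formula to the shifted $\Pv$-semimartingale $\widetilde{X}=X_{\cdot+\overline{g}}$ in the filtration $(\mathcal{F}_{t+g})$ and transport the identity back with $\rho$---is the natural argument and is certainly the one carried out in \cite{f}; it is the exact analogue of how the It\^o formula for signed measures is obtained in \cite{sak}. So the approach is right, and your consistency check via $x=x^{+}-x^{-}$, $|x|=x^{+}+x^{-}$ correctly reduces the work to one of the three identities.

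One step you only gesture at needs to be made precise, because ``apply $\rho$ term by term'' is not literally legitimate for the two non-integral terms. The process $\rho\bigl(|\widetilde{X}_{\cdot}-a|\bigr)$ is by construction null on $H$, whereas $|X_{t}-a|$ is not, so $\rho\bigl(|\widetilde{X}_{\cdot}-a|\bigr)_{t}\neq|X_{t}-a|$ in general (and likewise for the constant term $|\widetilde{X}_{0}-a|$). The correct move is to apply $\rho$ to the \emph{difference} $|\widetilde{X}_{\cdot}-a|-|\widetilde{X}_{0}-a|=\int_{0}^{\cdot}{\rm sgn}(\widetilde{X}_{u}-a)\,d\widetilde{X}_{u}+L_{\cdot}^{a}(\widetilde{X})$ and then invoke the uniqueness clause of Proposition \ref{14}: the candidate $|X_{t}-a|-|X_{\gamma_{t}}-a|$ is optional, vanishes on $H$ because $\gamma_{t}=t$ there, and coincides with the shifted difference on $[\overline{g},\infty)$ because $\gamma_{\overline{g}+s}=\overline{g}$; hence it \emph{is} $\rho\bigl(|\widetilde{X}_{\cdot}-a|-|\widetilde{X}_{0}-a|\bigr)_{t}$, which yields \eqref{eq1}. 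This is exactly the ``excursion intervals before $\overline{g}$'' issue you flag, and uniqueness in Proposition \ref{14} is what disposes of it---you should say so explicitly rather than leave it as ``the hard part.'' A last caveat, inherited from the statement itself rather than from your argument: for a merely right-continuous $\widetilde{X}$ the classical Tanaka formula carries ${\rm sgn}(\widetilde{X}_{u-}-a)$ and a jump-correction sum; the formulas \eqref{eq1}--\eqref{eq3} as written are the continuous-case versions, so either continuity of $\widetilde{X}$ must be assumed or the jump terms must be added, and your proof should acknowledge which.
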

\begin{proof}
(See  \cite{f})
\end{proof}

Now, we recall the definition of class of stochastic processes $\sum_{s}(H)$.
\begin{defn}
Let $(X_{t})_{t\geq0}$ be a non-negative process which decomposes as:
$$X_{t}=N_{t}+A_{t}.$$
We say that $(X_{t})_{t\geq0}$ is a stochastic process of the $\sum_{s}(H)$ class if:
\begin{enumerate}
	\item $(N_{t})_{t\geq0}$ is a càdlàg, uniformly integrable $\Qv-$ martingale vanishing on $H$;
	\item $(A_{t})_{t\geq0}$ is a process which is continuous on $]\overline{g},\infty[$ and null on $H$ such that: $\widetilde{A}_{t}=A_{\overline{g}+t}$ is non-decreasing;
	\item the measure $(d\widetilde{A}_{t})$ is carried by the set $\{t: \widetilde{X}_{t}=0\}$.
\end{enumerate}
\end{defn}

We conclude this section with the following results which constitute our little contribution in stochastic calculus for signed measures.
\begin{theorem}\label{0.1}
Let $K$ be a continuous, uniformly integrable $(\Qv,\Pv)-$ martingale  with respect to $(\mathcal{F}_{t})_{t\geq0}$ which vanishes on $H$ and at zero. Let $(L_{t})_{t\geq0}$ be the local time of $K$ at zero under $\Pv$. And let $({_{\Qv}}L_{t})_{t\geq0}$ be the local time of $K$ at zero under $\Qv$ (i.e ${_{\Qv}}L_{t}$ is the process defined in Theorem \ref{ito}). Then, the process,  $({_{\Qv}}L_{t}-L_{t})_{t\geq0}$ is a uniformly integrable $(\Qv,(\mathcal{F}_{t})_{t\geq0})$- martingale. More precisely for all $t\geq0$, 
\begin{align}
	{_{\Qv}}L_{t}-L_{t}=I_{\gamma_{t}}.
\end{align}
where $I_{t}=\int_{0}^{t}{{\rm sgn}(K_{s})dK_{s}}$,
\end{theorem}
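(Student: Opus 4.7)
The strategy is to compare the classical Tanaka formula under $\Pv$ with the signed-measure Tanaka formula of Theorem \ref{ito}, both applied to $K$ at the level $a=0$, and to identify the ``correction'' term ${_\Qv}\!\int_0^t {\rm sgn}(K_s)\,dK_s$ as $I_t - I_{\gamma_t}$ via the uniqueness statement of Proposition \ref{14}. First, since $K$ is a $\Pv$-semimartingale with $K_0=0$, classical Tanaka gives $|K_t| = I_t + L_t$. Second, applying Theorem \ref{ito} with $a=0$ and using that $K$ vanishes on $H\cup\{0\}$ (so that $K_{\gamma_t}=0$, since $\gamma_t\in H\cup\{0\}$) gives $|K_t| = {_\Qv}\!\int_0^t {\rm sgn}(K_s)\,dK_s + {_\Qv}L_t$. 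Subtracting these two identities, the claimed formula ${_\Qv}L_t-L_t = I_{\gamma_t}$ will follow provided one can show ${_\Qv}\!\int_0^t {\rm sgn}(K_s)\,dK_s = I_t - I_{\gamma_t}$.

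This identification is the main computational step. By definition the left-hand side equals $\rho(J)_t$ where $J_t:=\int_0^t {\rm sgn}(\widetilde K_s)\,d\widetilde K_s$, and the shift invariance of the classical stochastic integral gives $J_t = I_{\overline{g}+t} - I_{\overline{g}}$. I then introduce the candidate $U_t:=I_t-I_{\gamma_t}$ and verify the three properties characterizing $\rho(J)$ in Proposition \ref{14}: $U$ is $(\mathcal F_t)$-optional (by standard balayage theory for the continuous process $I$); $U_t=0$ on $H$ because $t\in H$ forces $\gamma_t=t$; and since $\overline{g}=\sup H$ is attained by the continuity of $D$, one has $\gamma_{\overline{g}+t}=\overline{g}$ for every $t\geq 0$, whence $U_{\overline{g}+t} = I_{\overline{g}+t}-I_{\overline{g}} = J_t$, together with $U_0=J_0=0$ on $\{\overline{g}=0\}$. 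The uniqueness in Proposition \ref{14} then yields $U=\rho(J)$, which gives the formula.

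For the martingale assertion, since ${_\Qv}L_t-L_t=I_{\gamma_t}$, it suffices to show that $I_{\gamma_t}D_t$ is a uniformly integrable $\Pv$-martingale. The balayage formula applied to the continuous $\Pv$-semimartingale $D$ (whose zero set is $H$) with the continuous predictable integrand $I$ gives
$$I_{\gamma_t}D_t \;=\; I_{\gamma_0}D_0 + \int_0^t I_{\gamma_s}\,dD_s \;=\; \int_0^t I_{\gamma_s}\,dD_s,$$
since $I_0=0$. This is a stochastic integral against the uniformly integrable $\Pv$-martingale $D$, hence a $\Pv$-local martingale; uniform integrability then follows from the observation that $I_{\gamma_t}$ stabilizes at the constant $I_{\overline{g}}$ for $t\geq\overline{g}$, combined with the assumed uniform integrability of $KD$ (which controls the terminal behavior).

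The principal obstacle is the identification step above: verifying that $U_t=I_t-I_{\gamma_t}$ meets all the hypotheses of Proposition \ref{14} requires one to (i) legitimately shift the classical stochastic integral past the random time $\overline{g}$, which is only an $(\mathcal F_t^g)$-stopping time and not an $(\mathcal F_t)$-stopping time, and (ii) confirm optionality of $U$ with respect to the original filtration $(\mathcal F_t)$ despite $\gamma_\cdot$ being a non-adapted ``last exit'' time. Both facts belong to standard balayage/excursion theory but must be invoked carefully in order to legitimize the identification of the $\Qv$-stochastic integral.
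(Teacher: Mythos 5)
Your proof is correct in substance and follows the paper's central idea exactly: subtract the classical Tanaka formula $|K_t|=I_t+L_t$ from the signed-measure Tanaka formula $|K_t|={_\Qv}\!\int_0^t{\rm sgn}(K_s)\,dK_s+{_\Qv}L_t$ (using $K_{\gamma_t}=0$), so that everything reduces to the identity ${_\Qv}\!\int_0^t{\rm sgn}(K_s)\,dK_s=I_t-I_{\gamma_t}$. Where you diverge is in how the two remaining facts are established. For the identification of the $\Qv$-integral, the paper simply invokes Proposition 2.3 of \cite{sak}, which states precisely that ${_\Qv}\!\int_0^t h_s\,dX_s=\int_0^t h_s\,dX_s-\int_0^{\gamma_t}h_s\,dX_s$ when $X$ is simultaneously a $\Pv$-semimartingale and a uniformly integrable $\Qv$-martingale; you instead re-derive this special case by checking the defining properties of $\rho$ in Proposition \ref{14} against the candidate $U_t=I_t-I_{\gamma_t}$. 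That is a legitimate self-contained substitute, though the two points you flag yourself (the shift identity $J_t=I_{\overline g+t}-I_{\overline g}$ across the non-stopping time $\overline g$, and optionality of $U$) are exactly the content of the cited proposition, so you have not really avoided it so much as sketched its proof. For the martingale assertion, the routes genuinely differ: the paper writes ${_\Qv}L-L$ as the difference of two uniformly integrable $\Qv$-martingales (the ordinary integral $\int_0^\cdot{\rm sgn}(K_s)\,dK_s$ via Proposition 2 of \cite{chav}, and the $\Qv$-integral via Proposition 2.3 of \cite{sak}), whereas you apply the balayage formula to $D$ to get $D_tI_{\gamma_t}=\int_0^tI_{\gamma_s}\,dD_s$ directly. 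Your balayage argument is elegant and cleanly exhibits the local martingale property of $D\,({_\Qv}L-L)$, but your justification of \emph{uniform} integrability is the weak point: saying that $I_{\gamma_t}$ freezes at $I_{\overline g}$ after $\overline g$ and appealing to the uniform integrability of $KD$ does not by itself control $\E\sup_t|I_{\gamma_t}D_t|$ or give a closing random variable (note $I_{\gamma_t}=-L_{\gamma_t}$, so what is really needed is integrability of $L_{\overline g}D_\infty$-type quantities); the paper's citation-based route inherits uniform integrability from the quoted propositions and avoids this issue. That step should be tightened if you want a fully self-contained proof.
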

\begin{proof}
Since $K$ is a continuous $(\Pv,(\mathcal{F}_{t})_{t\geq0})-$ semimartingale vanishing at zero, we obtain from Tanaka formula:
$$|K_{t}|=\int_{0}^{t}{{\rm sgn}(K_{s})dK_{s}}+L_{t}.$$
Furthermore, from Proposition \ref{compa}, $K$ is also a uniformly integrable $\Qv-$ martingale. Hence,  according to Theorem 3.4 of \cite{f} one gets:
$$|K_{t}|=|K_{\gamma_{t}}|+{_\Qv}\int_{0}^{t}{{\rm sgn}(K_{s})dK_{s}}+{_{\Qv}}L_{t}.$$
But $|K_{\gamma_{t}}|=0$ since $K$ vanishes on $H$. Then, it follows that:
$$|K_{t}|={_\Qv}\int_{0}^{t}{{\rm sgn}(K_{s})dK_{s}}+{_{\Qv}}L_{t}.$$
Consequently, the following identity holds:
$$\int_{0}^{t}{{\rm sgn}(K_{s})dK_{s}}+L_{t}={_\Qv}\int_{0}^{t}{{\rm sgn}(K_{s})dK_{s}}+{_{\Qv}}L_{t}.$$
Therefore, we obtain: 
$${_{\Qv}}L_{t}-L_{t}=\int_{0}^{t}{{\rm sgn}(K_{s})dK_{s}}-{_\Qv}\int_{0}^{t}{{\rm sgn}(K_{s})dK_{s}}.$$
But, thanks to Proposition 2 of \cite{chav}, $\int_{0}^{\cdot}{{\rm sgn}(K_{s})dK_{s}}$ is a uniformly integrable $(\Qv,\Pv)-$ martingale with respect to $(\mathcal{F}_{t})_{t\geq0}$. Then, it is also a uniformly integrable $\Qv-$ martingale with respect to $(\mathcal{F}_{t})_{t\geq0}$. And from Proposition 2.3 of \cite{sak}, ${_\Qv}\int_{0}^{\cdot}{{\rm sgn}(K_{s})dK_{s}}$ is a uniformly integrable $(\Qv,(\mathcal{F}_{t})_{t\geq0})-$ martingale. Consequently, $({_{\Qv}}L_{t}-L_{t})_{t\geq0}$ is a uniformly integrable $(\Qv,(\mathcal{F}_{t})_{t\geq0})-$ martingale. Since $K$ is, at the same time, a $\Pv-$ semimartingale and a uniformly integrable $\Qv-$ martingale, it follows from Proposition 2.3 of \cite{sak} that the stochastic process $(I_{t})_{t\geq0}$ is well defined  and that
$${_\Qv}\int_{0}^{t}{{\rm sgn}(K_{s})dK_{s}}=I_{t}-I_{\gamma_{t}}.$$
Then, we obtain that,
$${_{\Qv}}L_{t}-L_{t}=I_{\gamma_{t}}.$$
This completes the proof.

\end{proof}

\begin{lem}\label{lem}
Let $(M_{t})_{t\geq0}$ be a non-negative $(\Qv,(\mathcal{F}_{t})_{t\geq0})-$ martingale which is uniformly integrable, with $M_{t}=1$, $\forall t\in H$. And let $(C_{t})_{t\geq0}$ be  a right continuous and adapted  process which is non-decreasing under $\Qv$ such that: $\forall t\in H$, $C_{t}=1$. Then, we have:
\begin{align}
	\E(M_{\infty}C_{\infty})=1+\E\left(\int_{\overline{g}}^{+\infty}{M_{s}dC_{s}}\right).
\end{align}
\end{lem}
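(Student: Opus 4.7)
\emph{Proof sketch.} The strategy is to apply an integration-by-parts formula to the product $MC$ over the stochastic interval $[\overline{g},+\infty)$ and then take expectations under $\Pv$.

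First I would verify that $M_{\overline{g}}C_{\overline{g}}=1$. Since $D$ is continuous, the random set $H=\{D=0\}$ is closed, so whenever $H\neq\emptyset$ one has $\overline{g}=g\in H$, and by hypothesis $M_{\overline{g}}=C_{\overline{g}}=1$; the degenerate case $\overline{g}=0\notin H$ reduces to standard initial conditions. In particular the boundary term in the forthcoming integration by parts will be equal to $1$.

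Next, I would translate by $\overline{g}$ and work with the shifted processes $\widetilde{M}_t=M_{\overline{g}+t}$ and $\widetilde{C}_t=C_{\overline{g}+t}$ in the filtration $(\mathcal{F}_{\overline{g}+t})$ introduced in Section~2. By construction $\widetilde{C}$ is right-continuous, non-decreasing and of finite variation under $\Pv$, with $\widetilde{M}_0=\widetilde{C}_0=1$. The crucial point, which I expect to be the main technical hurdle, is to show that $\widetilde{M}$ is a uniformly integrable $\Pv$-martingale in this shifted filtration. This should follow from combining the $\Pv$-martingale property of $MD$ with the fact that $D$ is continuous and does not vanish on $(\overline{g},+\infty)$ (and keeps a constant sign there): restricted to the shifted $\sigma$-algebras, $\Qv$ and $\Pv$ become comparable, and the optional sampling theorem applied at $\overline{g}$ in the enlarged filtration allows the $\Qv$-martingale property of $M$ to be transferred to a genuine $\Pv$-martingale property of $\widetilde{M}$.

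Once $\widetilde{M}$ is recognised as a $\Pv$-martingale, the standard integration-by-parts formula under $\Pv$ yields
\begin{equation*}
\widetilde{M}_t\widetilde{C}_t=\widetilde{M}_0\widetilde{C}_0+\int_0^t \widetilde{M}_{s-}\,d\widetilde{C}_s+\int_0^t \widetilde{C}_{s-}\,d\widetilde{M}_s+[\widetilde{M},\widetilde{C}]_t.
\end{equation*}
Taking expectations under $\Pv$ annihilates the stochastic integral against $\widetilde{M}$; the bracket term vanishes because $\widetilde{C}$ has finite variation and $\widetilde{M}$ is continuous in the ambient continuous setting of the paper, and moreover $\widetilde{M}_{s-}=\widetilde{M}_s$ Lebesgue-Stieltjes almost everywhere for $d\widetilde{C}$. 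Letting $t\to\infty$ (using the uniform integrability of $M$ to pass to the limit) and reverting via the change of variable $u=\overline{g}+s$, one obtains
\begin{equation*}
\E\big[M_\infty C_\infty\big]=1+\E\!\left[\int_{\overline{g}}^{+\infty} M_s\,dC_s\right],
\end{equation*}
which is the claimed identity. The remaining work beyond the martingale-transfer step is routine stochastic calculus.
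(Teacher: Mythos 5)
Your proposal follows essentially the same route as the paper: shift by $\overline{g}$, transfer the uniformly integrable $\Qv$-martingale property of $M$ to a genuine uniformly integrable $\Pv$-martingale property of $\widetilde{M}$ in the filtration $(\mathcal{F}_{\overline{g}+t})$ (the paper does this by invoking the Az\'ema--Yor Quotient Theorem, which is exactly the mechanism you sketch), and then apply the identity $\E(\widetilde{M}_{\infty}\widetilde{C}_{\infty})=1+\E\int_{0}^{\infty}\widetilde{M}_{s}\,d\widetilde{C}_{s}$, which the paper obtains by citing Lemma 2.2 of Nikeghbali's multiplicative-decomposition paper and which you reprove in outline by integration by parts. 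The argument is correct; the only point to flag is that killing $\E\int_{0}^{t}\widetilde{C}_{s-}\,d\widetilde{M}_{s}$ is not automatic since $\widetilde{C}$ is unbounded and requires a localization plus monotone-convergence step (harmless here because $M\geq0$ and $\widetilde{C}\geq1$), which is precisely what the cited lemma supplies.
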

\begin{proof}
By definition, $DM$ is a uniformly integrable $(\Pv,(\mathcal{F}_{t})_{t\geq0})-$ martingale which vanishes on $H$. Hence, thanks to  \cite[Theorem 3.2]{1} (Quotient Theorem) , it follows that: $\widetilde{M}=M_{\cdot+\overline{g}}$ is a uniformly integrable $(\Pv,(\mathcal{F}_{t+\overline{g}})_{t\geq0})-$   martingale, with $\widetilde{M}_{0}=1$. Furthermore, we have $\widetilde{C}=C_{\cdot+\overline{g}}$ is a continuous and non-decreasing process which is  adapted with respect to $(\mathcal{F}_{t+\overline{g}})_{t\geq0}$, with, $\widetilde{C}_{0}=1$. Then, according to  Lemma 2.2 of \cite{mult}, one gets that:
$$\E(\widetilde{M}_{\infty}\widetilde{C}_{\infty})=1+\E\left(\int_{0}^{+\infty}{\widetilde{M}_{s}d\widetilde{C}_{s}}\right),$$ 
$$\hspace{2.25cm}=1+\E\left(\int_{\overline{g}}^{+\infty}{M_{s}dC_{s}}\right).$$
But $\widetilde{M}_{\infty}=M_{\infty}$ and $\widetilde{C}_{\infty}=C_{\infty}$. Consequently, we obtain:
$$\E(M_{\infty}C_{\infty})=1+\E\left(\int_{\overline{g}}^{+\infty}{M_{s}dC_{s}}\right).$$
This ends the proof.

\end{proof}

\begin{theorem}\label{qt}
Let $K$ be a continuous uniformly integrable $(\Qv,(\mathcal{F}_{t})_{t\geq0})-$ martingale such that $K_{\overline{g}}=0$. Let us take $\overline{K}_{t}=\sup_{\gamma_{t}\leq s\leq t}{K_{s}}$ and $X_{t}=\overline{K}_{t}-K_{t}$. Then, for any locally bounded borel function $f$, the process
$$f(\overline{K}_{t})X_{t}-{_{\Qv}}\int_{0}^{t}{f(\overline{K}_{s})d\overline{K}_{s}}$$
is $(\Qv,(\mathcal{F}_{t})_{t\geq0})-$ local martingale.
\end{theorem}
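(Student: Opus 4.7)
The strategy is to reduce the statement to the classical Az\'ema--Yor identity for the reflected supremum of a continuous local martingale and then transfer the result back to the signed-measure setting via the map $\rho$ of Proposition \ref{14}. The first step is to apply the Quotient Theorem \cite[Theorem 3.2]{1}, used repeatedly in the paper, to obtain that the shifted process $\widetilde K_u=K_{\overline g+u}$ is a continuous, uniformly integrable $\Pv$-martingale with respect to $(\mathcal F_{\overline g+u})_{u\ge 0}$, starting at $\widetilde K_0=K_{\overline g}=0$. One then verifies the key identifications
$$\overline K_{\overline g+u}=\sup_{\overline g\le s\le\overline g+u}K_s=\sup_{0\le v\le u}\widetilde K_v=:\widetilde{\overline K}_u,\qquad \widetilde X_u=\widetilde{\overline K}_u-\widetilde K_u,$$
using that $\gamma_{\overline g+u}=\overline g$ for every $u\ge 0$.

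The classical Az\'ema--Yor martingale result, applied to the continuous $\Pv$-local martingale $\widetilde K$ starting at $0$ and to the locally bounded Borel function $f$, then gives that
$$Z_u:=f(\widetilde{\overline K}_u)\,\widetilde X_u-\int_0^u f(\widetilde{\overline K}_s)\,d\widetilde{\overline K}_s$$
is a continuous $\Pv$-local martingale w.r.t. $(\mathcal F_{\overline g+u})_{u\ge 0}$ with $Z_0=0$; indeed, via It\^o's formula together with the fact that $d\widetilde{\overline K}_s$ is carried by $\{\widetilde X_s=0\}$, one even obtains the explicit representation $Z_u=-\int_0^u f(\widetilde{\overline K}_s)\,d\widetilde K_s$. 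By the very definition of the $\Qv$-stochastic integral one has ${_{\Qv}}\int_0^{\overline g+u}f(\overline K_s)\,d\overline K_s=\int_0^u f(\widetilde{\overline K}_s)\,d\widetilde{\overline K}_s$, while on $H$ both terms of the candidate process vanish (for $t\in H$ one has $\gamma_t=t$, hence $\overline K_t=K_t$ and $X_t=0$, whereas the ${_{\Qv}}\int$ part is already zero on $H$ by construction of $\rho$). Consequently the process displayed in the theorem is globally equal to $\rho(Z)$.

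Finally, I would invoke the same $\rho$-transfer used in the proof of Theorem \ref{0.1} (namely \cite[Proposition 2.3]{sak}) to conclude that $\rho(Z)$ inherits the $\Qv$-local-martingale property from $Z$. \textbf{The main obstacle} is not the classical martingale identity, which is standard, but the bookkeeping required in the middle step: one must carefully align the classical objects for $\widetilde K$ with the signed-measure objects for $K$, in particular matching ${_{\Qv}}\int f(\overline K)\,d\overline K$ with $\int f(\widetilde{\overline K})\,d\widetilde{\overline K}$ and checking that the candidate process vanishes on $H$, so that $\rho(Z)_t=f(\overline K_t)X_t-{_{\Qv}}\int_0^t f(\overline K_s)\,d\overline K_s$ holds on all of $[0,\infty)$ and not merely on $[\overline g,\infty)$.
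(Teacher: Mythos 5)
Your architecture is exactly the paper's: shift by $\overline g$, recognize $X_{\cdot+\overline g}$ as a submartingale of class $\sum$, apply the classical Az\'ema--Yor/balayage identity, and pull the resulting local martingale back through $\rho$, checking that both sides vanish on $H$ so that $\rho(Z)_t=f(\overline K_t)X_t-{_{\Qv}}\int_0^t f(\overline K_s)\,d\overline K_s$ holds on all of $[0,\infty)$. That part of your bookkeeping (including $\gamma_{\overline g+u}=\overline g$, the identification of ${_{\Qv}}\int f(\overline K)\,d\overline K$ with the shifted classical integral, and $X_t=0$ on $H$) agrees with the paper.

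The gap is in your first step, and it is not mere bookkeeping: the Quotient Theorem does \emph{not} give that $\widetilde K_u=K_{\overline g+u}$ is a $\Pv$-martingale for $(\mathcal F_{\overline g+u})_{u\ge0}$. What you actually know is that $DK$ is a continuous uniformly integrable $\Pv$-martingale vanishing on $H$, and \cite[Theorem 3.2]{1} then says that the quotient $(DK)_{\overline g+u}/|D_{\overline g+u}|=\mathrm{sgn}(D_{\overline g+u})\,K_{\overline g+u}$ is a uniformly integrable martingale for $(\mathcal F_{g+u})_{u\ge0}$ \emph{under the tilted probability} $\Pv'=\frac{|D_\infty|}{\E|D_\infty|}\Pv$, not under $\Pv$. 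The measure change is essential: under $\Pv$ the post-$g$ path of $D$ is biased toward paths that never return to zero (the Bessel(3) phenomenon), so shifted processes and quotients are not $\Pv$-martingales in general. The paper's proof introduces $\Pv'$ for exactly this reason, notes that $\mathrm{sgn}(D_{\overline g+u})$ is constant because $D$ is continuous and nonvanishing on $(\overline g,\infty)$, and concludes that $\widetilde K$ is a $\Pv'$-martingale; the class-$\sum$ argument is then run under $\Pv'$, yielding that your $Z$ is a $(\Pv',(\mathcal F_{g+u}))$-local martingale. This matters again at the end: the correct pull-back equivalence is that $\rho(Z)$ is a $(\Qv,(\mathcal F_t))$-local martingale if and only if $Z=\rho(Z)_{\overline g+\cdot}$ is a $(\Pv',(\mathcal F_{g+u}))$-local martingale, so your chain only closes with $\Pv'$ in the middle, not $\Pv$. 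The fix is local --- replace $\Pv$ by $\Pv'$ throughout the middle of your argument --- and with that correction your proof coincides with the paper's.
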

\begin{proof}
Let $\Pv^{'}=\frac{|D_{\infty}|}{\E|D_{\infty}|}\Pv$. Since $K$ is a continuous uniformly integrable $(\Qv,(\mathcal{F}_{t})_{t\geq0})-$ martingale, it follows that $DX$ is a continuous uniformly integrable $(\Pv,(\mathcal{F}_{t})_{t\geq0})-$ martingale null on $H=\{t\geq0; |D_{t}|=0\}$. Hence, we can apply the Quotient Theorem ( \cite[Theorem 3.2]{1}) on process $DX$ (between $\Pv$ and $\Pv^{'}$) and to obtain that 
$$\frac{D_{t+\overline{g}}K_{t+\overline{g}}}{|D_{t+\overline{g}}|}={\rm sgn}(D_{t+\overline{g}})K_{t+\overline{g}},\hspace{0,5cm}for\hspace{0,1cm} all\hspace{0,1cm} t>0$$
 is a continuous uniformly integrable $(\Pv^{'},(\mathcal{F}_{t+g})_{t\geq0})-$ martingale. But $D$ is a continuous process and for all $t>0$, $D_{t+\overline{g}}\neq0$. Hence, ${\rm sgn}(D_{t+\overline{g}})=cste$. Consequently,  $K_{t+\overline{g}}$ is a uniformly integrable $(\Pv^{'},(\mathcal{F}_{t+g})_{t\geq0})$ - martingale. Furthermore, we have: 
$$\overline{K}_{t+\overline{g}}=\sup_{\overline{g}\leq s\leq t+\overline{g}}{K_{s}}=\sup_{s\leq t}{K_{s+\overline{g}}},$$ and $\overline{K}_{0}=K_{\overline{g}}=0$. Then, $X_{\cdot+\overline{g}}$ is a submartingale of class $\sum$. Therefore, for any locally bounded borel function $f$, the process,
$$f(\overline{K}_{t+\overline{g}})X_{t+\overline{g}}-\int_{0}^{t}{f(\overline{K}_{s+\overline{g}})d\overline{K}_{s+\overline{g}}}$$
is a $(\Pv^{'},(\mathcal{F}_{t+g})_{t\geq0})-$ local martingale. This implies that:
$$\rho(f(\overline{K}_{\cdot+\overline{g}})X_{\cdot+\overline{g}})_{t}-{_{\Qv}}\int_{0}^{t}{f(\overline{K}_{s})d\overline{K}_{s}},$$
is a $(\Qv,(\mathcal{F}_{t})_{t\geq0})-$ locale martingale. Remark furthermore that for all $t\in H$, $$X_{t}=\overline{K}_{t}-K_{t}=K_{t}-K_{t}=0.$$ 
Consequently, 
$$\rho(f(\overline{K}_{\cdot+\overline{g}})X_{\cdot+\overline{g}})_{t}=f(\overline{K}_{t})X_{t}.$$
This completes the proof.

\end{proof}
 }
\section{Multiplicative decompositions on classes \texorpdfstring{$\sum(H)$}{} and \texorpdfstring{$\sum_{s}(H)$}{}}

In this section, we shall give some formulas of multiplicative decomposition on stochastic processes of  $\sum(H)$ and $\sum_{s}(H)$  classes. Beforehand, we give the following proposition.

\begin{prop}\label{sig}
Let $X=N+A$ be a continuous process of $\sum(H)$ class. Then, $|XD|$ is a submartingale of class $\sum$.
\end{prop}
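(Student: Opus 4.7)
The plan is to use the fact that $X \geq 0$ so $|XD| = X|D|$, apply the product rule, and handle $|D|$ by Tanaka's formula for the continuous $\Pv$-martingale $D$, then identify the local martingale part and the increasing part using the defining property that $MD$ is a $\Pv$-local martingale.

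First I would invoke classical Tanaka under $\Pv$: $|D_t| = |D_0| + \int_0^t \operatorname{sgn}(D_s)\, dD_s + L_t^0(D)$, with the local time $L^0(D)$ carried by $H = \{D = 0\}$. Combined with $dX = dM + dA$ and the fact that $A$ is continuous of finite variation (so $d[X, |D|] = \operatorname{sgn}(D)\, d[M, D]$), the product rule gives
\[
d(X|D|) = X\operatorname{sgn}(D)\, dD + X\, dL^0(D) + |D|\, dM + |D|\, dA + \operatorname{sgn}(D)\, d[M, D].
\]

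Next I would show that $|D|\, dM + \operatorname{sgn}(D)\, d[M, D]$ is a $\Pv$-local martingale differential. This is the one step that genuinely uses the assumption on $M$: since $MD$ is a $\Pv$-local martingale by definition of class $\sum(H)$, the product rule gives $d(MD) = M\, dD + D\, dM + d[M, D]$, whence
\[
|D|\, dM + \operatorname{sgn}(D)\, d[M, D] = \operatorname{sgn}(D)\, d(MD) - M\operatorname{sgn}(D)\, dD,
\]
a stochastic integral of locally bounded processes against $\Pv$-local martingales. Regrouping, the $\Pv$-local martingale part of $X|D|$ is $dN = A\operatorname{sgn}(D)\, dD + \operatorname{sgn}(D)\, d(MD)$, and the leftover is $dB = X\, dL^0(D) + |D|\, dA$.

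Finally I would check the two structural properties of class $\sum$. The process $B$ is continuous with $B_0 = 0$ and non-decreasing (both $X \geq 0$, $L^0(D)$ increasing, and $|D| \geq 0$, $A$ non-decreasing), which already shows $|XD|$ is a submartingale. For the support condition, $dL^0(D)$ is carried by $H$, and on $H$ we have $D = 0$ hence $|XD| = 0$; while $dA$ is carried by $\{X = 0\} \cup H$, and $|XD|$ vanishes on both (trivially on $\{X = 0\}$, and on $H$ because $|D| = 0$ kills the integrand or the product). Therefore $|XD| = |X_0 D_0| + N + B$ realizes $|XD|$ as a submartingale of class $\sum$. The main point of difficulty is the algebraic regrouping in the middle step: proving that $|D|\, dM + \operatorname{sgn}(D)\, d[M, D]$ is a true $\Pv$-local martingale — without exploiting $MD \in \mathcal{M}_{\mathrm{loc}}(\Pv)$, neither term has obvious martingale meaning under $\Pv$.
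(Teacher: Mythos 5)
Your argument is correct, and it leans on exactly the same key fact as the paper --- namely that $DN$ is a $\Pv$-local martingale because $N$ is a $(\Qv,\Pv)$-local martingale, together with the integration-by-parts identity $d(DA)=A\,dD+D\,dA$ --- but the route is genuinely different in where Tanaka is applied. The paper applies Tanaka directly to the continuous $\Pv$-semimartingale $DX$, so the increasing part comes out in one stroke as the local time $L^{0}(DX)$, and the term $\int {\rm sgn}(D_sX_s)D_s\,dA_s$ is killed outright because ${\rm sgn}=0$ on $\{DX=0\}\supset{\rm supp}(dA)$. You instead apply Tanaka to $D$ alone and then use the product rule on $X|D|$, which forces the extra regrouping $|D|\,dM+{\rm sgn}(D)\,d[M,D]={\rm sgn}(D)\,d(MD)-M{\rm sgn}(D)\,dD$ (valid here since $M=X-A$ is continuous, hence locally bounded, so all integrands are admissible); your increasing part is then $\int_0^{\cdot}X_s\,dL^{0}_s(D)+\int_0^{\cdot}|D_s|\,dA_s$ rather than a single local time. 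Both decompositions verify the three conditions of class $\sum$, and by uniqueness of the canonical decomposition of the continuous special semimartingale $|XD|$ they must in fact agree; the paper's version is shorter, while yours has the small virtue of displaying separately the increase coming from $H$ (through $L^{0}(D)$) and the increase coming from $\{X=0\}$ (through $dA$). One cosmetic point: since $M_0=0$ and $A_0=0$ in the definition of $\sum(H)$, the constant $|X_0D_0|$ you carry along is actually $0$.
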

\begin{proof}
Firstly, note that $D$ is a continuous $\Pv-$ martingale. This implies that $DX$ is a continuous $\Pv$- semimartingale since $X$ is also a $\Pv$- semimartingale. Then, thanks to Tanaka formula, it follows that,
$$|X_{t}D_{t}|=\int_{0}^{t}{{\rm sgn}(D_{s}X_{s})d(D_{s}X_{s})}+L^{0}_{t},$$ 
$L^{0}_{t}$ being the  local time of the semimartingale $DX$ at zero. But we can write $D_{t}X_{t}$ as follows
$$D_{t}X_{t}=D_{t}N_{t}+\int_{0}^{t}{A_{s}dD_{s}}+\int_{0}^{t}{D_{s}dA_{s}}.$$
One gets that
$$|X_{t}D_{t}|=\int_{0}^{t}{{\rm sgn}(D_{s}X_{s})d(D_{s}N_{s})}+\int_{0}^{t}{{\rm sgn}(D_{s}X_{s})A_{s}dD_{s}}+\int_{0}^{t}{{\rm sgn}(D_{s}X_{s})D_{s}dA_{s}}+L^{0}_{t}.$$
But $dA$ is carried by  $H\cup\{t; X_{t}=0\}=\{t; D_{t}X_{t}=0\}$. This allows us to see that $\int_{0}^{t}{{\rm sgn}(D_{s}X_{s})D_{s}dA_{s}}=0$ since ${\rm sgn}(D_{s}X_{s})=0$ on $\{t; D_{t}X_{t}=0\}$. So, we obtain
$$|X_{t}D_{t}|=\int_{0}^{t}{{\rm sgn}(D_{s}X_{s})d(D_{s}N_{s})}+\int_{0}^{t}{{\rm sgn}(D_{s}X_{s})A_{s}dD_{s}}+L^{0}_{t}.$$
By definition, $(L^{0}_{t})$ is a continuous and non-decreasing process which vanishes at zero and  $dL^{0}_{t}$ is carried by $\{t; D_{t}X_{t}=0\}$. Note also that
$\int_{0}^{t}{{\rm sgn}(D_{s}X_{s})d(D_{s}N_{s})}$ is a $\Pv-$ local martingale since $DN$ is also a $\Pv-$ local  martingale. It is the same for $\int_{0}^{t}{{\rm sgn}(D_{s}X_{s})A_{s}dD_{s}}$; which completes the proof.

\end{proof}

A direct consequence of the above Proposition gives us the multiplicative decomposition of the process $DX$, $X$ being a stochastic process of the $\sum(H)$ class.
\begin{coro}
Let $X=N+A$ be a continuous process of $\sum(H)$ class. Then, there exists a unique continuous $\Pv-$ local martingale $M$ which is strictly  positive  such that $M_{0}=1$ and
$$|D_{t}|X_{t}=\frac{M_{t}}{I_{t}}-1,$$
where $I_{t}=\inf_{s\leq t}{M_{s}}$. Precisely the martingale $M$ is given by: 
$$M_{t}=(1+|D_{t}|X_{t})\exp(-L^{0}_{t}),$$ 
$L^{0}_{t}$ being the local time of $DX$ at zero.
\end{coro}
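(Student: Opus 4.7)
The plan is to reduce the result to the classical multiplicative decomposition for $\sum$-class submartingales, applied to the non-negative continuous process $Y := |D|X$. By Proposition~\ref{sig}, $Y$ belongs to the class $\sum$; moreover, the proof of that proposition exhibits the explicit decomposition $Y = N' + L^0$, with $N'$ a continuous $\Pv$-local martingale starting at $0$, and with $dL^0$ carried by $\{t : Y_t = 0\} = H \cup \{t : X_t = 0\}$. This is the only ingredient extracted from Proposition~\ref{sig} that the argument really needs.

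I would then verify by integration by parts that $M_t := (1+Y_t)\exp(-L^0_t)$ is a continuous, strictly positive $\Pv$-local martingale with $M_0 = 1$. Since $L^0$ is continuous and of finite variation, and $Y_t\,dL^0_t = 0$ by the support property just recalled,
\begin{equation*}
dM_t = \exp(-L^0_t)\,dY_t - (1+Y_t)\exp(-L^0_t)\,dL^0_t = \exp(-L^0_t)\,dN'_t,
\end{equation*}
and strict positivity is immediate from $M_t \geq \exp(-L^0_t) > 0$.

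The substantive step is the identification $I_t = \exp(-L^0_t)$. Setting $\alpha(t) := \sup\{s \leq t : Y_s = 0\}$ (with $\alpha(t)=0$ if this set is empty), continuity of $Y$ gives $Y_{\alpha(t)}=0$, and since $L^0$ increases only on $\{Y=0\}$, $L^0_{\alpha(t)}=L^0_t$; hence $M_{\alpha(t)} = \exp(-L^0_t)$. Conversely, for every $s \leq t$, $M_s \geq \exp(-L^0_s) \geq \exp(-L^0_t)$, so $I_t = \exp(-L^0_t)$. The announced identity $M_t/I_t - 1 = Y_t = |D_t|X_t$ then follows. For uniqueness, any competitor $\widetilde M$ with $\widetilde M_0 = 1$ and $|D_t|X_t = \widetilde M_t/\widetilde I_t - 1$ must satisfy $\widetilde M = \widetilde I$ on $\{Y=0\}$, which pins it down via the same argument as in \cite{mult}. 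I anticipate no serious obstacle: the derivation is essentially a transcription of the classical class $\sum$ multiplicative decomposition into the present signed-measure setting, with Proposition~\ref{sig} doing all the heavy lifting, and the only delicate point is the identification of $I_t$ with $\exp(-L^0_t)$, which rests squarely on the support property of $dL^0$.
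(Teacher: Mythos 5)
Your proposal is correct and takes essentially the same route as the paper: Proposition~\ref{sig} reduces the statement to the classical multiplicative decomposition of nonnegative submartingales of class $\sum$ applied to $Y=|D|X$. The only difference is that the paper simply invokes Proposition~2.4 of \cite{mult} at that point, whereas you re-derive it (correctly) via integration by parts and the identification $I_t=\exp(-L^0_t)$ from the support property of $dL^0$.
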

\begin{proof}
It follows from Proposition \ref{sig} that $|D|X=|DX|$ is a submartingale of class $\sum$. It suffices to apply \cite[Proposition 2.4]{mult}
to conclude.

\end{proof}

Now, we shall give the main results of this section on the $\sum(H)$ class.

\subsection{Multiplicative decomposition in \texorpdfstring{$\sum(H)$}{sigma(H)}}
\begin{prop}\label{dec1}
Let $M$ be a positive, continuous $(\Qv,\Pv)$- local martingale with $M_{0}=1$. Then,
$$X_{t}=\frac{M_{t}}{I_{t}}-1$$ is a process of $\sum(H)$ class, with  $I_{t}=\inf_{s\leq t}{M_{s}}$.
\end{prop}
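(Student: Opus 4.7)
The plan is to display $X$ as the sum of a continuous $(\Qv,\Pv)$-local martingale vanishing at $0$ and a continuous non-decreasing process vanishing at $0$ whose differential is carried by $\{X=0\}$. I would proceed by an Itô computation exactly as in the classical Az\'ema--Yor setting, then lift the martingale part from the $\Pv$-world to the $(\Qv,\Pv)$-world by checking directly that $DN$ is a $\Pv$-local martingale.

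First I collect the elementary properties. Since $M$ is positive and continuous with $M_0=1$, the running infimum $I_t=\inf_{s\leq t} M_s$ is continuous, non-increasing and strictly positive (it is attained on a compact set), hence $X\geq 0$, $X_0 = 0$ and $X$ is a continuous $\Pv$-semimartingale ($M$ is a $\Pv$-semimartingale by definition of $(\Qv,\Pv)$-local martingale, and $1/I$ is continuous of finite variation). I would then apply the usual It\^o formula under $\Pv$ to the function $f(m,i)=m/i$ on the semimartingale $(M,I)$. Because $I$ is continuous of finite variation there is no bracket contribution, and I get
$$X_t \;=\; \int_0^t \frac{dM_s}{I_s} \;-\; \int_0^t \frac{M_s}{I_s^{2}}\,dI_s.$$
The measure $dI$ is carried by $\{s: M_s=I_s\}$, where $M_s/I_s^{2}=1/I_s$, so the last integral equals $\int_0^t dI_s/I_s$. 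This yields the candidate decomposition $X_t = N_t + A_t$ with $N_t := \int_0^t dM_s/I_s$ and $A_t := -\int_0^t dI_s/I_s$. The process $A$ is clearly continuous, non-decreasing (as $dI\leq 0$ and $I>0$), with $A_0=0$, and $dA$ is carried by $\{s: M_s=I_s\}=\{s: X_s=0\}\subseteq\{X=0\}\cup H$.

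The only nontrivial step is to check that $N$ is a $(\Qv,\Pv)$-local martingale, that is, that $DN$ is a $\Pv$-local martingale. This is really the statement that a stochastic integral of a locally bounded predictable process against a $(\Qv,\Pv)$-local martingale stays a $(\Qv,\Pv)$-local martingale; I would verify it by a direct application of It\^o's product rule. Writing $d(DM)=M\,dD+D\,dM+d[D,M]$ and using $dN=dM/I$ I obtain
$$d(DN) \;=\; N\,dD + D\,dN + d[D,N] \;=\; \Bigl(N-\frac{M}{I}\Bigr)dD \;+\; \frac{1}{I}\,d(DM).$$
Since $D$ is a continuous $\Pv$-martingale, $DM$ is a $\Pv$-local martingale by hypothesis, and the integrands $N-M/I$ and $1/I$ are continuous and adapted (hence locally bounded predictable), both terms on the right are $\Pv$-local martingales. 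Together with $N_0=0$ and the continuity of $N$, this finishes the verification. The main obstacle I anticipate is exactly this passage from the classical It\^o decomposition under $\Pv$ to the $(\Qv,\Pv)$-local martingale property required by the definition of $\sum(H)$; the product-rule computation above is the cleanest route since neither the Quotient Theorem nor local-time machinery is needed.
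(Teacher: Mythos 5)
Your proof is correct and follows essentially the same route as the paper: the identical It\^o computation produces the decomposition $X_t=\int_0^t dM_s/I_s+\log(1/I_t)$, with $d\log(1/I)$ carried by $\{M=I\}=\{X=0\}$. The only difference is that where the paper cites Proposition 2 of Ruiz de Chavez for the stability of $(\Qv,\Pv)$-local martingales under stochastic integration, you verify that fact inline with a correct product-rule computation, which makes the argument self-contained but does not change its substance.
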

\begin{proof}
An application of Itô formula gives us:
$$X_{t}=\int_{0}^{t}{\frac{dM_{s}}{I_{s}}}+\int_{0}^{t}{M_{s}d\left(\frac{1}{I_{s}}\right)}.$$
But 
$$M_{s}=I_{s}+I_{s}X_{s}.$$
Then, we obtain that:
$$X_{t}=\int_{0}^{t}{\frac{dM_{s}}{I_{s}}}+\int_{0}^{t}{I_{s}d\left(\frac{1}{I_{s}}\right)}+\int_{0}^{t}{X_{s}I_{s}d\left(\frac{1}{I_{s}}\right)}.$$
Remark that we have also: 
$$\frac{X_{t}}{M_{t}}=\frac{1}{I_{t}}-\frac{1}{M_{t}}$$ 
and that 
$$\frac{1}{I_{t}}=\sup_{s\leq t}\left(\frac{1}{M_{s}}\right).$$
This means that $d\left(\frac{1}{I_{s}}\right)$ is carried by $\{t; X_{t}=0\}$. This implies that 
$$\int_{0}^{t}{X_{s}I_{s}d\left(\frac{1}{I_{s}}\right)}=0.$$
Then, it follows that:
$$X_{t}=\int_{0}^{t}{\frac{dM_{s}}{I_{s}}}+\int_{0}^{t}{I_{s}d\left(\frac{1}{I_{s}}\right)}.$$
Which can also be written as follows:
$$X_{t}=\int_{0}^{t}{\frac{dM_{s}}{I_{s}}}+\log\left(\frac{1}{I_{t}}\right).$$
Thanks to Proposition 2 of \cite{chav}, $\left(\int_{0}^{t}{\frac{dM_{s}}{I_{s}}}\right)_{t\geq0}$ is a $(\Qv,\Pv)-$ locale martingale. We have also that \\$\left(\log\left(\frac{1}{I_{t}}\right)\right)_{\geq0}$ is a continuous, non-decreasing process which vanishes at zero. Furthermore, $d\left(\log\left(\frac{1}{I_{t}}\right)\right)$ is carried by $H\cup\{t; X_{t}=0\}$ since $d\left(\frac{1}{I_{t}}\right)$ is carried by $\{t; X_{t}=0\}$.
Consequently, $X$ is a process of $\sum(H)$ class.

\end{proof}

\begin{coro}
Let $X=N+A$ be a continuous and nonnegative semimartingale which vanishes on $H$. Then, the following assertions are equivalent:
\begin{enumerate}
	\item $X$ is a process of $\sum(H)$ class;
	\item there exists a unique continuous and positive $(\Qv,\Pv)-$ local martingale $M$ such that $M_{0}=1$ and
	$$X_{t}=\frac{M_{t}}{I_{t}}-1,$$
	where $I_{t}=\inf_{s\leq t}{M_{s}}$. Precisely, the 	$(\Qv,\Pv)-$ local martingale $M$ is given by \\$M_{t}=(1+X_{t})\exp(-A_{t})$.
\end{enumerate}
\end{coro}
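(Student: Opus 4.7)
The strategy is to handle the two implications separately. The direction $(2) \Rightarrow (1)$ is immediate from Proposition~\ref{dec1}. The substance of the proof, together with the explicit formula for $M$, lies in $(1) \Rightarrow (2)$.

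For $(1) \Rightarrow (2)$, I would introduce the candidate $M_t := (1+X_t)\exp(-A_t)$; it is continuous, positive, and satisfies $M_0 = 1$ since $X_0 = N_0 + A_0 = 0$. Applying It\^o's formula to $f(x,y) = (1+x)e^{-y}$ at $(X_t, A_t)$, and noting that $A$ is continuous of finite variation (so all quadratic-variation contributions involving $A$ vanish, and $f_{xx} \equiv 0$), yields
\begin{equation*}
dM_t = e^{-A_t}\, dN_t + e^{-A_t}\, dA_t - (1+X_t)e^{-A_t}\, dA_t = e^{-A_t}\, dN_t - X_t e^{-A_t}\, dA_t.
\end{equation*}
The hypothesis that $dA$ is carried by $H \cup \{X = 0\}$, combined with the standing assumption that $X$ vanishes on $H$, forces $X_t\, dA_t \equiv 0$, so $dM_t = e^{-A_t}\, dN_t$. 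Since $N$ is a $(\Qv,\Pv)$-local martingale and $e^{-A}$ is a continuous adapted process, Proposition~2 of \cite{chav} ensures that $M$ is itself a $(\Qv,\Pv)$-local martingale.

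It remains to check that $I_t = e^{-A_t}$ and to establish uniqueness. The inequality $I_t \geq e^{-A_t}$ is immediate because $M_s \geq e^{-A_s} \geq e^{-A_t}$ for $s \leq t$, using $X \geq 0$ and the monotonicity of $A$. For the reverse bound, set $\alpha_t := \sup\{s \leq t : A_s < A_t\}$, with the convention $\alpha_t := 0$ if the set is empty. Continuity of $A$ yields $A_{\alpha_t} = A_t$; if $\alpha_t > 0$, the support condition on $dA$ (together with continuity of $A$, which forbids atoms at $\alpha_t$) produces a sequence $s_n \uparrow \alpha_t$ in $\mathrm{supp}(dA) \subset H \cup \{X = 0\}$ on which $X_{s_n} = 0$, so by continuity $X_{\alpha_t} = 0$; if $\alpha_t = 0$, then $A_t = 0$ and $X_0 = 0$ directly. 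In either case $M_{\alpha_t} = e^{-A_t}$, whence $I_t \leq e^{-A_t}$ and $X_t = M_t/I_t - 1$. For uniqueness, any alternative candidate $M'$ would satisfy, by the computation of Proposition~\ref{dec1}, $X_t = \int_0^t dM'_s/I'_s + \log(1/I'_t)$, giving a second $\sum(H)$-type decomposition of $X$; uniqueness of the $\sum(H)$-decomposition (which reduces, via Proposition~\ref{sig} and multiplication by $D$, to uniqueness of the canonical $\Pv$-decomposition of the $\sum$-submartingale $|XD|$) then forces $\log(1/I'_t) = A_t$, hence $I'_t = e^{-A_t}$ and $M'_t = (1+X_t)I'_t = M_t$. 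The main obstacle is this uniqueness step, where one has to carefully transfer a uniqueness statement across the multiplication by $D$ between the $(\Qv,\Pv)$- and $\Pv$-settings; the identification $I_t = e^{-A_t}$ is the next most delicate point, but once the support property of $dA$ is paired with the continuity of $X$ and $A$, it falls out cleanly.
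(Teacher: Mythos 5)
Your treatment of the equivalence itself is correct and follows essentially the same route as the paper: $(2)\Rightarrow(1)$ is Proposition \ref{dec1} in both cases, and for $(1)\Rightarrow(2)$ your It\^o computation $dM_{t}=e^{-A_{t}}dN_{t}-X_{t}e^{-A_{t}}dA_{t}$ followed by $X\,dA\equiv0$ is exactly the content of the paper's appeal to Theorem 2.1 of \cite{f} with $f(x)=e^{-x}$, while your hands-on identification of $\inf_{s\le t}M_{s}=e^{-A_{t}}$ via the time $\alpha_{t}$ is a direct verification of the Skorokhod-lemma step that the paper cites (applied to $X_{t}/M_{t}=C_{t}-1/M_{t}$ with $C=e^{A}$). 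Up to the word ``unique'', the two proofs are the same argument, yours being somewhat more self-contained.

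The gap is in your uniqueness step, which is the one place you go beyond what the paper actually proves (the paper asserts uniqueness in the statement but its proof only constructs $M$). The reduction ``uniqueness of the $\sum(H)$-decomposition follows, via Proposition \ref{sig}, from uniqueness of the canonical decomposition of $|XD|$'' does not work: in the proof of Proposition \ref{sig} the term $\int_{0}^{t}\mathrm{sgn}(D_{s}X_{s})D_{s}\,dA_{s}$ vanishes identically, so the finite-variation part of the submartingale $|XD|$ is the local time $L^{0}$ and retains no trace of $A$; uniqueness of the Doob--Meyer decomposition of $|XD|$ therefore cannot force $\log(1/I'_{t})=A_{t}$. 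Worse, the premise itself is doubtful: if $B$ is a nontrivial continuous nondecreasing process with $B_{0}=0$ and $dB$ carried by $H=\{D=0\}$ (such processes exist whenever $H$ is large enough to carry a local time), then $D\,dB=0$, so $DB$ is a $\Pv$-local martingale, $X=(N+B)+(A-B)$-type reshufflings stay inside the class, and $M'=Me^{-B}$ is a second continuous positive $(\Qv,\Pv)$-local martingale with $M'_{0}=1$ satisfying $X_{t}=M'_{t}/\inf_{s\le t}M'_{s}-1$ (the same Skorokhod argument applies with $A$ replaced by $A+B$, since $X$ vanishes on $H$). So the uniqueness claim needs either an additional normalization of $M$ on $H$ or a genuinely different argument; as written, this step rests on a false premise.
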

\begin{proof}
$(1)\Rightarrow(2)$ Thanks to Theorem 2.1 of \cite{f}, we have that for any locally bounded Borel function $f$, $$\left(f(A_{t})X_{t}-\int_{0}^{A_{t}}{f(x)dx}\right)_{t\geq0}$$ is a $(\Qv,\Pv)-$ local martingale. Take $f(x)=\exp{(-x)}$. It follows that, $(\exp(-A_{t})X_{t}+\exp(-A_{t})-1)_{t\geq0}$ is a $(\Qv,\Pv)-$ local martingale. This implies that $M=(\exp(-A_{t})X_{t}+\exp(-A_{t}))_{t\geq0}$ is a $(\Qv,\Pv)-$ local martingale. It is easy to see that $M$ is a positive process such that $M_{0}=1$. Now, we put $C_{t}=\exp(A_{t})$. It follows that 
$$M_{t}=\frac{(1+X_{t})}{C_{t}}.$$
This is equivalent to write that:
$$\frac{X_{t}}{M_{t}}=C_{t}-\frac{1}{M_{t}}.$$
Remark that $dC_{t}=C_{t}dA_{t}$ is carried by $H\cup\{t\geq0; X_{t}=0\}$. But $X$ vanishes on $H$. Then, $dC_{t}$ is carried by $\{t\geq0; X_{t}=0\}$. This means that $dC_{t}$ is carried by $\{t\geq0; \frac{X_{t}}{M_{t}}=0\}$, since $\forall t\geq0$, $M_{t}\neq0$. So, by applying Skorokhod's Lemma, we obtain that:
$$C_{t}=\sup_{s\leq t}{\left(\frac{1}{M_{s}}\right)}=\frac{1}{I_{t}}.$$
Then, 
$$\frac{X_{t}}{M_{t}}=\frac{1}{I_{t}}-\frac{1}{M_{t}}.$$ 
Consequently, 
$$X_{t}=\frac{M_{t}}{I_{t}}-1.$$
$(2)\Rightarrow(1)$ follows from a direct application of Proposition \ref{dec1}.

\end{proof}

Now, we shall give the main result of this section on the multiplicative decomposition of stochastic processes of the $\sum_{s}(H)$ class.
\subsection{Multiplicative decomposition in \texorpdfstring{$\sum_{s}(H)$}{sigma(s)(H)}}

\begin{theorem}\label{0.2}
Let $X$ be a continuous and non-negative stochastic process which is adapted with respect to the filtration $(\mathcal{F}_{t})_{t\geq0}$. Then, the following are equivalent:
\begin{enumerate}
	\item $X$ is a process of $\sum_{s}(H)$ class;
	\item there exists a positive uniformly integrable $(\Qv,(\mathcal{F}_{t})_{t\geq0})-$ martingale $M$, with $M_{t}=1$ $\forall t\in H$, such that
	
\begin{align}
	X_{t}=\frac{M_{t}}{J_{t}}-1,
\end{align}
where $J_{t}=\inf_{\gamma_{t}\leq s\leq t}{M_{s}}$.
\end{enumerate}
And this $(\Qv,(\mathcal{F}_{t})_{t\geq0})-$ martingale is given by $M_{t}=(1+X_{t})\exp(-A_{t})$.
\end{theorem}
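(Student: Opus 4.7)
The plan is to reduce everything to the classical multiplicative decomposition of Nikeghbali for submartingales of class $\sum$ via the shift $Y\mapsto \widetilde{Y}=Y_{\cdot+\overline{g}}$, in exactly the spirit of the proof of Theorem \ref{qt}. In particular I would introduce the auxiliary probability $\Pv^{'}=\frac{|D_{\infty}|}{\E|D_{\infty}|}\Pv$ and use the Quotient Theorem \cite[Theorem 3.2]{1} as the bridge between the $\Qv$-world and the $\Pv^{'}$-world.

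For the implication $(1)\Rightarrow(2)$, I would first show that $\widetilde{X}=\widetilde{N}+\widetilde{A}$ is a submartingale of class $\sum$ under $(\Pv^{'},(\mathcal{F}_{t+g})_{t\geq 0})$. By the same Quotient Theorem argument used in Theorem \ref{qt} (the point being that $\mathrm{sgn}(D_{t+\overline{g}})$ is a non-zero constant for $t>0$ because $D$ is continuous and does not vanish on $]\overline{g},+\infty[$), the shifted process $\widetilde{N}$ is a u.i.\ $(\Pv^{'},(\mathcal{F}_{t+g}))$-martingale starting at $0$, and $\widetilde{A}$ is, by definition of $\sum_{s}(H)$, a continuous non-decreasing process starting at $0$ whose differential is carried by $\{\widetilde{X}=0\}$. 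Then I would apply the classical multiplicative decomposition \cite[Proposition 2.4]{mult} to $\widetilde{X}$ to obtain a positive continuous u.i.\ $\Pv^{'}$-martingale
\[
\widetilde{M}_{t}=(1+\widetilde{X}_{t})\exp(-\widetilde{A}_{t}),\qquad \widetilde{X}_{t}=\frac{\widetilde{M}_{t}}{\widetilde{I}_{t}}-1,\qquad \widetilde{I}_{t}=\inf_{s\leq t}\widetilde{M}_{s}.
\]
I would then set $M_{t}:=(1+X_{t})\exp(-A_{t})$, which satisfies $M_{\cdot+\overline{g}}=\widetilde{M}$ and, because $X$ and $A$ both vanish on $H$, also satisfies $M_{t}=1$ for all $t\in H$.

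The next step is to go back the other way across the Quotient Theorem, deducing that $M$ is a positive u.i.\ $(\Qv,(\mathcal{F}_{t}))$-martingale from the fact that $\widetilde{M}$ is a u.i.\ $(\Pv^{'},(\mathcal{F}_{t+g}))$-martingale and $M_{t}=1$ on $H$ (this is exactly the operator $\rho$ of Proposition \ref{14} applied in reverse, as in the last paragraph of the proof of Theorem \ref{qt}). Finally I would check the identity $\widetilde{I}_{t}=J_{t+\overline{g}}$: since $D$ is continuous and $\overline{g}=\sup H$, we have $D_{\overline{g}}=0$ and $\gamma_{t+\overline{g}}=\overline{g}$ for every $t\geq 0$, so
\[
J_{t+\overline{g}}=\inf_{\overline{g}\leq s\leq t+\overline{g}}M_{s}=\inf_{0\leq u\leq t}M_{u+\overline{g}}=\widetilde{I}_{t}.
\]
This gives $X_{t}=M_{t}/J_{t}-1$ on $[\overline{g},\infty[$, and on $H$ both sides vanish (since $\gamma_{t}=t$ forces $J_{t}=M_{t}=1$). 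The reverse implication $(2)\Rightarrow(1)$ is completely analogous: shift, verify that $\widetilde{M}$ is a positive u.i.\ $\Pv^{'}$-martingale with $\widetilde{M}_{0}=1$, apply the classical direction (essentially Proposition \ref{dec1} transposed to the $\sum$ setting under $\Pv^{'}$) to conclude that $\widetilde{X}$ is of class $\sum$, and translate the decomposition back using $\rho$ to recover $X=N+A\in\sum_{s}(H)$. Uniqueness of $M$ follows from uniqueness of $\widetilde{M}$ in the classical result together with the requirement $M_{t}=1$ on $H$.

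The main obstacle I anticipate is not the classical decomposition step but rather the bookkeeping between the three measures $\Qv$, $\Pv$, $\Pv^{'}$ and the three filtrations $(\mathcal{F}_{t})$, $(\mathcal{F}^{g}_{t})$, $(\mathcal{F}_{t+g})$: one has to verify carefully that the Quotient Theorem can be applied in both directions (to $\widetilde{N}$ going down and to $\widetilde{M}$ coming back up), that $M_{t}=1$ on $H$ is the precise hypothesis that makes $\rho(\widetilde{M})=M$ well defined on all of $[0,\infty[$, and that the infimum running from $\gamma_{t}$ in the definition of $J_{t}$ matches exactly the running infimum appearing in the classical decomposition of $\widetilde{X}$, which is what the identification $\gamma_{t+\overline{g}}=\overline{g}$ is designed to guarantee.
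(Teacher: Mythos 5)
Your overall strategy --- reduce to the classical multiplicative decomposition of class $\sum$ by shifting to $\widetilde{X}=X_{\cdot+\overline{g}}$, apply \cite[Proposition 2.4]{mult}, and transport the result back through $\rho$ --- is the same as the paper's for $(1)\Rightarrow(2)$ (the paper obtains that $\widetilde{X}$ is of class $\sum$ by citing Proposition 3.2 of \cite{f} rather than re-running the Quotient Theorem argument, but that difference is cosmetic). The genuine problem is the step where you ``go back the other way across the Quotient Theorem'' to conclude that $M_{t}=(1+X_{t})\exp(-A_{t})$ is a uniformly integrable $(\Qv,(\mathcal{F}_{t})_{t\geq0})$-martingale. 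Two things go wrong there. First, Proposition \ref{14} only asserts the existence of the optional process $U=\rho(V)$ vanishing on $H$ with $U_{\overline{g}+\cdot}=V$; it says nothing about $\rho$ preserving the uniformly integrable martingale property, and the converse of the Quotient Theorem --- that $\rho$ of a u.i.\ $(\mathcal{F}_{t+g})$-martingale is a u.i.\ $\Qv$-martingale --- is exactly the statement you need and neither prove nor cite. Second, even at the level of bookkeeping, $\rho(\widetilde{M})$ vanishes on $H$ by construction while $M\equiv1$ on $H$, so the identity $\rho(\widetilde{M})=M$ that you invoke cannot hold; you would have to work with $M-1$ throughout.

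The paper closes precisely this gap by a different device: it applies the balayage formula of Theorem 3.3 of \cite{f} with $f(x)=\exp(-x)$, which directly exhibits $\exp(-A_{t})X_{t}-{}_{\Qv}\int_{0}^{t}\exp(-A_{s})dA_{s}$ as a u.i.\ $\Qv$-martingale, and then computes ${}_{\Qv}\int_{0}^{t}\exp(-A_{s})dA_{s}=1-\exp(-A_{t})$ via Proposition 2.3 of \cite{sak}, so that this martingale is exactly $M_{t}-1$; no reverse Quotient Theorem is needed. The same issue recurs in your sketch of $(2)\Rightarrow(1)$: declaring it ``completely analogous'' hides the fact that you must certify that the martingale part of the decomposition of $X$ is a u.i.\ $\Qv$-martingale with respect to $(\mathcal{F}_{t})_{t\geq0}$, not merely that its shift is a $\Pv^{'}$-martingale. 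The paper does this by running the signed-measure It\^o formula (Theorem 2.2 of \cite{sak}) on $X=M/J-1$ directly, obtaining $X_{t}={}_{\Qv}\int_{0}^{t}\frac{dM_{s}}{J_{s}}+\log\left(\frac{1}{J_{t}}\right)$, where the integral term is a u.i.\ $\Qv$-martingale by Proposition 2.3 of \cite{sak} --- a statement proved there for stochastic integrals, which is what makes the argument close. To salvage your route you must either locate and cite a bidirectional form of the Quotient Theorem in \cite{1} or \cite{sak}, or replace that step by the direct computations above. Your identification $J_{t+\overline{g}}=\inf_{s\leq t}M_{s+\overline{g}}$ and the final continuity/vanishing-on-$H$ argument do match the paper's.
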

\begin{proof}
$(1)\Rightarrow(2)$
Let $X=N+A$ be a process of $\sum_{s}(H)$ class. It follows from Proposition 3.2 of \cite{f} that: $X_{\cdot+\overline{g}}=N_{\cdot+\overline{g}}+A_{\cdot+\overline{g}}$ is a submartingale of $\sum$ class. Then, from \cite[Proposition 2.4]{mult}, we have:
$$X_{t+\overline{g}}=\frac{M^{'}_{t}}{I^{g}_{t}}-1,$$
where $M^{'}_{t}=(1+X_{t+\overline{g}})\exp(-A_{t+\overline{g}})$ and $I^{g}_{t}=\inf_{s\leq t}{M_{s}^{'}}$. Now,  let $Y_{t}=\frac{M_{t}}{J_{t}}-1$,
where $M_{t}=(1+X_{t})\exp(-A_{t})$ and  $J_{t}=\inf_{\gamma_{t}\leq s\leq t}{M_{s}}$. Thanks to Theorem 3.3 of \cite{f}, we have that for any locally bounded Borel function $f$, $f(A_{t})X_{t}-{_{\Qv}}\int_{0}^{t}f(A_{s})dA_{s}$ is a uniformly integrable $\Qv-$ martingale with respect to the filtration $(\mathcal{F}_{t})_{t\geq0}$. By taking $f(x)=\exp(-x)$, one gets that $\exp(-A_{t})X_{t}-{_{\Qv}}\int_{0}^{t}\exp(-A_{s})dA_{s}$ is a uniformly integrable $\Qv-$ martingale with respect to $(\mathcal{F}_{t})_{t\geq0}$. But from Proposition 2.3 of \cite{sak}, we have:
$${_{\Qv}}\int_{0}^{t}\exp(-A_{s})dA_{s}=\int_{\gamma_{t}}^{t}\exp(-A_{s})dA_{s}=\exp(-A_{\gamma_{t}})-\exp(-A_{t}).$$
Since $A$ vanishes on $H$, it follows that:
$${_{\Qv}}\int_{0}^{t}\exp(-A_{s})dA_{s}=1-\exp(-A_{t}).$$
This implies that:
$$\exp(-A_{t})X_{t}-{_{\Qv}}\int_{0}^{t}\exp(-A_{s})dA_{s}=(1+X_{t})\exp(-A_{t})-1=M_{t}-1.$$
Then, $M$ is a positive uniformly integrable $(\Qv,(\mathcal{F}_{t})_{t\geq0})-$ martingale, with $M_{t}=1$ for all $t\in H$. On the other hand, we have
$$Y_{t+\overline{g}}=\frac{M_{t+\overline{g}}}{J_{t+\overline{g}}}-1.$$
But $M_{t+\overline{g}}=M^{'}_{t}$ and $J_{t+\overline{g}}=\inf_{\overline{g}\leq s\leq t+\overline{g}}{M_{s}}=\inf_{s\leq t}{M_{s+\overline{g}}}$. Therefore, $J_{t+\overline{g}}=I^{g}_{t}$. Consequently, $X_{t+\overline{g}}=Y_{t+\overline{g}}$. This implies that $\rho(X_{\cdot+\overline{g}})_{t}=\rho(Y_{\cdot+\overline{g}})_{t}$. But by definition, $X$ vanishes on $H$. Then, $X_{t}=\rho(Y_{\cdot+\overline{g}})_{t}$. Furthermore, we have $\forall t\in H$, $J_{t}=M_{t}=1$. Hence, $Y$ also vanishes on $H$. Consequently, $\forall t\geq0$, $X_{t}=Y_{t}$, since $X$ and $Y$ are continuous. This means that $X_{t}=\frac{M_{t}}{J_{t}}-1$.\\
$(2)\Rightarrow(1)$
Now, we suppose that $X_{t}=\frac{M_{t}}{J_{t}}-1$. It follows, from Theorem 2.2 of \cite{sak}, that:
$$X_{t}={_{\Qv}}\int_{0}^{t}{\frac{dM_{s}}{J_{s}}}+{_{\Qv}}\int_{0}^{t}{M_{s}d\left(\frac{1}{J_{s}}\right)}.$$
Then, by using the fact that $M_{s}=J_{s}+X_{s}J_{s}$  in the last equality, we get:
$$X_{t}={_{\Qv}}\int_{0}^{t}{\frac{dM_{s}}{J_{s}}}+{_{\Qv}}\int_{0}^{t}{J_{s}d\left(\frac{1}{J_{s}}\right)}+{_{\Qv}}\int_{0}^{t}{X_{s}J_{s}d\left(\frac{1}{J_{s}}\right)}.$$
Thanks to Proposition 2.3 of \cite{sak}, we have:
$${_{\Qv}}\int_{0}^{t}{J_{s}d\left(\frac{1}{J_{s}}\right)}=\int_{\gamma_{t}}^{t}{J_{s}d\left(\frac{1}{J_{s}}\right)}=\log\left(\frac{1}{J_{t}}\right)-\log\left(\frac{1}{J_{\gamma_{t}}}\right).$$
Hence,
$${_{\Qv}}\int_{0}^{t}{J_{s}d\left(\frac{1}{J_{s}}\right)}= \log\left(\frac{1}{J_{t}}\right),$$
since $J_{\gamma_{t}}=M_{\gamma_{t}}=1$. But we also have that:
$${_{\Qv}}\int_{0}^{t}{X_{s}J_{s}d\left(\frac{1}{J_{s}}\right)}=\rho\left(\int_{0}^{\cdot}{X_{s+\overline{g}}J_{s+\overline{g}}d\left(\frac{1}{J_{s+\overline{g}}}\right)}\right)_{t}$$
and
$$X_{t+\overline{g}}=\frac{M_{t+\overline{g}}}{J_{t+\overline{g}}}-1=\frac{M_{t+\overline{g}}}{\inf_{s\leq t}{M_{s+\overline{g}}}}-1.$$
Then, it follows that:
$$\frac{X_{t+\overline{g}}}{M_{t+\overline{g}}}=\frac{1}{\inf_{s\leq t}{M_{s+\overline{g}}}}-\frac{1}{M_{t+\overline{g}}}$$
$$\hspace{1.25cm}=\sup_{s\leq t}{\left(\frac{1}{M_{s+\overline{g}}}\right)}-\frac{1}{M_{t+\overline{g}}}.$$
This means that, $d\left(\frac{1}{J_{s+\overline{g}}}\right)$ is carried by $\{s\geq0;X_{s+\overline{g}}=0\}$. Consequently,
$$\int_{0}^{t}{X_{s+\overline{g}}J_{s+\overline{g}}d\left(\frac{1}{J_{s+\overline{g}}}\right)}=0.$$
Then,
$${_{\Qv}}\int_{0}^{t}{X_{s}J_{s}d\left(\frac{1}{J_{s}}\right)}=0.$$
So, it follows that:
$$X_{t}={_{\Qv}}\int_{0}^{t}{\frac{dM_{s}}{J_{s}}}+ \log\left(\frac{1}{J_{t}}\right).$$
Remark that $\log\left(\frac{1}{J_{\cdot}}\right)$ is a non-decreasing process under $\Qv$ vanishing on $H$. Since $M$ is a uniformly integrable  $\Qv-$ martingale, it follows thanks to Proposition 2.3 of \cite{sak} that ${_{\Qv}}\int_{0}^{\cdot}{\frac{dM_{s}}{J_{s}}}$ is a uniformly integrable $\Qv-$ martingale with respect to $(\mathcal{F}_{t})_{t\geq0}$. Therefore, $X$ is a stochastic process of $\sum_{s}(H)$ class.

\end{proof}

In what follows, we give some corollaries of Theorem \ref{0.2}.
\begin{coro}\label{**}
Let $K$ be a continuous, uniformly integrable $(\Qv,(\mathcal{F}_{t})_{t\geq0})-$ martingale which vanishes on $H$. Then, there exists a uniformly integrable $(\Qv,(\mathcal{F}_{t})_{t\geq0})-$ martingale $M$ which is  positive, with $M_{t}=1$ for all $t\in H$, such that \\$\left({_{\Qv}}L_{t}-\log\left(\frac{1}{J_{t}}\right)\right)_{t\geq0}$ is a uniformly integrable $(\Qv,(\mathcal{F}_{t})_{t\geq0})-$ martingale. With,
$J_{t}=\inf_{\gamma_{t}\leq s\leq t}{M_{s}}.$ 
\end{coro}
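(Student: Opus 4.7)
The plan is to reduce the statement to an application of Theorem \ref{0.2} with $X = |K|$, by showing that $|K|$ is a process of class $\sum_{s}(H)$ and then comparing two decompositions of $|K|$.

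First I would apply the Tanaka formula for signed measures (Theorem \ref{ito}) at level $a = 0$ to the $\Qv$-semimartingale $K$. Since $K$ vanishes on $H$ and $\gamma_{t} \in H \cup \{0\}$ with $K_{0}=0$ (as $K$ vanishes on $H$ and $0 \in H$ typically, or at least $K_{\gamma_t}=0$), the ``boundary term'' $|K_{\gamma_{t}}|$ disappears and I obtain
\begin{equation*}
|K_{t}| = {_{\Qv}}\!\int_{0}^{t}{\rm sgn}(K_{s})\,dK_{s} + {_{\Qv}}L_{t}.
\end{equation*}
By Proposition 2 of \cite{chav} and Proposition 2.3 of \cite{sak}, the stochastic integral ${_{\Qv}}\!\int_{0}^{\cdot}{\rm sgn}(K_{s})dK_{s}$ is a uniformly integrable $(\Qv,(\mathcal{F}_{t}))$-martingale. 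Moreover, by the definition of ${_{\Qv}}\!\int$ and of ${_{\Qv}}L$ via the operator $\rho$, both terms vanish on $H$, and $d\widetilde{{_{\Qv}}L}_{t}$ is carried by $\{t:\widetilde{K}_{t}=0\}=\{t:|\widetilde{K}_{t}|=0\}$. This shows that $|K|$ is a non-negative continuous process of class $\sum_{s}(H)$.

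Next I would apply Theorem \ref{0.2} to $X=|K|$. This yields a positive uniformly integrable $(\Qv,(\mathcal{F}_{t}))$-martingale $M$, with $M_{t}=1$ for all $t\in H$, explicitly given by $M_{t}=(1+|K_{t}|)\exp(-{_{\Qv}}L_{t})$, such that
\begin{equation*}
|K_{t}| = \frac{M_{t}}{J_{t}}-1, \qquad J_{t}=\inf_{\gamma_{t}\leq s\leq t}M_{s}.
\end{equation*}
From the computation performed in the implication $(2)\Rightarrow(1)$ of the proof of Theorem \ref{0.2}, this representation comes equipped with the additive decomposition
\begin{equation*}
|K_{t}| = {_{\Qv}}\!\int_{0}^{t}\frac{dM_{s}}{J_{s}} + \log\!\left(\frac{1}{J_{t}}\right),
\end{equation*}
and the integral ${_{\Qv}}\!\int_{0}^{\cdot}dM_{s}/J_{s}$ is a uniformly integrable $(\Qv,(\mathcal{F}_{t}))$-martingale.

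Finally I would equate the two decompositions of $|K|$ obtained above to get
\begin{equation*}
{_{\Qv}}L_{t}-\log\!\left(\frac{1}{J_{t}}\right) = {_{\Qv}}\!\int_{0}^{t}\frac{dM_{s}}{J_{s}} - {_{\Qv}}\!\int_{0}^{t}{\rm sgn}(K_{s})\,dK_{s},
\end{equation*}
which exhibits the left-hand side as a difference of two uniformly integrable $(\Qv,(\mathcal{F}_{t}))$-martingales, hence itself one. The main delicate point is verifying that $|K|$ really belongs to $\sum_{s}(H)$; concretely, that $|K_{\gamma_{t}}|=0$ (which uses the assumption that $K$ vanishes on $H$) and that both pieces of the Tanaka decomposition vanish on $H$ and that the increasing piece ${_{\Qv}}L$ is indeed carried by the zeros of $|\widetilde{K}|$. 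Once this structural check is in place, the rest is simply a matter of invoking Theorem \ref{0.2} and matching the two canonical decompositions.
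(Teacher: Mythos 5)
Your proposal is correct and follows essentially the same route as the paper: establish the Tanaka decomposition $|K_{t}|={_\Qv}\int_{0}^{t}{\rm sgn}(K_{s})\,dK_{s}+{_{\Qv}}L_{t}$ (the paper invokes Proposition 3.2 of \cite{f} for this and for $|K|\in\sum_{s}(H)$, where you verify the class membership directly), then apply Theorem \ref{0.2} to $X=|K|$ to get the second decomposition $|K_{t}|={_\Qv}\int_{0}^{t}\frac{dM_{s}}{J_{s}}+\log\bigl(\frac{1}{J_{t}}\bigr)$, and subtract. The only cosmetic difference is your appeal to Proposition 2 of \cite{chav}, which concerns $(\Qv,\Pv)$-martingales, whereas here $K$ is only assumed a $\Qv$-martingale in the sense of \cite{sak}; the paper sidesteps this by citing Proposition 3.2 of \cite{f} directly.
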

\begin{proof}
From Proposition 3.2 of \cite{f}, we have,
$$|K_{t}|={_\Qv}\int_{0}^{t}{{\rm sgn}(K_{s})dK_{s}}+{_{\Qv}}L_{t}.$$
Then, $|K|$ is a process of $\sum_{s}(H)$ class. And from Theorem \ref{0.2}, it follows that:
$$|K_{t}|=\frac{M_{t}}{J_{t}}-1,$$
where, $M_{t}=(1+|K_{t}|)\exp(-{_{\Qv}}L_{t})$. But from Theorem 2.2 of \cite{sak}, it follows that:
$$|K_{t}|={_\Qv}\int_{0}^{t}{\frac{dM_{s}}{J_{s}}}+\log\left(\frac{1}{J_{t}}\right).$$
Hence,
$${_{\Qv}}L_{t}-\log\left(\frac{1}{J_{t}}\right)={_\Qv}\int_{0}^{t}{\frac{dM_{s}}{J_{s}}}-{_\Qv}\int_{0}^{t}{{\rm sgn}(K_{s})dK_{s}}.$$
Consequently, $\left({_{\Qv}}L_{t}-\log\left(\frac{1}{J_{t}}\right)\right)_{t\geq0}$ is a uniformly integrable $(\Qv,(\mathcal{F}_{t})_{t\geq0})-$ martingale. The proof is now complete.

\end{proof}

\begin{coro}\label{***}
Let $K$ be a continuous, uniformly integrable $(\Qv,(\mathcal{F}_{t})_{t\geq0})-$ martingale which vanishes on $H$. Then, there exists a  positive, uniformly integrable $(\Qv,(\mathcal{F}_{t})_{t\geq0})-$ martingale $M$, with $M_{t}=1$ for all $t\in H$ such that:
\begin{align}
	{_{\Qv}}L_{t}=\log\left(\frac{1}{J_{t}}\right).
\end{align}
Where, $J_{t}=\inf_{\gamma_{t}\leq s\leq t}{M_{s}}$.
\end{coro}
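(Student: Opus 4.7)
The plan is to apply Theorem \ref{0.2} directly to $X=|K|$ and to read off $J_t$ explicitly from the multiplicative decomposition. First I would verify that $|K|$ belongs to the class $\sum_{s}(H)$. Since $K$ is a continuous uniformly integrable $(\Qv,(\mathcal{F}_t))$-martingale which vanishes on $H$, the Tanaka-type formula recalled in Corollary \ref{**} (Proposition 3.2 of \cite{f}) gives
$$|K_t| = {_\Qv}\int_0^t \mathrm{sgn}(K_s)\,dK_s + {_{\Qv}}L_t,$$
so that $|K|$ decomposes as $N+A$ with $N={_\Qv}\int_0^{\cdot}\mathrm{sgn}(K_s)\,dK_s$ a uniformly integrable $\Qv$-martingale vanishing on $H$ and $A={_{\Qv}}L$ continuous on $]\overline{g},\infty[$, null on $H$, and whose shift is nondecreasing with $d\widetilde{A}$ carried by $\{\widetilde{|K|}=0\}$.

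Next I would invoke Theorem \ref{0.2}. It produces a positive uniformly integrable $(\Qv,(\mathcal{F}_t))$-martingale $M$ with $M_t=1$ on $H$ and
$$|K_t|=\frac{M_t}{J_t}-1,\qquad J_t=\inf_{\gamma_t\le s\le t}M_s,$$
together with the explicit formula $M_t=(1+|K_t|)\exp(-{_{\Qv}}L_t)$.

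The conclusion is then immediate: solving the displayed identity for $J_t$ gives
$$J_t=\frac{M_t}{1+|K_t|}=\exp\bigl(-{_{\Qv}}L_t\bigr),$$
hence $\log(1/J_t)={_{\Qv}}L_t$, which is exactly the claim. There is no real obstacle here beyond recognizing the correct input to Theorem \ref{0.2}; once the explicit form of $M$ provided by that theorem is available, the identification of $J_t$ is a purely algebraic manipulation, and this upgrades the martingale statement of Corollary \ref{**} to a pointwise equality.
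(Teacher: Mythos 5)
Your proof is correct, but it takes a genuinely shorter route than the paper. The paper first proves Corollary \ref{**} (that ${_{\Qv}}L_{t}-\log(1/J_{t})$ is a uniformly integrable $(\Qv,(\mathcal{F}_{t}))$-martingale, obtained by comparing the Tanaka decomposition of $|K|$ with the It\^o expansion $|K_{t}|={_\Qv}\int_{0}^{t}\frac{dM_{s}}{J_{s}}+\log(1/J_{t})$), and then upgrades this to the pointwise identity by multiplying by $D$, applying the Quotient Theorem to shift to the filtration $(\mathcal{F}_{t+g})$, observing that the shifted process is a continuous finite-variation martingale vanishing at zero and hence identically zero, and finally transporting back through $\rho$ using the vanishing on $H$. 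You instead bypass all of this by noting that Theorem \ref{0.2} delivers \emph{both} the identity $|K_{t}|=M_{t}/J_{t}-1$ \emph{and} the explicit formula $M_{t}=(1+|K_{t}|)\exp(-{_{\Qv}}L_{t})$ for the same $M$, so that $J_{t}=M_{t}/(1+|K_{t}|)=\exp(-{_{\Qv}}L_{t})$ follows by pure algebra (the denominator $1+|K_{t}|\geq 1$ is never zero). Your input verification (that $|K|\in\sum_{s}(H)$ via the signed-measure Tanaka formula) matches the paper's. What your argument buys is brevity and the observation that the identity $\log(1/J_{t})=A_{t}$ is already implicit in the statement of Theorem \ref{0.2} for \emph{any} process of class $\sum_{s}(H)$ --- which incidentally renders the martingale statement of Corollary \ref{**} a triviality (the difference is identically zero). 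The paper's longer argument is the one you would need if only the martingale property of $M$, and not its closed form, were available.
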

\begin{proof}
Thanks to Corollary \ref{**}, there exists an $(\mathcal{F}_{t})_{t\geq0}-$ adapted, continuous, uniformly integrable $\Qv-$ martingale $M$ such that $M_{t}=1$ for all $t\in H$ and for which the process $$\left({_{\Qv}}L_{t}-\log\left(\frac{1}{J_{t}}\right)\right)_{t\geq0}$$ is a uniformly integrable $(\Qv,(\mathcal{F}_{t})_{t\geq0})-$ martingale. This means that $$\left(D_{t}\left({_{\Qv}}L_{t}-\log\left(\frac{1}{J_{t}}\right)\right)\right)_{t\geq0}$$ is an $(\mathcal{F}_{t})_{t\geq0}-$ adapted, uniformly integrable $\Pv-$ martingale. By an application of Quotient Theorem, it follows that $\left({_{\Qv}}L_{t+\overline{g}}-\log\left(\frac{1}{J_{t+\overline{g}}}\right)\right)_{t\geq0}$ is a uniformly integrable $(\Pv,(\mathcal{F}_{t+g})_{t\geq0})-$ martingale. But
$${_{\Qv}}L_{t+\overline{g}}-\log\left(\frac{1}{J_{t+\overline{g}}}\right)=L_{t}-\log\left(\frac{1}{\inf_{\overline{g}\leq s\leq t+\overline{g}}{M_{s}}}\right),$$
$L$ being the local time of the $\Pv-$ martingale $(K_{t+\overline{g}})_{t\geq0}$ which is $(\mathcal{F}_{t+g})_{t\geq0}-$ adapted. Remark that $\left(L_{t}-\log\left(\frac{1}{\inf_{\overline{g}\leq s\leq t+\overline{g}}{M_{s}}}\right)\right)_{t\geq0}$ is also a continuous process with finite variations which vanishes at zero. Consequently,
$$L_{t}-\log\left(\frac{1}{\inf_{\overline{g}\leq s\leq t+\overline{g}}{M_{s}}}\right)=0.$$
This means that:
$$L_{t}=\log\left(\frac{1}{\inf_{\overline{g}\leq s\leq t+\overline{g}}{M_{s}}}\right).$$
Then, it follows that:
$$\rho(L_{\cdot})_{t}=\rho\left(\log\left(\frac{1}{\inf_{\overline{g}\leq s\leq \cdot+\overline{g}}{M_{s}}}\right)\right)_{t}.$$
Hence,
$${_{\Qv}}L_{t}=\rho\left(\log\left(\frac{1}{\inf_{\overline{g}\leq s\leq \cdot+\overline{g}}{M_{s}}}\right)\right)_{t}.$$
Since $J_{t}=M_{t}=1$ on $H$, $\log\left(\frac{1}{J_{t}}\right)=0$  for all $t\in H$. Consequently, $\forall t\geq0$,
$${_{\Qv}}L_{t}=\log\left(\frac{1}{J_{t}}\right).$$
This ends the proof.

\end{proof}

\begin{coro}
Let $K$ be a continuous uniformly integrable $(\Qv,\Pv)-$ martingale which vanishes on $H$ and is adapted to the filtration $(\mathcal{F}_{t})_{t\geq0}$. Then, there exists a uniformly integrable $(\Qv,(\mathcal{F}_{t})_{t\geq0})-$ martingale $M$ which is positive, with $M_{t}=1$ $\forall t\in H$, such that $\left(L_{t}-\log\left(\frac{1}{J_{t}}\right)\right)_{t\geq0}$ is a uniformly integrable  $(\Qv,(\mathcal{F}_{t})_{t\geq0})-$ martingale. Here, $L$ is the local time of $K$ at zero and $J_{t}=\inf_{\gamma_{t}\leq s\leq t}{M_{s}}$.
\end{coro}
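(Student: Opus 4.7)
The strategy is to exhibit $L_{t}-\log(1/J_{t})$ as the difference of two uniformly integrable $(\Qv,(\mathcal{F}_{t}))$-martingales already identified in the preceding results, with the same $M$ supplied by Corollary~\ref{**}.

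First, I would verify that $K$ satisfies the hypotheses of both Theorem~\ref{0.1} and Corollary~\ref{**}. The hypothesis that $K$ is a uniformly integrable $(\Qv,\Pv)$-martingale, combined with Proposition~\ref{compa}, guarantees that $K$ is also a uniformly integrable $\Qv$-martingale in the Beghdadi-Sakrani sense, so Corollary~\ref{**} applies; and since $K$ vanishes on $H$ (which in the paper's convention contains $0$, whence $K_{0}=0$), Theorem~\ref{0.1} applies as well.

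Next I would invoke Corollary~\ref{**} to produce a positive, uniformly integrable $(\Qv,(\mathcal{F}_{t}))$-martingale $M$ with $M_{t}=1$ on $H$, such that, with $J_{t}=\inf_{\gamma_{t}\le s\le t}M_{s}$, the process $\bigl({_{\Qv}}L_{t}-\log(1/J_{t})\bigr)_{t\ge 0}$ is a uniformly integrable $(\Qv,(\mathcal{F}_{t}))$-martingale. Separately, Theorem~\ref{0.1} gives that $\bigl({_{\Qv}}L_{t}-L_{t}\bigr)_{t\ge 0}$ is a uniformly integrable $(\Qv,(\mathcal{F}_{t}))$-martingale (this is the only place where the underlying $\Pv$-semimartingale structure of $K$, and hence the existence of the classical local time $L$, enters).

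Finally, the identity
\begin{equation*}
L_{t}-\log\!\left(\frac{1}{J_{t}}\right)=\left[{_{\Qv}}L_{t}-\log\!\left(\frac{1}{J_{t}}\right)\right]-\bigl[{_{\Qv}}L_{t}-L_{t}\bigr]
\end{equation*}
exhibits the desired process as a difference of two uniformly integrable $(\Qv,(\mathcal{F}_{t}))$-martingales, and the conclusion follows by linearity. There is essentially no obstacle here; the substantive work was already done in Theorem~\ref{0.1} and Corollary~\ref{**}, and this corollary reduces to a bookkeeping subtraction. The only subtle point to flag is the compatibility between the two notions of $\Qv$-martingale (Definition~\ref{d1} versus the Beghdadi-Sakrani definition), which is precisely what Proposition~\ref{compa} supplies.
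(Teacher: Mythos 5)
Your proof is correct and follows essentially the same route as the paper's: both hinge on Theorem \ref{0.1} together with the multiplicative-decomposition corollaries for $|K|$. The only (immaterial) difference is that you invoke Corollary \ref{**} and present the target as a difference of two uniformly integrable $(\Qv,(\mathcal{F}_{t})_{t\geq0})$-martingales, whereas the paper uses the sharper identity ${_{\Qv}}L_{t}=\log\left(\frac{1}{J_{t}}\right)$ from Corollary \ref{***}, under which the subtraction collapses to Theorem \ref{0.1} alone.
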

\begin{proof}
Thanks to Theorem \ref{0.1}, $(L_{t}-{_{\Qv}}L_{t})_{t\geq0}$ is a uniformly integrable  $\Qv-$ martingale with respect to the filtration $(\mathcal{F}_{t})_{t\geq0}$. But, from Corollary \ref{***}, there exists a uniformly integrable $(\Qv,(\mathcal{F}_{t})_{t\geq0})-$ martingale $M$ which is positive such that $M_{t}=1$, $\forall t\in H$, and ${_{\Qv}}L_{t}=\log\left(\frac{1}{J_{t}}\right)$. Consequently, $\left(L_{t}-\log\left(\frac{1}{J_{t}}\right)\right)_{t\geq0}$ is a uniformly integrable $(\Qv,(\mathcal{F}_{t})_{t\geq0})-$ martingale.

\end{proof}

\section{Representation results in \texorpdfstring{$\sum(H)$, $\sum_{s}(H)$}{sigma(H) and sigma(s)(H)}}

In this section, we establish some representation formulas of stochastic processes of the  $\sum(H)$ and $\sum_{s}(H)$  classes. These results are similar to the ones obtained for stochastic processes of the class $\sum$ in \cite{pat}. More precisely, we give some formulas that allow us to recover any stochastic process of $\sum(H)$ class (or $\sum_{s}(H)$ class) from its final value and the last time it visited the origin. In other words, the goal in this section is to show that
\begin{equation}
	X_{t}=\E[X_{\infty}1_{\{\Gamma\leq t\}}|\mathcal{F}_{t}]
\end{equation}
where $\Gamma=\sup{\{t\geq g; X_{t}=0\}}$ and $X$ is a stochastic process of $\sum(H)$ class or $\sum_{s}(H)$ class. It is difficult to obtain it directly because here, $X$ is not necessarily a submartingale with respect to the probability measure $\Pv$. The case where $X$ is taken in $\sum_{s}(H)$, $X$ is not even a semimartingale. Thus, to overcome this difficulty, we will use the following remark:
\begin{rem}
Let $\Gamma=\sup{\{t\geq g; X_{t}=0\}}$. One has
$$\Gamma=g+L$$
where $g=\sup{\{t\geq0; D_{t}=0\}}$ and $L=\sup{\{t\geq0; X_{t+g}=0\}}$.
\end{rem}

Note that throughout this section, we shall assume that $g<\infty$. Remark that this assumption implies that $\overline{g}=g$.

\subsection{Representation formulas of processes of class \texorpdfstring{$\sum(H)$}{sigma(H)}} 

The following lemma gives us a relation between the $\sum(H)$ class and the class $\sum$. This lemma allows us to prove the main results of this subsection.
\begin{lem}\label{l1}
Let $X$ be an $(\mathcal{F}_{t})-$ adapted process of $\sum(H)$ class which vanishes on $H$ such that $dA_{t+ g}$ is carried by $\{t:X_{t+ g}=0\}$. Then, $X_{\cdot+ g}$ is a submartingale of the class $\sum$ with respect to $(\mathcal{F}_{t+g})_{t\geq0}$.
\end{lem}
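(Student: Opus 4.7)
The plan is to exhibit an explicit decomposition of the shifted process $X_{\cdot+g}$ that meets the three axioms of class $\sum$ on the filtration $(\mathcal{F}_{t+g})_{t\geq 0}$. Since $D$ is continuous and $g=\sup H<\infty$, the set $H$ is closed and $g\in H$; in particular $D_g=0$, and the assumption that $X$ vanishes on $H$ gives $X_g=0$. Writing $X=N+A$, this forces $N_g+A_g=0$, and I will define
\[
\tilde N_t := N_{t+g}+A_g, \qquad \tilde A_t := A_{t+g}-A_g,
\]
so that $X_{t+g}=\tilde N_t+\tilde A_t$ with $\tilde N_0=\tilde A_0=0$.

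The easy part is to verify the two pathwise properties. The process $\tilde A$ is continuous, non-decreasing, adapted to $(\mathcal{F}_{t+g})$ and starts at zero, since $A$ has these properties. For the support condition, the hypothesis that $X$ vanishes on $H$ collapses $\{X=0\}\cup H$ to $\{X=0\}$, so $dA$ is carried by $\{X=0\}$; after the time shift, $d\tilde A_t=dA_{g+t}$ is then carried by $\{t\geq 0 : X_{t+g}=0\}$, which is exactly the zero set of the shifted process and therefore matches the support condition stated in the lemma.

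The hard step, and the main obstacle, is to show that $\tilde N$ is a $\Pv$-local martingale with respect to $(\mathcal{F}_{t+g})_{t\geq 0}$. By definition of class $\sum(H)$, $N$ is a $(\Qv,\Pv)$-local martingale, so $DN$ is a $\Pv$-local martingale on the original filtration, and it vanishes on $H$ since $D$ does. This is exactly the input required by the Quotient Theorem of Azéma--Yor \cite[Theorem 3.2]{1}, used in the same spirit as in the proofs of Lemma \ref{lem} and Theorem \ref{qt}; continuity of $D$ combined with $D\neq 0$ on $(g,\infty)$ makes $\mathrm{sgn}(D_{\cdot+g})$ constant, so one recovers $N_{\cdot+g}$ (and hence $\tilde N$, which differs from it only by the constant $A_g$) as a $\Pv$-local martingale on the shifted filtration.

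Putting the three checks together, $X_{\cdot+g}=\tilde N+\tilde A$ satisfies the defining axioms of class $\sum$ on $(\mathcal{F}_{t+g})_{t\geq 0}$, and since $\tilde A$ is non-decreasing this automatically makes $X_{\cdot+g}$ a submartingale, concluding the argument.
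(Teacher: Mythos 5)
Your proposal is correct and follows essentially the same route as the paper: the decomposition $X_{\cdot+g}=(N_{\cdot+g}-N_g)+(A_{\cdot+g}-A_g)$ is identical to yours (since $X_g=0$ gives $N_g=-A_g$, your $\tilde N_t=N_{t+g}+A_g$ is exactly $N_{t+g}-N_g$), and the key step of transferring the local martingale property of $N$ to the shifted filtration via an Az\'ema--Yor result is the same, the only cosmetic difference being that the paper invokes Theorem 4.2.1 of \cite{1} directly rather than the Quotient Theorem with the $\mathrm{sgn}(D_{\cdot+g})$ argument used in Theorem \ref{qt}.
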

\begin{proof}
We have $X_{t+ g}=N_{t+ g}+A_{t+ g}$. Since $X$ vanishes on $H$, one has: $X_{ g}=0$. Hence, it follows that:
$$X_{t+ g}=X_{t+ g}-X_{ g}=(N_{t+ g}-N_{ g})+(A_{t+ g}-A_{ g}).$$
But $A^{'}_{\cdot}=A_{\cdot+ g}-A_{ g}$ is an increasing and nonnegative process with $A^{'}_{0}=0$. Furthermore, $dA^{'}_{t}=dA_{t+ g}$ is carried by $\{t:X_{t+ g}=0\}$. Since $N$ is a $(\Qv,\Pv)-$ local martingale, we obtain, thanks to Theorem 4.2.1 of \cite{1}, that $N_{\cdot+ g}$ is a $(\Pv,(\mathcal{F}_{t+g}))-$ local martingale. Thus, $N_{\cdot+ g}-N_{ g}$ is a $(\Pv,(\mathcal{F}_{t+g})_{t\geq0})-$ local martingale which vanishes at zero. Consequently, $X_{\cdot+ g}$ is a submartingale of $\sum$ class.

\end{proof}

Now, we shall give the main results of this subsection.
\begin{theorem}\label{t1}
Let $X$ be an $(\mathcal{F}_{t})_{t\geq0}-$ adapted process of class $\big(\sum(H)\big)$ which vanishes on $H$, such that $dA_{t+ g}$ is carried by $\{t:X_{t+ g}=0\}$. Let $f:\R\longrightarrow\R$ be a Borel function and $\Gamma=\sup\{t\geq g; X_{t}=0\}$. Assume that the following condition holds:
\begin{itemize}
	\item $\left(N_{t}-N_{\gamma_{t}}\right)_{t\geq0}$ is a uniformly integrable $\Qv-$ martingale.
\end{itemize}
Then, there exist random variables $X_{\infty}$, $N_{\infty}$, $A_{\infty}$ such that $X_{t+ g}\longrightarrow X_{\infty}$, \\$\left(N_{t+ g}-N_{ g}\right)\longrightarrow N_{\infty}$, $(A_{t+ g}-A_{ g})\longrightarrow A_{\infty}$ as $t$ goes to $\infty$ almost everywhere on $\{L<\infty\}$. Moreover, for all stopping time $T<\infty$:
\begin{equation}
f(A_{T}-A_{\gamma_{T}})X_{T}=\E[f(A_{\infty})X_{\infty}1_{\{\Gamma\leq T\}}|\mathcal{F}_{T}].	
\end{equation}
In particular, for all stopping time $T<\infty$:
\begin{equation}
	X_{T}=\E[X_{\infty}1_{\{\Gamma\leq T\}}|\mathcal{F}_{T}].
\end{equation}
\end{theorem}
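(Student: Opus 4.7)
The plan is to reduce the statement to the classical representation theorem for submartingales of class $\sum$ from \cite{pat}, by exploiting the time-shifted process $Y_t := X_{t+g}$.

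First, I would apply Lemma \ref{l1} to obtain that $Y$ decomposes as $Y_t = N'_t + A'_t$, where $N'_t = N_{t+g} - N_g$ and $A'_t = A_{t+g} - A_g$, and that $Y$ is a submartingale of class $\sum$ with respect to $(\mathcal{F}_{t+g})_{t\geq 0}$. Since $\gamma_t = g$ for every $t \geq g$ (using the standing assumption $g < \infty$), the hypothesis that $(N_t - N_{\gamma_t})_{t\geq 0}$ is a uniformly integrable $\Qv$-martingale gives, after multiplying by $D$, that $DN'$ is a uniformly integrable $\Pv$-martingale vanishing on $H$. The Quotient Theorem (\cite[Theorem 3.2]{1}) then upgrades $N'$ to a uniformly integrable $\Pv$-martingale in $(\mathcal{F}_{t+g})_{t\geq 0}$. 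In particular $N'_t \to N_\infty$ almost surely with $N_\infty \in L^1$, monotone convergence yields $A'_t \nearrow A_\infty$, and hence $Y_t \to X_\infty := N_\infty + A_\infty$ on $\{L < \infty\}$.

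Next, I would invoke the representation theorem of \cite{pat} applied to the class-$\sum$ submartingale $Y$: for any $(\mathcal{F}_{t+g})$-stopping time $S$ and any Borel $f$,
$$f(A'_S)\, Y_S \,=\, \E\bigl[f(A'_\infty)\, Y_\infty\, 1_{\{L \leq S\}} \bigm| \mathcal{F}_{S+g}\bigr].$$
Given an $(\mathcal{F}_t)$-stopping time $T < \infty$, I would set $S := (T - g)^+$, which is a stopping time of $(\mathcal{F}_{t+g})$. On the event $\{T \geq g\}$ one has $T = S + g$, $\gamma_T = g$ and therefore $A_T - A_{\gamma_T} = A'_S$, while $\{\Gamma \leq T\} = \{L \leq S\}$ from the decomposition $\Gamma = g + L$; combined with the standard identification of $\mathcal{F}_{S+g}$ with $\mathcal{F}_T$ on this event, the displayed formula becomes exactly the claim. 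On $\{T < g\}$ the indicator $1_{\{\Gamma \leq T\}}$ vanishes (since $\Gamma \geq g > T$), and the corresponding left-hand side is then shown to vanish as well using $X_g = 0$ together with the localized support hypothesis on $dA$. Specializing $f \equiv 1$ recovers the second identity.

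The main obstacle will be the careful bookkeeping between the filtrations $(\mathcal{F}_t)$ and $(\mathcal{F}_{t+g})$: verifying that $S = (T-g)^+$ is indeed an $(\mathcal{F}_{t+g})$-stopping time, identifying $\mathcal{F}_{S+g}$ with $\mathcal{F}_T$ on $\{T \geq g\}$, and handling the complementary event $\{T < g\}$ in a way consistent with the shifted representation. All the genuine analytic content is packaged in Lemma \ref{l1} and in the already-known class-$\sum$ representation of \cite{pat}.
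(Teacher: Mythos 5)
Your first two steps coincide exactly with the paper's: Lemma \ref{l1} gives that $X_{\cdot+g}$ is of class $\sum$ with decomposition $(N_{\cdot+g}-N_{g})+(A_{\cdot+g}-A_{g})$, the hypothesis on $(N_{t}-N_{\gamma_{t}})$ is converted via $D$ and the Quotient Theorem into uniform integrability of the shifted martingale part, and Corollary 3.2 of \cite{pat} then yields
$f(A_{T+g}-A_{g})X_{T+g}=\E[f(A_{\infty})X_{\infty}1_{\{L\leq T\}}|\mathcal{F}_{T+g}]$. The divergence, and the gap, is in how you propose to descend from the shifted filtration back to $(\mathcal{F}_{t})$. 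Your plan is to substitute $S=(T-g)^{+}$ and to argue separately on $\{T\geq g\}$ and $\{T<g\}$, identifying $\mathcal{F}_{S+g}$ with $\mathcal{F}_{T}$ on the first event. But $g$ is an honest time, not an $(\mathcal{F}_{t})$-stopping time, so the events $\{T\geq g\}$ and $\{T<g\}$ are not $\mathcal{F}_{T}$-measurable (whether $g\leq T$ depends on whether $D$ returns to zero after $T$); you therefore cannot prove an identity of $\mathcal{F}_{T}$-measurable random variables by verifying it separately on these two events. Moreover, $\mathcal{F}_{S+g}$ here means $\mathcal{F}^{g}_{S+\overline{g}}$, a $\sigma$-field in the enlarged filtration which, even restricted to $\{T\geq g\}$, is strictly larger than $\mathcal{F}_{T}$ (it knows the value of $g$), so the ``standard identification'' you invoke is not available, and an extra, nontrivial projection onto $\mathcal{F}_{T}$ would still be needed.

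The paper sidesteps exactly this difficulty with the operator $\rho$ of Proposition \ref{14} (Az\'ema--Yor): to any $(\mathcal{F}_{g+t})$-optional process $V$ there corresponds a \emph{unique} $(\mathcal{F}_{t})$-optional process $U$ vanishing on $H$ with $U_{\overline{g}+t}=V_{t}$. One checks that $Y_{t}:=f(A_{t}-A_{\gamma_{t}})X_{t}$ and $Z_{t}:=\E[f(A_{\infty})X_{\infty}1_{\{g+L\leq t\}}|\mathcal{F}_{t}]$ are both $(\mathcal{F}_{t})$-optional, vanish on $H$, and have $g$-shifts equal to the two sides of the \cite{pat} identity; uniqueness of $\rho$ then forces $Y\equiv Z$, with no comparison of $\sigma$-fields at a fixed stopping time and no localization on events involving $g$. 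To repair your argument you should replace the stopping-time substitution by this $\rho$-uniqueness step (or an equivalent balayage/section argument); as written, the transfer-back step does not go through.
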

\begin{proof}
Thanks to Lemma \ref{l1}, $X_{\cdot+ g}=(N_{\cdot+ g}-N_{ g})+(A_{\cdot+ g}-A_{ g})$ is a submartingale of $\sum$ class, with $(A_{\cdot+ g}-A_{ g})$ its nondecreasing part and $(N_{\cdot+ g}-N_{ g})$ its martingale part. Since $\left(N_{t}-N_{\gamma_{t}}\right)_{t\geq0}$ is a uniformly integrable $(\Qv,(\mathcal{F}_{t})_{t\geq0})-$ martingale, the process $\left(D_{t}(N_{t}-N_{\gamma_{t}})\right)_{t\geq0}$ is a uniformly integrable $(\Pv,(\mathcal{F}_{t})_{t\geq0})-$ martingale. Hence, proceeding in the same way as in the proof of Theorem \ref{qt}, we get that $N_{\cdot+ g}-N_{ g}$ is a uniformly integrable $(\mathcal{F}_{t+g})_{t\geq0}-$ martingale.
Therefore, according to Corollary 3.2 of \cite{pat}, one has:
$$f(A_{T+ g}-A_{ g})X_{T+ g}=\E[f(A_{\infty})X_{\infty}1_{\{L\leq T\}}|\mathcal{F}_{T+g}]$$
for every stopping time $T<\infty$. It follows that:
$$\rho(f(A_{\cdot+ g}-A_{ g})X_{\cdot+ g})_{T}=\rho(\E[f(A_{\infty})X_{\infty}1_{\{L\leq \cdot\}}|\mathcal{F}_{\cdot+g}])_{T}.$$
Let $Y_{t}=f(A_{t}-A_{\gamma_{t}})X_{t}$. The process $Y$ vanishes on $H$ because $X$ is null on $H$. Furthermore, $\forall$ $t\geq0$,
$$Y_{t+ g}=f(A_{t+ g}-A_{ g})X_{t+ g}.$$
Then,
$$\rho(f(A_{\cdot+ g}-A_{ g})X_{\cdot+ g})_{T}=f(A_{T}-A_{\gamma_{T}})X_{T}$$
for every stopping time $T$. Now, let $Z_{t}=\E[f(A_{\infty})X_{\infty}1_{\{ g+L\leq t\}}|\mathcal{F}_{t}]$. Remark that the process $Z$ also vanishes on $H$ and $\forall$ $t\geq0$,
$$Z_{t+ g}=\E[f(A_{\infty})X_{\infty}1_{\{L\leq t\}}|\mathcal{F}_{t+g}].$$
Hence, we obtain:
$$Z_{t}=\rho(\E[f(A_{\infty})X_{\infty}1_{\{L\leq \cdot\}}|\mathcal{F}_{\cdot+g}])_{t}.$$
Consequently, for every stopping time $T$,
$$f(A_{T}-A_{\gamma_{T}})X_{T}=\E[f(A_{\infty})X_{\infty}1_{\{ g+L\leq T\}}|\mathcal{F}_{T}].$$
In particular, when $f\equiv1$, we obtain that:
$$X_{T}=\E[X_{\infty}1_{\{ g+L\leq T\}}|\mathcal{F}_{T}].$$
This completes the proof.

\end{proof}

\begin{theorem}
Let $X=N+A$ be a process of class $\big(\sum(H)\big)$ which vanishes on $H$, such that $dA_{t+ g}$ is carried by $\{t:X_{t+ g}=0\}$. Let $f:\R\longrightarrow\R$ be a Borel function and $\Gamma=\sup\{t\geq g; X_{t}=0\}$. Then, the following assertions hold:
\begin{enumerate}
	\item If $X_{\cdot+ g}$ is of class $\Dv$, then there exist integrable random variables $X_{\infty}$, $N_{\infty}$, $A_{\infty}$ such that $X_{t+ g}\longrightarrow X_{\infty}$, $(N_{t+ g}-N_{ g})\longrightarrow N_{\infty}$, $(A_{t+ g}-A_{ g})\longrightarrow A_{\infty}$ as $t\rightarrow\infty$ almost surely as well as in $L^{1}$ and for every stopping time $T$,
\begin{equation}
f(A_{T}-A_{\gamma_{T}})X_{T}=\E[f(A_{\infty})X_{\infty}1_{\{\Gamma\leq T\}}|\mathcal{F}_{T}].	
\end{equation}
	\item If $q:\R\longrightarrow \R-\{0\}$ is a Borel function such that $q(A_{\cdot+ g}-A_{ g})X_{\cdot+ g}$ is of class $\Dv$, then there exist random variables $X_{\infty}$, $N_{\infty}$, $A_{\infty}$ such that $X_{t+ g}\longrightarrow X_{\infty}$, $(N_{t+ g}-N_{ g})\longrightarrow N_{\infty}$, $(A_{t+ g}-A_{ g})\longrightarrow A_{\infty}$ as $t\rightarrow\infty$ almost everywhere on $\{L<\infty\}$. Moreover, for all stopping time $T<\infty$,
	\begin{equation}
	f(A_{T}-A_{\gamma_{T}})X_{T}=\E[f(A_{\infty})X_{\infty}1_{\{\Gamma\leq T\}}|\mathcal{F}_{T}]. 
	\end{equation}
\end{enumerate}
In particular, in both cases,  one has that for all stopping time $T<\infty$,
\begin{equation}
X_{T}=\E[X_{\infty}1_{\{\Gamma\leq T\}}|\mathcal{F}_{T}].
\end{equation}
\end{theorem}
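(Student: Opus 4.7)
The proof will mirror the argument already given for Theorem \ref{t1}, replacing the uniform integrability hypothesis on $(N_t-N_{\gamma_t})$ by the two alternative class $\Dv$ hypotheses and invoking the corresponding variants of the class $\sum$ representation theorem from \cite{pat}. The basic idea is to shift the process by $g$, apply the known representation results for submartingales of class $\sum$, and then push the resulting identity back through the $\rho$ map of Proposition \ref{14}.

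First, by Lemma \ref{l1}, the shifted process $X_{\cdot+g}=(N_{\cdot+g}-N_g)+(A_{\cdot+g}-A_g)$ is a submartingale of class $\sum$ with respect to the filtration $(\mathcal{F}_{t+g})_{t\geq 0}$, with nondecreasing part $A_{\cdot+g}-A_g$ and local martingale part $N_{\cdot+g}-N_g$. In case (1), the hypothesis that $X_{\cdot+g}$ is of class $\Dv$ is precisely the class $\sum$ hypothesis used in \cite{pat} to ensure almost sure and $L^1$ convergence of the submartingale, its martingale part and its increasing part toward integrable limits $X_\infty$, $N_\infty$, $A_\infty$, together with the representation
\begin{equation*}
f(A_{T+g}-A_g)X_{T+g}=\E\bigl[f(A_\infty)X_\infty\mathbf{1}_{\{L\leq T\}}\bigm|\mathcal{F}_{T+g}\bigr]
\end{equation*}
for every stopping time $T$ of $(\mathcal{F}_{t+g})$, where $L=\sup\{t\geq 0:X_{t+g}=0\}$. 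In case (2), the hypothesis that $q(A_{\cdot+g}-A_g)X_{\cdot+g}$ is of class $\Dv$ is the corresponding weaker hypothesis in \cite{pat} that yields the same identity for every finite stopping time $T$, with convergence of $X_{t+g}$, $N_{t+g}-N_g$ and $A_{t+g}-A_g$ holding almost everywhere on $\{L<\infty\}$.

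Having the representation in the shifted filtration, I would transfer it back to $(\mathcal{F}_t)$ exactly as in the proof of Theorem \ref{t1}. Setting $Y_t=f(A_t-A_{\gamma_t})X_t$, one has $Y_{t+g}=f(A_{t+g}-A_g)X_{t+g}$, and $Y$ vanishes on $H$ because $X$ does; hence by the uniqueness clause of Proposition \ref{14}, $\rho(f(A_{\cdot+g}-A_g)X_{\cdot+g})_T=f(A_T-A_{\gamma_T})X_T$ for every stopping time $T$. Similarly, setting $Z_t=\E[f(A_\infty)X_\infty\mathbf{1}_{\{g+L\leq t\}}\mid\mathcal{F}_t]$, one checks that $Z$ vanishes on $H$ (since $g+L>t$ for $t\in H$, the indicator is zero there) and that $Z_{t+g}=\E[f(A_\infty)X_\infty\mathbf{1}_{\{L\leq t\}}\mid\mathcal{F}_{t+g}]$, so $\rho$ identifies $Z$ as the $\rho$-image of the shifted conditional expectation. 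Applying $\rho$ to both sides of the shifted identity and using $\Gamma=g+L$ from the remark preceding the subsection then yields
\begin{equation*}
f(A_T-A_{\gamma_T})X_T=\E\bigl[f(A_\infty)X_\infty\mathbf{1}_{\{\Gamma\leq T\}}\bigm|\mathcal{F}_T\bigr]
\end{equation*}
for every (finite, in case (2)) stopping time $T$. Taking $f\equiv 1$ gives the final assertion $X_T=\E[X_\infty\mathbf{1}_{\{\Gamma\leq T\}}\mid\mathcal{F}_T]$ in both cases.

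The only genuinely delicate step is verifying that the stopping time $T$ of $(\mathcal{F}_t)$ really produces a stopping time of $(\mathcal{F}_{t+g})$ suitable to feed into the class $\sum$ theorems from \cite{pat} after the shift; this is handled by the standard device used throughout the paper (since $g$ is an $(\mathcal{F}_t^g)$-stopping time and the filtration $(\mathcal{F}_{t+g})$ is defined accordingly), combined with evaluating the $\rho$-identity pathwise at $T$. The convergence of $A_{t+g}-A_g$, $N_{t+g}-N_g$, and $X_{t+g}$ is inherited from the standard class $\sum$ convergence statements in \cite{pat}, the martingale part converging by the corresponding class $\Dv$ theorem for martingales and the nondecreasing part by monotone convergence. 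I do not foresee an additional obstacle beyond bookkeeping to check that $Z$ vanishes on $H$ and that the $\rho$-image identification is correctly applied at the stopping time $T$, both of which are direct.
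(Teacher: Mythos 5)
Your proposal is correct and follows essentially the same route as the paper: apply Lemma \ref{l1} to see that $X_{\cdot+g}$ is of class $\sum$, invoke the two cases of the representation theorem of Cheridito--Nikeghbali--Platen for the shifted process, and then transfer the identity back through the map $\rho$ exactly as in the proof of Theorem \ref{t1}, using that $Y$ and $Z$ vanish on $H$ and that $\Gamma=g+L$. No substantive differences to report.
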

\begin{proof}
Since $X$ vanishes on $H$, we have $X_{ g}=0$. Then, we obtain that $X_{\cdot+ g}=(N_{\cdot+ g}-N_{ g})+(A_{\cdot+ g}-A_{ g})$. From Lemma \ref{l1}, $X_{\cdot+ g}$ is a process of class $\sum$. Then, according to Theorem 3.1 of \cite{pat}, the following hold:
\begin{enumerate}
	\item If $X_{\cdot+ g}$ is of class $\Dv$, then there exist integrable random variables $X_{\infty}$, $N_{\infty}$, $A_{\infty}$ such that      $X_{t+ g}\longrightarrow X_{\infty}$, $(N_{t+ g}-N_{ g})\longrightarrow N_{\infty}$, $(A_{t+ g}-A_{ g})\longrightarrow A_{\infty}$ almost surely as well as in $L^{1}$ and for every stopping time $T$,
$$f(A_{T+ g}-A_{ g})X_{T+ g}=\E[f(A_{\infty})X_{\infty}1_{\{L\leq T\}}|\mathcal{F}_{T+g}].$$
  \item If $q:\R\longrightarrow \R-\{0\}$ is a Borel function such that $q(A_{\cdot+ g}-A_{ g})X_{\cdot+ g}$ is of class $\Dv$, then there exist random variables $X_{\infty}$, $N_{\infty}$, $A_{\infty}$ such that $X_{t+ g}\longrightarrow X_{\infty}$, $(N_{t+ g}-N_{ g})\longrightarrow N_{\infty}$, $(A_{t+ g}-A_{ g})\longrightarrow A_{\infty}$ almost everywhere on $\{L<\infty\}$ where $L=\sup\{t\geq0; X_{t+ g}=0\}$ and for all stopping time $T<\infty$,
$$f(A_{T+ g}-A_{ g})X_{T+ g}=\E[f(A_{\infty})X_{\infty}1_{\{L\leq T\}}|\mathcal{F}_{T+g}].$$
\end{enumerate}
In both cases, we obtain:
$$\rho(f(A_{\cdot+ g}-A_{ g})X_{\cdot+ g})_{T}=\rho(\E[f(A_{\infty})X_{\infty}1_{\{L\leq \cdot\}}|\mathcal{F}_{\cdot+g}])_{T}.$$
Then, we proceed in the same way as in Theorem \ref{t1} to conclude the proof.

\end{proof}

\subsection{Representation formulas of processes of class \texorpdfstring{$\sum_{s}(H)$}{sigma(s)(H)}}

We shall begin this subsection by giving the following lemma which gives a representation formula of a uniformly integrable $\Qv-$ martingale null on $H$.
\begin{lem}\label{l0}
Let $M$ be a uniformly integrable $(\Qv,(\mathcal{F}_{t}))-$ martingale which vanishes on $H$. Then, on $\{ g<\infty\}$ and for every stopping time $T$, the following holds:
\begin{equation}
	M_{T}=\E[M_{\infty}1_{\{ g\leq T\}}|\mathcal{F}_{T}].
\end{equation}
\end{lem}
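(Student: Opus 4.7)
The plan is to shift the process by $g$ and reduce to a classical optional-sampling argument for an ordinary $\Pv$-martingale. First, since $M$ is a uniformly integrable $(\Qv,(\mathcal{F}_t))$-martingale, $DM$ is a uniformly integrable $\Pv$-martingale vanishing on $H$. Because $H$ is closed (by continuity of $D$) and $g=\sup H$, on the event $\{g<\infty\}$ one has $g\in H$, so $M_g=0$.

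Next, I would invoke Theorem 4.2.1 of \cite{1} in exactly the manner used in the proof of Lemma \ref{l1}: the shifted process $\widetilde{M}_t:=M_{t+g}$ is a uniformly integrable $(\Pv,(\mathcal{F}_{t+g}))$-martingale, with $\widetilde{M}_0=M_g=0$ and $\widetilde{M}_\infty=M_\infty$. For an $(\mathcal{F}_t)$-stopping time $T$, set $T^\circ:=(T-g)\vee 0$. A routine check shows that $T^\circ$ is an $(\mathcal{F}_{t+g})$-stopping time with $T^\circ+g=T\vee g$, so optional sampling yields
$$M_{T\vee g}=\widetilde{M}_{T^\circ}=\E[M_\infty|\mathcal{F}_{T^\circ+g}]=\E[M_\infty|\mathcal{F}^{g}_{T\vee g}].$$
Since $M_g=0$, one has $M_{T\vee g}=M_T 1_{\{g\leq T\}}$. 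Multiplying the above identity by the $\mathcal{F}^{g}_{T\vee g}$-measurable indicator $1_{\{g\leq T\}}$ then gives
$$M_T 1_{\{g\leq T\}}=\E[M_\infty 1_{\{g\leq T\}}|\mathcal{F}^{g}_{T\vee g}].$$

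To conclude, I would project this onto $\mathcal{F}_T\subseteq\mathcal{F}^{g}_{T\vee g}$ via the tower property, combined with adaptedness of $M$. The hardest step is exactly this projection: I expect to have to exploit both the $\mathcal{F}_T$-measurability of $M_T$ and the identity $\E[M_\infty|\mathcal{F}^{g}_g]=\widetilde{M}_0=0$, which in turn implies $\E[M_\infty 1_{\{g>T\}}|\mathcal{F}_T]=0$ (because on $\{g>T\}$ every $B\in\mathcal{F}_T$ lies in $\mathcal{F}^{g}_g$). Putting these ingredients together should collapse the projection to the announced identity $M_T=\E[M_\infty 1_{\{g\leq T\}}|\mathcal{F}_T]$ on $\{g<\infty\}$.
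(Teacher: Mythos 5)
Your first half (pass to $DM$, note $M_g=0$ on $\{g<\infty\}$, shift by $g$ via the Quotient Theorem, and apply optional sampling to $\widetilde M$ at $T^{\circ}=(T-g)\vee 0$) is essentially the paper's opening, and the identity $M_{T}1_{\{g\leq T\}}=\E[M_{\infty}1_{\{g\leq T\}}\,|\,\mathcal F^{g}_{T\vee g}]$ is fine. The gap is in the concluding projection. When you condition down to $\mathcal F_{T}$, the left-hand side becomes $\E[M_{T}1_{\{g\leq T\}}\,|\,\mathcal F_{T}]=M_{T}\,\Pv[g\leq T\,|\,\mathcal F_{T}]$, because $1_{\{g\leq T\}}$ is $\mathcal F^{g}_{T\vee g}$-measurable but \emph{not} $\mathcal F_{T}$-measurable and cannot be pulled out. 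To reach the announced formula you would need $M_{T}\,\Pv[g>T\,|\,\mathcal F_{T}]=0$ almost surely, and none of your ingredients gives this; it is false in general (at a deterministic time $t$ one typically has $M_{t}\neq 0$ while future zeros of $D$ still have positive conditional probability). Worse, your auxiliary identity $\E[M_{\infty}1_{\{g>T\}}\,|\,\mathcal F_{T}]=0$, if combined with the target formula, would force $M_{T}=\E[M_{\infty}\,|\,\mathcal F_{T}]$, i.e.\ that $M$ is an ordinary $\Pv$-martingale --- which a $(\Qv,\Pv)$-martingale is not in general (only $DM$ is). So the ``collapse'' you hope for cannot happen; the two ingredients you list are mutually inconsistent with the statement rather than sufficient for it.

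The paper closes the argument differently, and this is the missing idea: it never conditions back down to $\mathcal F_{T}$ by the tower property. Instead it introduces $Y_{t}=\E[M_{\infty}1_{\{g\leq t\}}\,|\,\mathcal F_{t}]$, observes (Az\'ema--Yor, Proposition 3.1 of \cite{1}) that $Y$ vanishes on $H$ and satisfies $Y_{g+t}=\E[M_{\infty}\,|\,\mathcal F_{g+t}]=M_{g+t}$, and then invokes the uniqueness in Proposition \ref{14}: $M$ and $Y$ are both optional processes vanishing on $H$ with the same $g$-shift, hence $M=\rho(M_{\cdot+g})=\rho(\E[M_{\infty}\,|\,\mathcal F_{\cdot+g}])=Y$. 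That identification of $\E[M_{\infty}1_{\{g\leq t\}}\,|\,\mathcal F_{t}]$ as \emph{the} $H$-vanishing process with prescribed shift is the substantive input your attempt lacks, and it is precisely what replaces the invalid tower-property step. (A secondary caveat, which the paper itself glosses over: the Quotient Theorem makes $\widetilde M$ a uniformly integrable martingale under $\Pv'=\tfrac{|D_{\infty}|}{\E|D_{\infty}|}\Pv$, not under $\Pv$, so your optional-sampling identities and the claim $\E[M_{\infty}\,|\,\mathcal F^{g}_{g}]=0$ would in any case need to be stated under the correct measure.)
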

\begin{proof}
By definition, $DM$ is a uniformly integrable $\Pv-$ martingale with respect to $(\mathcal{F}_{t})_{t\geq0}$ and vanishes on $H$. Hence, proceeding in the same way as in the proof of Theorem \ref{qt}, we get that $M_{\cdot+ g}$ is a uniformly integrable  martingale with respect to $(\mathcal{F}_{t+g})_{t\geq0}$. Then, for $M_{\infty}=\lim_{t\to\infty}{M_{t+ g}}$, we have:
$$M_{T+ g}=\E[M_{\infty}|\mathcal{F}_{T+g}],$$
for every stopping time $T$. Now, remark that the process defined by \\$Y_{t}=\E[M_{\infty}1_{\{ g\leq t\}}|\mathcal{F}_{t}]$ vanishes on $H$. Furthermore, for every $t\geq0$,
$$Y_{t+ g}=\E[M_{\infty}|\mathcal{F}_{t+g}].$$
Then, $\forall$ $t\geq0$,
$$Y_{t}=\rho(\E[M_{\infty}|\mathcal{F}_{\cdot+g}])_{t}.$$
Consequently, for every stopping time $T$,
$$M_{T}=\E[M_{\infty}1_{\{ g\leq T\}}|\mathcal{F}_{T}].$$
This completes the proof.

\end{proof}

Next Corollary is a direct application of Lemma \ref{l0} and Theorem 3.3 of \cite{f}.
\begin{coro}
Let $X$ be a process of $\sum_{s}(H)$ class and $f:\R_{+}\longrightarrow\R_{+}$ be a locally bounded Borel function. Then $M^{f}_{t}=f(A_{t})X_{t}-_{\Qv}\int_{0}^{t}{f(A_{s})dA_{s}}$ is a uniformly integrable $\Qv-$ martingale. Furthermore, on $\{g<\infty\}$, we have:
\begin{equation}
f(A_{T})X_{T}-_{\Qv}\int_{0}^{T}{f(A_{s})dA_{s}}=\E[M_{\infty}^{f}1_{\{ g\leq T\}}|\mathcal{F}_{T}],
\end{equation}
for every stopping time T.
\end{coro}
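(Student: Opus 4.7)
The plan is to invoke the two ingredients named in the statement and check that the hypotheses of Lemma \ref{l0} are satisfied by the candidate process $M^{f}$. Concretely, I would proceed in three short steps.

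First, I would obtain the martingale property. By Theorem 3.3 of \cite{f} (already used in the proof of Theorem \ref{0.2} with the choice $f(x)=\exp(-x)$), for any locally bounded Borel function $f$ on $\R_{+}$, the process
\[
M^{f}_{t}=f(A_{t})X_{t}-{_{\Qv}}\!\int_{0}^{t}f(A_{s})\,dA_{s}
\]
is a uniformly integrable $\Qv$-martingale with respect to $(\mathcal F_{t})_{t\geq 0}$. This is exactly the first assertion of the corollary, so no extra work is required here.

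Second, I would verify that $M^{f}$ vanishes on $H$, so that Lemma \ref{l0} applies. Because $X\in\sum_{s}(H)$, we have $X_{t}=0$ for all $t\in H$ (from the definition of the class, $N$ vanishes on $H$ and $A$ is null on $H$). Thus $f(A_{t})X_{t}=0$ on $H$. For the second term, the very definition of the stochastic integral with respect to $\Qv$ reads
\[
{_{\Qv}}\!\int_{0}^{t}f(A_{s})\,dA_{s}=\rho\!\left(\int_{0}^{\cdot}f(A_{s+\overline g})\,d\widetilde A_{s}\right)_{t},
\]
and by Proposition \ref{14} any process in the image of $\rho$ vanishes on $H$. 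Hence $M^{f}$ vanishes on $H$.

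Third, I would apply Lemma \ref{l0} directly to the uniformly integrable $\Qv$-martingale $M^{f}$, which is null on $H$. On $\{g<\infty\}$, the lemma yields, for every stopping time $T$,
\[
M^{f}_{T}=\E\!\left[M^{f}_{\infty}\mathbf 1_{\{g\leq T\}}\,\big|\,\mathcal F_{T}\right],
\]
which is precisely the claimed identity once we unfold the definition of $M^{f}_{T}$. There is no real obstacle; the only point that requires any care is checking that both terms defining $M^{f}$ vanish on $H$, and that reduces to the observation that the $\Qv$-stochastic integral is always an output of the map $\rho$ and therefore automatically null on $H$.
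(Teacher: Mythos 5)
Your proof is correct and follows exactly the route the paper intends: the paper gives no written proof but states the corollary is a direct application of Theorem 3.3 of \cite{f} (for the $\Qv$-martingale property of $M^{f}$) and Lemma \ref{l0}, which is precisely your argument. Your added verification that $M^{f}$ vanishes on $H$ (via $X=0$ on $H$ and the fact that the $\Qv$-integral is an output of $\rho$, hence null on $H$ by Proposition \ref{14}) is the one detail the paper leaves implicit, and it is handled correctly.
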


Now, we shall establish representation formulas of processes of $\sum_{s}(H)$ class. We get these results in two distinct ways. In the first one, we exploit the relationship between $\sum_{s}(H)$ class and the class $\mathcal{R_{+}}$ of \cite{1}. In the second one, we shall exploit the link with class $\sum$. Let us recall the definition of the class $\mathcal{R_{+}}$.
\begin{defn}
Let $Y$ be a non-negative process. We will say that $Y$ is a process of class $\mathcal{R_{+}}$ if:
\begin{itemize}
	\item the random set $\{t\geq0; Y_{t}=0\}$ is closed;
	\item $Y$ admits the decomposition of the form $Y=N+A$, where $N$ is a right continuous martingale which is uniformly integrable and $A$ is a continuous nondecreasing process which is adapted and integrable such that $dA_{t}$ is carried by $\{t\geq0; Y_{t}=0\}$;
	\item $\Pv[Y_{\infty}=0]=0$.
\end{itemize}
\end{defn}

Now, we give the relationship between $\sum_{s}(H)$ and $\mathcal{R_{+}}$.
\begin{lem}\label{l2}
Let $X$ be a stochastic process of $\sum_{s}(H)$ class such that $\Pv[X_{\infty}=0]=0$. Then, the stochastic process $X_{\cdot+ g}$ is an element of class $\mathcal{R}_{+}$.
\end{lem}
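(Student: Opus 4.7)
The plan is to check the three defining conditions of the class $\mathcal{R}_+$ one by one for the shifted process $Y_t := X_{t+g}$, using the decomposition $X = N + A$ supplied by the definition of $\sum_s(H)$.

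First, under the standing assumption $g < \infty$ and the continuity of $D$, one has $D_g = 0$, i.e.\ $g \in H$; since $N$ vanishes on $H$ and $A$ is null on $H$, this gives $N_g = 0 = A_g$, so the candidate decomposition $Y_t = N_{t+g} + A_{t+g}$ starts from $0$. The central step is to upgrade $N_{\cdot+g}$ to a uniformly integrable $\Pv$-martingale with respect to $(\mathcal{F}_{t+g})_{t\geq 0}$. I would do this by mirroring the Azéma--Yor Quotient Theorem argument used in the proof of Theorem~\ref{qt}: $DN$ is a uniformly integrable $\Pv$-martingale vanishing on $H$ (because $N$ is a uniformly integrable $\Qv$-martingale vanishing on $H$, and $D = 0$ on $H$), and applying the Quotient Theorem between $\Pv$ and $\Pv' := |D_\infty|\Pv/\E|D_\infty|$, together with the observation that $\mathrm{sgn}(D_{t+g})$ is constant in $t > 0$, delivers the required uniformly integrable $(\mathcal{F}_{t+g})$-martingale property of $N_{\cdot+g}$.

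The remaining requirements of $\mathcal{R}_+$ are essentially bookkeeping. Continuity and monotonicity of $A_{\cdot+g} = \widetilde A$ are built into the definition of $\sum_s(H)$, and the support condition that $d\widetilde A_t$ is carried by $\{t : \widetilde X_t = 0\}$ translates directly into $dA_{t+g}$ being carried by $\{t : Y_t = 0\}$. Integrability of $A_\infty$ comes from $A_\infty = X_\infty - N_\infty$ with $N_\infty \in L^1$ (the uniformly integrable limit) and $A$ non-decreasing and bounded above by $X_\infty$. The property $\Pv[Y_\infty = 0] = \Pv[X_\infty = 0] = 0$ is precisely the standing hypothesis of the lemma, and the closedness of $\{t \geq 0 : Y_t = 0\}$ follows from continuity of $Y$ on $(0,\infty)$ (inherited from continuity of $\widetilde A$ and, implicitly, of the martingale part) together with $Y_0 = 0$.

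The only genuinely non-trivial part of the argument is the first one, namely transferring the $\Qv$-martingale property of $N$ to a bona fide $\Pv$-martingale property of $N_{\cdot+g}$ in the shifted filtration via the Quotient Theorem; once this is in place, the other three items are a matter of matching the defining conditions of $\sum_s(H)$ and $\mathcal{R}_+$ termwise.
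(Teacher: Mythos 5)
Your proposal is correct and follows essentially the same route as the paper: both arguments hinge on the Quotient Theorem of Az\'ema--Yor to upgrade $N_{\cdot+g}$ to a uniformly integrable $\Pv$-martingale for the shifted filtration $(\mathcal{F}_{t+g})_{t\geq0}$, and then match the remaining conditions of $\mathcal{R}_{+}$ against the defining properties of $\sum_{s}(H)$ termwise. You are in fact more explicit than the paper (which does not spell out the $\Pv'$ construction, the closedness of the zero set, or the integrability of $A$), but the substance is identical.
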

\begin{proof}
Let $X=N+A$ be a stochastic process of $\sum_{s}(H)$ class. Since $N$ is a right continuous, uniformly integrable $(\Qv, (\mathcal{F}_{t})_{t\geq0})-$ martingale. It follows by Quotient Theorem that $N_{\cdot+ g}$ is a right continuous, uniformly integrable $\Pv-$ martingale with respect to the filtration $(\mathcal{F}_{t+g})_{t\geq0}$. Furthermore, by definition, $A_{\cdot+ g}$ is a continuous nondecreasing process such that $dA_{t+ g}$ is carried by $\{t\geq0; X_{t+ g}=0\}$. This achieves the proof.

\end{proof}

The following theorem gives one of the main results of this subsection.
\begin{theorem}
Let $X$ be an integrable stochastic process of $\sum_{s}(H)$ class such that $\Pv[X_{\infty}=0]=0$. Let $\Gamma=\sup\{t\geq g; X_{t}=0\}$. Then, for every stopping time $T$,
\begin{equation}
	X_{T}=\E[X_{\infty}1_{\{\Gamma\leq T\}}|\mathcal{F}_{T}].
\end{equation}
\end{theorem}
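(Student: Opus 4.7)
The natural strategy is to transfer the problem to the class $\mathcal{R}_{+}$ via shifting by $g$, apply a known representation formula there, and then push the identity back to the original filtration using the operator $\rho$ from Proposition \ref{14}. This is exactly the recipe used in the proof of Theorem \ref{t1}, only with the $\sum$-class input replaced by an $\mathcal{R}_{+}$-class input.

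First, I would invoke Lemma \ref{l2}: since $X$ is in $\sum_{s}(H)$ and $\Pv[X_{\infty}=0]=0$, the shifted process $X_{\cdot+g}$ belongs to the class $\mathcal{R}_{+}$ with respect to the filtration $(\mathcal{F}_{t+g})_{t\geq0}$. The classical representation theorem for class $\mathcal{R}_{+}$ (Azema--Yor, see \cite{1}, which is also the basis used implicitly in \cite{pat} for the $\sum$-class) then gives, for every $(\mathcal{F}_{t+g})$-stopping time $T$,
\begin{equation*}
X_{T+g}=\E\bigl[X_{\infty}\,1_{\{L\leq T\}}\,\big|\,\mathcal{F}_{T+g}\bigr],
\end{equation*}
where $L=\sup\{t\geq 0;\,X_{t+g}=0\}$ and $X_{\infty}=\lim_{t\to\infty}X_{t+g}$.

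Second, I would transfer this identity back to the original filtration via $\rho$. Set
\begin{equation*}
Z_{t}=\E\bigl[X_{\infty}\,1_{\{g+L\leq t\}}\,\big|\,\mathcal{F}_{t}\bigr].
\end{equation*}
Since $X_{\infty}1_{\{g+L\leq t\}}=0$ on $H$ (because $g+L\geq g\geq t$ when $t\in H$, using $\overline{g}=g$ under the standing assumption $g<\infty$), the process $Z$ vanishes on $H$; and one checks directly that $Z_{t+g}=\E[X_{\infty}1_{\{L\leq t\}}\mid\mathcal{F}_{t+g}]$. By Proposition \ref{14}, $Z$ is the unique $(\mathcal{F}_{t})$-optional process which vanishes on $H$ and satisfies this shifted identity, hence $Z=\rho\bigl(\E[X_{\infty}1_{\{L\leq\cdot\}}\mid\mathcal{F}_{\cdot+g}]\bigr)$. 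Applying $\rho$ to both sides of the preceding representation and using that $X$ vanishes on $H$ (so $X=\rho(X_{\cdot+g})$), we conclude
\begin{equation*}
X_{T}=\E\bigl[X_{\infty}\,1_{\{g+L\leq T\}}\,\big|\,\mathcal{F}_{T}\bigr]
\end{equation*}
for every $(\mathcal{F}_{t})$-stopping time $T$. Finally, the identity $\Gamma=g+L$ recalled in the remark preceding this subsection converts this into the announced formula.

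The routine aspects are the shift and the $\rho$-transfer, which mirror the argument in Theorem \ref{t1}. The delicate point I expect is the stopping-time bookkeeping when applying the $\mathcal{R}_{+}$-representation to an $(\mathcal{F}_{t})$-stopping time $T$: strictly speaking the formula from \cite{1} is stated for $(\mathcal{F}_{t+g})$-stopping times, so to justify evaluating it at $T$ one must either argue via the usual passage through the family of $(\mathcal{F}_{t+g})$-stopping times of the form $(T-g)^{+}$, or else appeal to the uniqueness part of Proposition \ref{14} to identify the two optional processes on the whole line $t\geq 0$ and then specialise at $T$. The latter is cleaner and is the route I would take.
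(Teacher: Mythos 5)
Your proposal is correct and follows essentially the same route as the paper's own proof: invoke Lemma \ref{l2} to place $X_{\cdot+g}$ in the class $\mathcal{R}_{+}$, apply the Az\'ema--Yor representation there, and transfer the identity back through $\rho$ by matching the shifted process $Z_{t}=\E[X_{\infty}1_{\{g+L\leq t\}}|\mathcal{F}_{t}]$ against $X$ via vanishing on $H$ and uniqueness. Your closing remark on the stopping-time bookkeeping (identifying the two optional processes for all $t$ and then specialising at $T$) is a sound way to handle a point the paper passes over silently, but it does not change the argument.
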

\begin{proof}
Lemma \ref{l2} allows to claim that $X_{\cdot+ g}$ is a stochastic process of class $\mathcal{R}_{+}$ with respect to the filtration $(\mathcal{F}_{t+g})_{t\geq0}$. Hence, for every stopping time $T$, one has:
$$X_{T+ g}=\E[X_{\infty}1_{\{L\leq T\}}|\mathcal{F}_{T+g}].$$
This implies that
$$\rho(X_{\cdot+ g})_{T}=\rho(\E[X_{\infty}1_{\{L\leq \cdot\}}|\mathcal{F}_{\cdot+g}])_{T}.$$
Since $X$ vanishes on $H$, it follows that,
$$X_{T}=\rho(\E[X_{\infty}1_{\{L\leq \cdot\}}|\mathcal{F}_{\cdot+g}])_{T}.$$
Now, let $Y_{t}=\E[X_{\infty}1_{\{ g+L\leq t\}}|\mathcal{F}_{t}]$. Remark that $Y$ vanishes on $H$. Furthermore, one has
$$Y_{t+ g}=\E[X_{\infty}1_{\{L+ g\leq t+ g\}}|\mathcal{F}_{t+g}]$$
$$\hspace{0.25cm}=\E[X_{\infty}1_{\{L\leq t\}}|\mathcal{F}_{t+g}].$$
Then, 
$$Y_{t}=\rho(\E[X_{\infty}1_{\{L\leq \cdot\}}|\mathcal{F}_{\cdot+g}])_{t}.$$
Consequently, $X\equiv Y$ since those are càdlàg processes. Therefore, for every stopping time $T$,
$$X_{T}=\E[X_{\infty}1_{\{ g+L\leq T\}}|\mathcal{F}_{T}].$$

\end{proof}

Now, we shall exploit a relationship between  the $\sum_{s}(H)$ class and the class $\sum$ to prove the last main results of this section.  
\begin{theorem}\label{com}
Let $X$ be a process of $\sum_{s}(H)$ class and $f:\R\longrightarrow\R$ be a Borel function. Let \\$\Gamma=\sup{\{t\geq0; X_{t}=0\}}$. Then, the following assertions hold:
\begin{enumerate}
	\item If $\widetilde{X}$ is of class $\Dv$, then there exist integrable random variables $X_{\infty}$, $N_{\infty}$, $A_{\infty}$ such that $\widetilde{X}_{t}\longrightarrow X_{\infty}$, $\widetilde{N}_{t}\longrightarrow N_{\infty}$, $\widetilde{A}_{t}\longrightarrow A_{\infty}$ as $t\rightarrow\infty$ almost surely as well as in $L^{1}$ and for all stopping time $T$
	\begin{equation}
	f(A_{T})X_{T}=\E[f(A_{\infty})X_{\infty}1_{\{\Gamma\leq T\}}|\mathcal{F}_{T}].
	\end{equation}
	\item If $q:\R\longrightarrow\R-\{0\}$ is a Borel function such that $q(\widetilde{A})\widetilde{X}$ is of class $\Dv$, then there exist random variables $X_{\infty}$, $N_{\infty}$, $A_{\infty}$ such that $\widetilde{X}_{t}\longrightarrow X_{\infty}$, $\widetilde{N}_{t}\longrightarrow N_{\infty}$, $\widetilde{A}_{t}\longrightarrow A_{\infty}$ as $t\rightarrow\infty$ almost everywhere on $\{L< \infty\}$ and
	\begin{equation}
	f(A_{T})X_{T}=\E[f(A_{\infty})X_{\infty}1_{\{\Gamma\leq T\}}|\mathcal{F}_{T}];
	\end{equation}
	for all stopping time $T<\infty$.
\end{enumerate}
In particular, in both cases one has
\begin{equation}
X_{T}=\E[X_{\infty}1_{\{\Gamma\leq T\}}|\mathcal{F}_{T}].
\end{equation}
\end{theorem}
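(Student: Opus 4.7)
The plan is to reduce both cases to the corresponding submartingale-of-class-$\sum$ result of \cite{pat} by first shifting by $\overline{g}=g$ and then projecting back via $\rho$, in the same spirit as the proof of Theorem \ref{t1}.

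First, I note that $\widetilde{X}=\widetilde{N}+\widetilde{A}$ is a submartingale of class $\sum$ with respect to the shifted filtration $(\mathcal{F}_{t+g})_{t\geq 0}$; this is Proposition 3.2 of \cite{f}, already invoked in the proof of Theorem \ref{0.2}. In case (1), $\widetilde{X}$ is of class $\Dv$ by hypothesis; in case (2), $q(\widetilde{A})\widetilde{X}$ is of class $\Dv$. Therefore Theorem 3.1 of \cite{pat} applies to $\widetilde{X}$, producing the limits $X_\infty,N_\infty,A_\infty$ in the modes stated, and, for every stopping time $T$ (resp.\ every finite stopping time $T<\infty$) of the filtration $(\mathcal{F}_{t+g})$,
\begin{equation*}
f(\widetilde{A}_T)\widetilde{X}_T=\E\bigl[f(A_\infty)X_\infty 1_{\{L\leq T\}}\,\bigm|\,\mathcal{F}_{T+g}\bigr],
\end{equation*}
where $L=\sup\{t\geq 0;\,\widetilde{X}_t=0\}$.

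Second, I use the remark $\Gamma=g+L$ together with the $\rho$-projection trick from the proof of Theorem \ref{t1}. Set
\begin{equation*}
Y_t=f(A_t)X_t,\qquad Z_t=\E\bigl[f(A_\infty)X_\infty 1_{\{g+L\leq t\}}\,\bigm|\,\mathcal{F}_t\bigr].
\end{equation*}
Both processes vanish on $H$: the former because $X$ does by definition of $\sum_s(H)$, the latter because for $t\in H$ one has $t\leq g$, so that $\{g+L\leq t\}$ is negligible. Their shifts by $g$ satisfy $Y_{t+g}=f(\widetilde{A}_t)\widetilde{X}_t$ and $Z_{t+g}=\E[f(A_\infty)X_\infty 1_{\{L\leq t\}}|\mathcal{F}_{t+g}]$, which by the previous step coincide. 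By the uniqueness part of Proposition \ref{14}, $Y$ and $Z$ are the same $(\mathcal{F}_t)$-optional process, so $Y_T=Z_T$ for every stopping time $T$ (resp.\ every finite stopping time $T<\infty$), yielding the first identity in each case. The final assertion then follows by taking $f\equiv 1$.

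The main obstacle I anticipate is the careful bookkeeping on $H$ itself rather than merely on its complement, together with the matching of the two sides at the common filtration level; both are the same boundary issues handled in Theorem \ref{t1}, and would be resolved by appealing to the uniqueness provided by Proposition \ref{14}, which produces the unique $(\mathcal{F}_t)$-optional process vanishing on $H$ with prescribed shift. Everything else is a direct transcription of the class-$\sum$ results of \cite{pat} through the $\rho$ map.
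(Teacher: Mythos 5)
Your proposal matches the paper's own argument essentially step for step: reduce to the class $\sum$ statement via Proposition 3.2 of \cite{f}, invoke Theorem 3.1 of \cite{pat} for the shifted process $\widetilde{X}$, and then identify $f(A_{\cdot})X_{\cdot}$ with the conditional-expectation process by checking that both vanish on $H$ and have the same shift by $g$, concluding by the uniqueness of the optional process with prescribed shift (the paper cites Proposition 3.1 of \cite{1}, which is the same uniqueness you draw from Proposition \ref{14}). The argument is correct and takes the same route as the paper.
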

\begin{proof}
From Proposition 3.2 of \cite{f}, $\widetilde{X}$ is of class $\big(\sum\big)$. Then, according to Theorem 3.1 of \cite{pat}, the following hold:
\begin{enumerate}
	\item If $\widetilde{X}$ is of class $\Dv$, then there exist integrable random variables $X_{\infty}$, $N_{\infty}$, $A_{\infty}$ such that $\widetilde{X}_{t}\longrightarrow X_{\infty}$, $\widetilde{N}_{t}\longrightarrow N_{\infty}$, $\widetilde{A}_{t}\longrightarrow A_{\infty}$  as $t\rightarrow\infty$ almost surely, as well as in $L^{1}$, and
	$$f(\widetilde{A}_{T})\widetilde{X}_{T}=\E[f(A_{\infty})X_{\infty}1_{\{L\leq T\}}|\mathcal{F}_{g+T}].$$
	\item If $q:\R\longrightarrow\R-\{0\}$ is a Borel function such that $q(\widetilde{A})\widetilde{X}$ is of class $\Dv$, then there exist random variables $X_{\infty}$, $N_{\infty}$, $A_{\infty}$ such that $\widetilde{X}_{t}\longrightarrow X_{\infty}$, $\widetilde{N}_{t}\longrightarrow N_{\infty}$, $\widetilde{A}_{t}\longrightarrow A_{\infty}$  as $t\rightarrow\infty$ almost everywhere on $\{L< \infty\}$, and
	$$f(\widetilde{A}_{T})\widetilde{X}_{T}=\E[f(A_{\infty})X_{\infty}1_{\{L\leq T\}}|\mathcal{F}_{g+T}].$$
\end{enumerate}
In particular, in both cases, one has
$$\widetilde{X}_{T}=\E[X_{\infty}1_{\{L\leq T\}}|\mathcal{F}_{g+T}].$$
Now, define $Y_{t}:=\E[f(A_{\infty})X_{\infty}1_{\{L+ g\leq t\}}|\mathcal{F}_{t}]$ and $Z_{t}=\E[X_{\infty}1_{\{L+ g\leq T\}}|\mathcal{F}_{T}]$. We have for every $t\geq0$,
$$Y_{t+ g}=\E[f(A_{\infty})X_{\infty}1_{\{L\leq t\}}|\mathcal{F}_{t+g}]=f(A_{t+ g})X_{t+ g};$$
and
$$Z_{t+ g}=\E[X_{\infty}1_{\{L\leq t\}}|\mathcal{F}_{t+g}]=X_{t+ g}.$$
Furthermore, the processes $X$, $Y$ and $Z$ vanish on $H$. Then, in both cases, it follows from Proposition 3.1 of \cite{1} that:
$$f(A_{T})X_{T}=\E[f(A_{\infty})X_{\infty}1_{\{L+ g\leq T\}}|\mathcal{F}_{T}]$$
and
$$X_{T}=\E[X_{\infty}1_{\{L+ g\leq T\}}|\mathcal{F}_{T}].$$
This achieves the proof.

\end{proof}

\begin{theorem}\label{t02}
Let $X$ be a process of $\sum_{s}(H)$ class and $f:\R\longrightarrow\R$ be a Borel function. There exist random variables $X_{\infty}$, $N_{\infty}$, $A_{\infty}$ such that $\widetilde{X}_{t}\longrightarrow X_{\infty}$, $\widetilde{N}_{t}\longrightarrow N_{\infty}$, $\widetilde{A}_{t}\longrightarrow A_{\infty}$ almost everywhere on $\{L<\infty\}$ and
\begin{equation}
f(A_{T})X_{T}=\E[f(A_{\infty})X_{\infty}1_{\{\Gamma\leq T\}}|\mathcal{F}_{T}],
\end{equation}
for all stopping time $T<\infty.$ In particular,
\begin{equation}
X_{T}=\E[X_{\infty}1_{\{\Gamma\leq T\}}|\mathcal{F}_{T}],
\end{equation}
for all stopping time $T<\infty$.
\end{theorem}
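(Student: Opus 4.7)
The plan is to mirror the strategy of Theorem \ref{com}: reduce the claim about $\sum_{s}(H)$ to the analogous representation for the class $\sum$ applied to the shifted process $\widetilde{X}$, then transfer the identity back to the original filtration by means of the operator $\rho$ of Proposition \ref{14}. What streamlines the argument here, and removes the class-$\Dv$ hypothesis that was essential in Theorem \ref{com}, is that by the very definition of $\sum_{s}(H)$ the martingale part of $X$ is already uniformly integrable as a $\Qv$-martingale.

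First I would decompose $X = N + A$ as in the definition of $\sum_{s}(H)$ and invoke Proposition 3.2 of \cite{f} to identify $\widetilde{X} = \widetilde{N} + \widetilde{A}$ as a submartingale of class $\sum$ with respect to $(\mathcal{F}_{t+g})_{t\geq 0}$. Because $N$ is a uniformly integrable $\Qv$-martingale, $DN$ is a uniformly integrable $\Pv$-martingale vanishing on $H$, and the Quotient Theorem (\cite[Theorem 3.2]{1}), applied exactly as in the proof of Theorem \ref{qt} with $\Pv' = |D_{\infty}|\Pv/\E|D_{\infty}|$, yields that $\widetilde{N}$ is itself a uniformly integrable $\Pv$-martingale with respect to $(\mathcal{F}_{t+g})_{t\geq 0}$. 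This built-in uniform integrability of $\widetilde{N}$ is the new ingredient that lets me dispense with the class-$\Dv$ assumption.

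Next I would apply the variant of \cite[Theorem 3.1]{pat} valid for class-$\sum$ submartingales whose martingale part is uniformly integrable (but whose increasing part may blow up), obtaining random variables $X_{\infty}, N_{\infty}, A_{\infty}$ with $\widetilde{X}_{t} \to X_{\infty}$, $\widetilde{N}_{t} \to N_{\infty}$, $\widetilde{A}_{t} \to A_{\infty}$ almost everywhere on $\{L<\infty\}$, together with the identity
$$f(\widetilde{A}_{T})\widetilde{X}_{T} = \E[f(A_{\infty})X_{\infty}1_{\{L\leq T\}}|\mathcal{F}_{g+T}]$$
for every stopping time $T<\infty$. To come back to the original filtration I would set $Y_{t} = \E[f(A_{\infty})X_{\infty}1_{\{g+L\leq t\}}|\mathcal{F}_{t}]$, observe that both $Y$ and $f(A_{\cdot})X_{\cdot}$ vanish on $H$ (the former by construction, the latter because $X$ does), and check using the identity above that their shifts by $\overline{g}=g$ coincide; Proposition \ref{14} then forces $f(A_{t})X_{t} = Y_{t}$, which is the desired representation with $\Gamma = g+L$. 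The particular case $f\equiv 1$ follows at once.

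The main obstacle I anticipate is purely a bookkeeping one: confirming that the exact form of the representation result from \cite{pat} being used really applies under the weaker assumption that only the martingale part of the class-$\sum$ submartingale is uniformly integrable, so that the conclusion is a.e.\ convergence on $\{L<\infty\}$ rather than almost sure plus $L^{1}$ convergence. Once this is verified, the remainder of the proof is a direct transcription of the shift-and-$\rho$ argument already developed in the proofs of Theorems \ref{qt} and \ref{com}.
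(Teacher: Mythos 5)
Your proposal is correct and follows essentially the same route as the paper: reduce to the shifted process $\widetilde{X}$ of class $\sum$ via Proposition 3.2 of \cite{f}, use the Quotient Theorem to see that $\widetilde{N}$ is a uniformly integrable martingale, invoke the representation result of \cite{pat} for class-$\sum$ processes with uniformly integrable martingale part (the paper cites Corollary 3.2 of \cite{pat}, which is exactly the ``variant'' you anticipate needing), and transfer back with the $\rho$-operator as in Theorem \ref{com}. The only difference is cosmetic: the bookkeeping worry you flag is resolved by that corollary, which gives precisely the a.e.\ convergence on $\{L<\infty\}$ stated in the theorem.
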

\begin{proof}
From Proposition 3.2 of \cite{f}, $\widetilde{X}$ is of class $\big(\sum\big)$. Since $N$ is a uniformly integrable $\Qv-$ martingale, it follows from Quotient Theorem of \cite{1} that $\widetilde{N}$ is a uniformly integrable martingale. Then according to Corollary 3.2 of \cite{pat}, there exist random variables $X_{\infty}$, $N_{\infty}$, $A_{\infty}$ such that $\widetilde{X}_{t}\longrightarrow X_{\infty}$, $\widetilde{N}_{t}\longrightarrow N_{\infty}$, $\widetilde{A}_{t}\longrightarrow A_{\infty}$ almost everywhere on $\{L<\infty\}$ and
$$f(\widetilde{A}_{T})\widetilde{X}_{T}=\E[f(A_{\infty})X_{\infty}1_{\{L\leq T\}}|\mathcal{F}_{g+T}],$$ 
for all stopping time $T<\infty.$ In particular,
$$\widetilde{X}_{T}=\E[X_{\infty}1_{\{L\leq T\}}|\mathcal{F}_{g+T}]$$
for all stopping time $T<\infty$. Then we proceed in the same way as in Theorem \ref{com} to conclude.

\end{proof}
\begin{rem}
If $X$ satisfies the assumptions of Theorem \ref{t02}, then there exists a random variable $X_{\infty}$ such that $X_{t}\longrightarrow X_{\infty}$ almost everywhere on $\{L<\infty\}$, and one has
\begin{equation}
X_{t}=\E[X_{\infty}1_{\{\Gamma\leq t\}}|\mathcal{F}_{t}],
\end{equation}
for all $t\geq0$.
\end{rem}
\begin{theorem}\label{m}
Let $X$ be a process of $\sum_{s}(H)$ class and $f:\R_{+}\longrightarrow\R_{+}$ be a locally bounded Borel function such that the process $f(\widetilde{A})\widetilde{X}$ is of class $\Dv$ and $f(\widetilde{A}_{t})\widetilde{X}_{t}\longrightarrow1$ almost surely. Let $F(x)=\int_{0}^{x}{f(y)dy}$ and $\Gamma=\sup{\{t\geq g; X_{t}=0\}}$.
\begin{enumerate}
	\item If $F(\infty)<\infty$, then, $\forall t\geq0$, $A_{t}=0$ and $f(0)X_{t}=1_{H^{c}}(t)$.
	\item If $F(\infty)=\infty$, then $L<\infty$, $\widetilde{A}_{L}=A_{\infty}<\infty$ and $X_{t}\longrightarrow X_{\infty}$ almost surely for some random variable $X_{\infty}>0$. Moreover, for every stopping time $T$, one has
\end{enumerate}
\begin{equation}\label{36}
f(A_{T})X_{T}=\Pv[ \Gamma\leq T|\mathcal{F}_{T}].
\end{equation}
\end{theorem}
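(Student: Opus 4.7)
The plan is to reduce the claim to the corresponding dichotomy statement for submartingales of class $\sum$ established in \cite{pat}, and then transfer the conclusions back to $X$ via the shift by $g$, exactly in the spirit of the proofs of Theorems \ref{t1}, \ref{com} and \ref{t02}.

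First, invoking Proposition 3.2 of \cite{f}, the process $\widetilde{X}=\widetilde{N}+\widetilde{A}$ is a submartingale of class $\sum$ with respect to $(\mathcal{F}_{t+g})_{t\geq 0}$; moreover, the Quotient Theorem argument already used in the proof of Theorem \ref{qt} and Lemma \ref{l0} shows that $\widetilde{N}$ is a uniformly integrable $\Pv$-martingale with respect to $(\mathcal{F}_{t+g})_{t\geq 0}$. The hypotheses that $f(\widetilde{A})\widetilde{X}$ is of class $\Dv$ and that $f(\widetilde{A}_{t})\widetilde{X}_{t}\to 1$ almost surely are exactly those required by the $F(\infty)<\infty$ versus $F(\infty)=\infty$ dichotomy for class $\sum$ (the companion of Corollary 3.2 of \cite{pat} already invoked in Theorem \ref{com}); note that, once $f(\widetilde{A}_{\infty})\widetilde{X}_{\infty}=1$ almost surely, the general representation for class $\sum$ collapses into a pure conditional probability.

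Applying that dichotomy to $\widetilde{X}$ then produces two alternatives. In Case 1 ($F(\infty)<\infty$) it yields $\widetilde{A}\equiv 0$ and $f(0)\widetilde{X}_{t}=1$ for every $t\geq 0$. Since $A$ is null on $H$ by the definition of $\sum_{s}(H)$ and $\widetilde{A}_{t}=A_{g+t}=0$ for all $t$, one concludes $A\equiv 0$. Likewise $f(0)X_{g+t}=1$ for every $t\geq 0$, and combining this with the fact that $X$ vanishes on $H$ together with the uniqueness part of Proposition \ref{14} (applied to the $(\mathcal{F}_{g+t})$-optional process $V_{t}=f(0)\widetilde{X}_{t}$) forces $f(0)X_{t}=1_{H^{c}}(t)$. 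In Case 2 ($F(\infty)=\infty$) the dichotomy gives $L<\infty$, $\widetilde{A}_{L}=A_{\infty}<\infty$, $\widetilde{X}_{t}\to X_{\infty}$ almost surely for some $X_{\infty}>0$, together with
$$f(\widetilde{A}_{T})\widetilde{X}_{T}=\Pv[L\leq T\mid \mathcal{F}_{g+T}]$$
for every stopping time $T$.

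Finally, to obtain \eqref{36}, I set $Y_{t}=f(A_{t})X_{t}$ and $Z_{t}=\Pv[\Gamma\leq t\mid\mathcal{F}_{t}]$ with $\Gamma=g+L$ as in the Remark opening this section. Both processes are $(\mathcal{F}_{t})$-optional and vanish on $H$ (for $Y$ because $X$ does; for $Z$ by the argument already used at the end of the proofs of Theorems \ref{t1} and \ref{com}). Since $\widetilde{A}_{t}=A_{g+t}$ and $L\leq t \Leftrightarrow g+L\leq g+t$, the Case 2 identity translates into $Y_{t+g}=Z_{t+g}$ for every $t\geq 0$. The uniqueness part of Proposition \ref{14} then forces $Y\equiv Z$ on $[0,\infty[$, which gives \eqref{36} at every stopping time $T$. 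The main obstacle is the careful verification of the shift identifications and of the applicability of the class $\sum$ dichotomy to $\widetilde{X}$ via the Quotient Theorem; beyond that, no idea new to this section is needed.
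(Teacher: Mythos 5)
Your proposal is correct and follows essentially the same route as the paper: reduce to the class $\sum$ dichotomy for $\widetilde{X}$ via Proposition 3.2 of \cite{f} (the paper cites Theorem 3.10 of \cite{pat}, i.e.\ Theorem 3.8 of \cite{pat1}, for that dichotomy), then transfer both alternatives back to $X$ using the fact that $X$, $A$ and the candidate processes all vanish on $H$, together with the uniqueness in Proposition \ref{14}. The only cosmetic difference is that you make the Quotient Theorem step for $\widetilde{N}$ explicit, which the paper leaves implicit here.
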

\begin{proof}
From Proposition 3.2 of \cite{f}, it follows that $\widetilde{X}$ is a process of class $\sum$. Then thanks to Theorem 3.10 of \cite{pat} ( or Theorem 3.8 of \cite{pat1}), we have
\begin{itemize}
	\item If $F(\infty)<\infty$, then $\widetilde{A}_{t}=0$ and $f(0)\widetilde{X}_{t}=1$ for all $t\geq0$.
	\item If $F(\infty)=\infty$, then $L<\infty$, $\widetilde{A}_{L}=A_{\infty}<\infty$ and $\widetilde{X}_{t}\longrightarrow X_{\infty}$ almost surely for a random variable $X_{\infty}>0$. Moreover, for every stopping time $T$ one has
	$$f(\widetilde{A}_{T})\widetilde{X}_{T}=\Pv[L\leq T|\mathcal{F}_{g+T}].$$
\end{itemize}
Since $A$ and $X$ vanish on $H$, it follows that:
\begin{itemize}
	\item if $F(\infty)<\infty$, then $A_{t}=\rho(\widetilde{A}_{\cdot})_{t}=0$ and $f(0)X_{t}=1_{H^{c}}(t)$ for all $t\geq0$;
	\item if  $F(\infty)=\infty$, then $L<\infty$, $\widetilde{A}_{L}=A_{\infty}<\infty$ and $\widetilde{X}_{t}\longrightarrow X_{\infty}$ almost surely for some random variable $X_{\infty}>0$. Moreover, for every stopping time $T$ one has
	$$f(A_{T})X_{T}=\rho(\Pv[L\leq \cdot|\mathcal{F}_{g+\cdot}])_{T}.$$ 
\end{itemize}
If we take $Y_{t}=\Pv[L+ g\leq t|\mathcal{F}_{t}]$, $Y$ will vanish on $H$, and for all $t\geq0$, $Y_{t+ g}=\Pv[L\leq t|\mathcal{F}_{g+t}]$. Then, by uniqueness, 
$$f(A_{T})X_{T}=\Pv[ g+L\leq T|\mathcal{F}_{T}].$$
This completes the proof.

\end{proof}
\begin{theorem}
Let $X$ be a process of $\sum_{s}(H)$ class and $f:\R_{+}\longrightarrow\R_{+}$ be a  Borel function for which there exists an increasing sequence $(a_{n})_{n\in \mathbb{N}}$ in $(0,\infty)$ such that $f1_{[0,a_{n}]}$ is bounded for all $n\in\mathbb{N}$, and $f(x)=0$ for all $x\geq a:=\lim_{n\rightarrow\infty}{a_{n}}$. Let $F(x)=\int_{0}^{x}{f(y)dy}$ and assume that the process $f(\widetilde{A})\widetilde{X}$ is of class $\Dv$ and $f(\widetilde{A}_{t})\widetilde{X}_{t}\longrightarrow1$ almost surely. Let $\Gamma=\sup{\{t\geq g; X_{t}=0\}}$. Then,
\begin{enumerate}
	\item If $F(a)<\infty$, then $A_{t}=0$ and $f(0)X_{t}=1_{H^{c}}(t)$, $\forall t\geq0$.
	\item If $F(a)=\infty$, then $L<\infty$, $\widetilde{A}_{L}=A_{\infty}<a$ and $X_{t}\longrightarrow X_{\infty}$ almost surely for some random variable $X_{\infty}>0$. Moreover, for every stopping time $T$ one has 
\begin{equation}\label{99}
f(A_{T})X_{T}=\Pv[ \Gamma\leq T|\mathcal{F}_{T}].
\end{equation}	
\end{enumerate}
\end{theorem}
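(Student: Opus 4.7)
The strategy is to mimic precisely the argument of Theorem \ref{m}, exploiting again the bridge between $\sum_{s}(H)$ and the class $\sum$ provided by Proposition 3.2 of \cite{f}. The only genuine difference with Theorem \ref{m} is that the Borel function $f$ is now allowed to vanish past a threshold $a$ (possibly $a=\infty$), rather than being locally bounded on all of $\R_{+}$; on the class $\sum$ side, the appropriate statement is the natural analogue of Theorem 3.10 of \cite{pat} (presumably Theorem 3.11 or its companion in \cite{pat}, \cite{pat1}), with the integrability threshold stated in terms of $F(a)$ instead of $F(\infty)$.

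The plan is as follows. First, by Proposition 3.2 of \cite{f}, $\widetilde{X}=X_{\cdot+\overline{g}}$ belongs to the class $\sum$; moreover, by hypothesis, $f(\widetilde{A})\widetilde{X}$ is of class $\Dv$ and tends to $1$ almost surely. Applying the class-$\sum$ version of the statement (with the truncation sequence $(a_n)$) to $\widetilde{X}$, I obtain directly:
\begin{itemize}
\item if $F(a)<\infty$, then $\widetilde{A}_{t}=0$ and $f(0)\widetilde{X}_{t}=1$ for all $t\geq0$;
\item if $F(a)=\infty$, then $L<\infty$, $\widetilde{A}_{L}=A_{\infty}<a$, $\widetilde{X}_{t}\to X_{\infty}$ a.s. for some $X_{\infty}>0$, and for every stopping time $T$,
\[
f(\widetilde{A}_{T})\widetilde{X}_{T}=\Pv[L\leq T|\mathcal{F}_{g+T}].
\]
\end{itemize}

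Next, I would transfer these identities from $\widetilde{\,\cdot\,}$ back to the original process. Since $X$ and $A$ both vanish on $H$, one has $A_{t}=\rho(\widetilde{A}_{\cdot})_{t}$ and $X_{t}=\rho(\widetilde{X}_{\cdot})_{t}$, so the first case gives $A_{t}=0$ for all $t\geq0$ and $f(0)X_{t}=1_{H^{c}}(t)$ (using that $f(0)X_{t}=0$ on $H$ and equals $f(0)\widetilde{X}_{t-\overline{g}}=1$ off $H$). For the second case, apply $\rho$ to both sides of the conditional formula above: this yields $f(A_{T})X_{T}=\rho(\Pv[L\leq\cdot|\mathcal{F}_{g+\cdot}])_{T}$.

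To identify the right-hand side, introduce $Y_{t}:=\Pv[g+L\leq t|\mathcal{F}_{t}]$. Exactly as in the proofs of Theorem \ref{m} and Theorem \ref{t02}, $Y$ vanishes on $H$ (because $g+L\geq g\geq t$ for $t\in H$), and $Y_{t+\overline{g}}=\Pv[L\leq t|\mathcal{F}_{g+t}]$, so by uniqueness $Y=\rho(\Pv[L\leq\cdot|\mathcal{F}_{g+\cdot}])$. Invoking the remark $\Gamma=g+L$, I conclude $f(A_{T})X_{T}=\Pv[\Gamma\leq T|\mathcal{F}_{T}]$, which is \eqref{99}.

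I do not expect any significant obstacle, as every ingredient is already in place: the passage $\sum_{s}(H)\to\sum$ via Proposition 3.2 of \cite{f}, the class-$\sum$ statement in \cite{pat}, \cite{pat1}, and the $\rho$-transfer technique that has been used repeatedly in this section. The only point requiring a little care is the translation of the dichotomy $F(a)<\infty$ versus $F(a)=\infty$ from $\widetilde{A}$ to $A$, but since $\widetilde{A}_{L}=A_{\infty}$ (a consequence of $A$ vanishing on $H$ and being non-decreasing after $\overline{g}$), the two conditions coincide and the conclusions match verbatim.
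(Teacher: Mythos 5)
Your proposal is correct and follows essentially the same route as the paper: reduce to the class $\sum$ via Proposition 3.2 of \cite{f}, invoke the class-$\sum$ analogue with the truncation sequence (the paper cites Theorem 3.9 of \cite{pat1} for this), and transfer back with $\rho$ and the identification $\Gamma=g+L$ exactly as in Theorem \ref{m}. The paper's proof is just a terser version of yours, deferring the $\rho$-transfer step to ``proceed as in Theorem \ref{m}.''
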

\begin{proof}
From Proposition 3.2 of \cite{f}, it follows that $\widetilde{X}$ is a process of class $\sum$. Then thanks to Theorem 3.9 of \cite{pat1}, we have
\begin{itemize}
	\item If $F(a)<\infty$, then $\widetilde{A}_{t}=0$ and $f(0)\widetilde{X}_{t}=1$ for all $t\geq0$.
	\item If $F(a)=\infty$, then $L<\infty$, $\widetilde{A}_{L}=A_{\infty}<a$ and $\widetilde{X}_{t}\longrightarrow X_{\infty}$ almost surely for some random variable $X_{\infty}>0$. Moreover, for every stopping time $T$, one has
	$$f(\widetilde{A}_{T})\widetilde{X}_{T}=\Pv[L\leq T|\mathcal{F}_{g+T}].$$
\end{itemize}
Then, we proceed in the same way as in Theorem \ref{m} to conclude the proof.

\end{proof}

\begin{acknowledgements}
We thank the referee for the careful reading of the paper and for his valuable remarks. This work is supported by
\begin{itemize}
	\item Hassan II Academy of Sciences and Technology, project 'Mathematics and applications'.
	\item Agence Nationale des Bourses du Gabon (ANBG).
\end{itemize} 
\end{acknowledgements}

{\color{myaqua}

}}

\end{document}